\documentclass[11pt]{article}
 \usepackage[title]{appendix}
\usepackage[letterpaper, total={6in, 9in}]{geometry}
\usepackage{amstext,amsthm,amsmath,amsfonts,amssymb,amscd,latexsym,txfonts,stmaryrd,enumerate}
\usepackage[all,cmtip]{xy}
\usepackage{appendix}
 \xymatrixcolsep{8mm} 
 \usepackage{tikz}


\newcommand{\Ann}{\operatorname{Ann}}

\newcommand{\rank}{\operatorname{rank}}
\newcommand{\soc}{\operatorname{soc}}

\newcommand{\C}{{\mathbb C}}

\newcommand{\Z}{{\mathbb Z}}

\newcommand{\Q}{{\mathbb Q}}

\newcommand{\F}{{\mathbb F}}
\newcommand{\fm}{{\mathfrak m}}

\usepackage[colorlinks=true, pdfstartview=FitV, linkcolor=blue, citecolor=blue, urlcolor=blue]{hyperref}

\renewcommand{\P}{{\mathbb P}}

\theoremstyle{theorem}
\newtheorem{theorem}{Theorem}[section]
\newtheorem{proposition}[theorem]{Proposition}
\newtheorem{lemma}[theorem]{Lemma}
\newtheorem{corollary}[theorem]{Corollary}
\newtheorem{problem}[theorem]{Problem}

\newtheorem*{problem*}{Problem}
\newtheorem*{question*}{Question}
\newtheorem*{claim*}{Claim}

\newtheorem*{conjecture*}{Conjecture}
\newtheorem{conjecture}[theorem]{Conjecture}
\newtheorem*{fact*}{Fact}

\theoremstyle{definition}
\newtheorem{definition}[theorem]{Definition}

\newtheorem{construction}[theorem]{Construction}
\newtheorem{example}[theorem]{Example}
\newtheorem*{ack}{Acknowledgment}

\newtheorem{remark}[theorem]{Remark}
\newtheorem*{remark*}{Remark}
\newtheorem*{remarks*}{Remarks}

\def\ns{\footnotesize \it}
\def\cha{\mathrm{char}\,}

\newcommand\blfootnote[1]{%
  \begingroup
  \renewcommand\thefootnote{}\footnote{#1}%
  \addtocounter{footnote}{-1}%
  \endgroup
}

\setlength{\parindent}{12 pt}
\thispagestyle{empty}
	\title{Cohomological Blow Ups of Graded Artinian Gorenstein Algebras Along Surjective Maps }
	\author{
Anthony Iarrobino\\[.05in]
{\ns Department of Mathematics, Northeastern University, Boston, MA 02115,
USA.
}\\{\ns a.iarrobino@northeastern.edu}\\[.2in] Pedro Macias Marques\\[.05in]
{\ns Departamento de Matem\'{a}tica, Escola de Ci\^{e}ncias e Tecnologia, Centro de Investiga\c{c}\~{a}o}\\[-.05in]
{\ns  em Matem\'{a}tica e Aplica\c{c}\~{o}es, Instituto de Investiga\c{c}\~{a}o e Forma\c{c}\~{a}o Avan\c{c}ada,}\\[-.05in]
{\ns Universidade de \'{E}vora, Rua Rom\~{a}o Ramalho, 59, P--7000--671 \'{E}vora, Portugal}\\
{\ns pmm@uevora.pt}
\\[.08in] Chris McDaniel\\[.05in]
{\ns Department of Mathematics, Endicott College, 376 Hale St
Beverly, MA 01915, USA.}\\ {\ns cmcdanie@endicott.edu}
\\[0.2in]
Alexandra Seceleanu\\[0.05in]
{\ns Department of Mathematics, University of Nebraska-Lincoln,}\\
{\ns 203 Avery Hall,  Lincoln NE 68588, USA.} \\ {\ns aseceleanu@unl.edu}\\
[0.2 in] Junzo Watanabe\\[-0.1in]
 \\{\ns Department of Mathematics, Tokai University,}\\
 {\ns Hiratsuka, Kanagawa 259-1292 Japan}\\
{\ns watanabe.junzo@tokai-u.jp}
}

\date{}

\begin{document}
		\maketitle
	\begin{abstract}	 We introduce the cohomological blow up of a graded Artinian Gorenstein (AG) algebra along a surjective map, which we term BUG (Blow Up Gorenstein) for short. This is intended to translate to an algebraic context the cohomology ring of a blow up of a projective manifold along a projective submanifold. We show, among other things, that a BUG is a connected sum, that it is the general fiber in a flat family of algebras, and that it preserves the strong Lefschetz property.  We also show that standard graded compressed algebras are rarely BUGs, and  we classify those BUGs that are complete intersections.  We have included many examples throughout this manuscript.
	\end{abstract}
	
	\blfootnote{\noindent\textbf{Keywords}: Artinian Gorenstein, cohomology ring, blow up, cohomological blow-up algebra, connected sum, flat family, complete intersection, Hilbert function, Lefschetz property, Macaulay dual generator.}
	\blfootnote{\noindent \textbf{2020 Mathematics Subject Classification}: Primary: 13H10;  Secondary: 13E10, 13M05,14D06, 14F45, 55R10.}

	\tableofcontents
	
\section{Introduction}
Given two graded Artinian Gorenstein (AG) algebras $A$ and $T$ over a field $\F$, of socle degrees $d>k$, respectively, and a surjective algebra map between them $\pi\colon A\rightarrow T$, we construct a new graded AG algebra $\hat{A}$ of socle degree $d$ called the cohomological blow up of of $A$ along $\pi$; we shall sometimes refer to $\hat{A}$ as BUG, short for Blow Up Gorenstein.  As the name suggests, our construction is based on the blow up operation in complex geometry, and particularly its effect on the (singular) cohomology rings of the spaces involved.  The purpose of the present paper is to extend this blow up operation on cohomology rings to the more general class of graded AG algebras, to study its interactions with other familiar algebraic constructs, and to draw parallels to corresponding geometric ones.
Our cohomological blow ups are very different from the much-studied blow-up algebras (Rees algebras and related rings), which correspond to the coordinate ring (not the cohomology ring) of the blow-up variety \cite{Vas}.

Generally speaking, graded Artinian Gorenstein algebras are algebraic analogues of cohomology algebras (in even degrees) of smooth, compact, even-dimensional manifolds, e.g. complex manifolds or symplectic manifolds.  For compact complex (or symplectic) manifolds $\pi\colon Y\hookrightarrow X$ of (complex) dimensions $k<d$, respectively, the blow-up of $X$ along $\pi$ is another compact complex $d$-dimensional manifold $\tilde{X}$ obtained from $X$ by removing $Y$ and gluing in its place a codimension one submanifold $\tilde{Y}$ called the exceptional divisor, which can be realized as the projectivization of the normal bundle $\mathcal{N}_{Y/X}$.  The cohomology algebras (over $\Q$) of these spaces $Y$, $X$, $\tilde{Y}$ and $\tilde{X}$ satisfy the following three algebraic properties.  For notational convenience, let $A=H^{2\bullet}(X)$, $T=H^{2\bullet}(Y)$, $\tilde{A}=H^{2\bullet}(\tilde{X})$, and $\tilde{T}=H^{2\bullet}(\tilde{Y})$, graded so that $A_i=H^{2i}(X)$, and so on.

First if $\beta\colon \tilde{X}\rightarrow X$ is the blow down map, $\beta_0\colon \tilde{Y}\rightarrow Y$ its restriction to the exceptional divisor, and $\hat{\pi}\colon \tilde{Y}\hookrightarrow \tilde{X}$ the natural inclusion map, then the obvious commutative diagram of spaces induces a commutative diagram of cohomology algebras:
\begin{align}
\label{eq:cMaps}
\xymatrix{X & \tilde{X}\ar[l]_-{\beta}\\
Y\ar[u]^-{\pi} & \tilde{Y}\ar[u]_-{\tilde{\pi}}\ar[l]^-{\beta_0}\\} & \hspace{3cm} \Rightarrow &  \xymatrix{A\ar[r]^-{\beta^*}\ar[d]_-{\pi^*} & \tilde{A}\ar[d]^-{\hat{\pi}^*}\\
	T\ar[r]_-{\beta_0^*} & \tilde{T}\\}
\end{align}

Second, as a projective space bundle over $Y$, the cohomology algebra of the exceptional divisor $\tilde{Y}$ is a free extension over the cohomology of $Y$ generated by the Euler class $\xi=e\left(\mathcal{N}_{\tilde{Y}/\tilde{X}}\right)\in H^2(\tilde{Y})$ of the normal bundle $\mathcal{N}_{\tilde{Y}/\tilde{X}}$, in symbols
\begin{equation}
\label{eq:HYtild}
\tilde{T}\cong \frac{T[\xi]}{\left(\xi^n+t_\xi^{n-1}+\cdots+t_{n-1}\xi+t_n\right)}
\end{equation} 
where $t_i=(-1)^i\cdot c_i(Y)\in T_i$ are the Chern classes of the normal bundle $\mathcal{N}_{Y/X}$, the top one of which is the Euler class $c_n(Y)=e(\mathcal{N}_{Y/X})\in T_n$, ($n=d-k=$ codimension of $Y$ in $X$). 

Third, the induced map $\beta^*$ is injective and fits into a short exact sequence of $A$-modules: 
\begin{equation}
\label{eq:SES}
\xymatrix{0\ar[r] & A\ar[r]^-{\beta^*} & \tilde{A}\ar[r]^-{\tilde{\pi}^*} & \tilde{T}/\beta_0^*\left(T\right)\ar[r] & 0.}
\end{equation}

Equations \eqref{eq:HYtild} and \eqref{eq:SES} imply that the Hilbert function of the cohomology of the blow up is 
\begin{equation}
\label{eq:Hilbert}
H(\tilde{A})=H(A)+H(T)[1]+\cdots+H(T)[n-1].
\end{equation}
In fact, one can show that Equations \eqref{eq:cMaps}, \eqref{eq:HYtild}, and \eqref{eq:SES} uniquely determine the algebra structure of $\tilde{A}$.  Moreover, if the restriction map $\pi^*\colon A\rightarrow T$ is surjective, one can show that the cohomology algebra of the blow up is given by 
\begin{equation}
\label{eq:cohomologybu}
\tilde{A}=\frac{A[\xi]}{\left(\xi\cdot K, \  \xi^n+a_1\xi^{n-1}+\cdots+a_{n-1}+a_n\right)}
\end{equation} 
where $K\subset A$ is the kernel of $\pi^*$, $\pi^*(a_i)=t_i$, and $a_n=(-1)^n\cdot \tau(Y)\in A_n$ where $\tau(Y)$ is the Thom class of the normal bundle $\mathcal{N}_{Y/X}$.  
See \cite{Gitler,GH,Ha,Huybrechts,Keel,McDuff} for further details on geometric and topological aspects of blow ups, specifically \cite[Proposition 6.4]{GH}, \cite[Proposition 2.4]{McDuff}, and \cite[Theorem 3.11]{Gitler}.  See also \cite{MS} for further details on characteristic classes.

The novelty of this paper is to show that Equations \eqref{eq:cMaps}, \eqref{eq:HYtild}, \eqref{eq:SES}, and \eqref{eq:cohomologybu} can be extended to arbitrary AG algebras  over any field $\F$ to define a new construction on these rings called the cohomological blow up.  Specifically, given any surjective degree preserving map of graded AG algebras $\pi\colon A\rightarrow T$, we define (Definition \ref{def:blowup}) a \emph{cohomological blow up of $A$ along $\pi$} as in Equation \eqref{eq:cohomologybu}, leaving $a_1,\ldots,a_{n-1}$ as free parameters and setting $a_n$ to be the algebraic analogue of the Thom class.

The algebraic analogue of the Thom class for a map of AG algebras $\pi\colon A\rightarrow T$ is defined as a certain annihilator of the kernel of $\pi$, determined by choices of socle generators, called orientations, of $A$ and $T$ (Definition \ref{def:Thom}).  Assuming that $\pi$ is surjective, and $A=R/I$ where $I\subset R$ is a homogeneous ideal in a graded polynomial ring, an alternative characterization of the Thom class is an element $\tau\in R$ for which $T= R/(I\colon\tau)$.  In terms of Macaulay duality, if $R$ is acting on its dual divided power algebra $Q$ by contraction and if $F\in Q$ and $G\in Q$ are dual generators of $A=R/\Ann(F)$ and $T/\Ann(G)$, then $\tau\in R$ is a contraction operator satisfying $\tau\circ F=G$.  Each of these interpretations of the Thom class leads to distinct, but equivalent descriptions of the cohomological blow up.  We frame these in terms of three general constructions on AG algebras, starting from the given data consisting of two oriented AG algebras $A$ and $T$ of socle degrees $d$ and $k$ with $n=d-k\geq 2$ and a surjective degree preserving algebra map between them $\pi\colon A\rightarrow T$.

Our first construction (Construction \ref{con:hat}) fixes an indeterminate $\xi$, then chooses any homogeneous monic polynomial $f_A(\xi)\in A[\xi]$ (we use the subscript to indicate the coefficient algebra) and uses Equation \eqref{eq:cohomologybu} to construct a new algebra
$$f_A(\xi)=\xi^n+a_1\xi^{n-1}+\cdots+a_n \ \ \ \ \Rightarrow \ \ \ \ \hat{A}=\frac{A[\xi]}{\left(\xi\cdot K, \hat{f}(\xi)\right)}.$$
where $K$ is the kernel of $\pi$, and $a_i\in A_i$ are any homogenous elements.  We show that $\hat{A}$ is an AG algebra if and only if $a_n=\lambda\cdot\tau$ is a non-zero scalar multiple of the Thom class $\tau\in A_n$ of $\pi$ (Theorem \ref{thm:hatAGor}).  In this case we replace the ``hat'' with a ``tilde'' and call the resulting algebra $\tilde{A}$ the \emph{cohomological blow up} of $A$ along $\pi$ with parameters $(\pi(a_1),\ldots,\pi(a_{n-1}),\lambda)$, and we call $A$ the \emph{cohomological blow down} of $\tilde{A}$ (Definition \ref{def:blowup}).  We show that $\tilde{A}$ is essentially characterized by analogues of Equations \eqref{eq:cMaps}, \eqref{eq:HYtild} and \eqref{eq:SES} above (Theorem \ref{lem:buchar}).  We then use a standard result in commutative algebra to show that $\tilde{A}$ is isomorphic to the general fiber in a flat family of algebras (Theorem \ref{thm:flatfam}).  Using this flat family, we further show that if both $A$ and $T$ have the strong Lefschetz property (SLP) and $\F$ is infinite of characteristic either zero or sufficiently large, then $\tilde{A}$ must also have SLP (Theorem \ref{thm:SLPblowup}).  This result has parallels in complex geometry; if $Y\subset X$ are projective manifolds, then the blow up $\tilde{Y}\subset\tilde{X}$ is also projective, and projective manifolds always satisfy the hard Lefschetz theorem, see e.g. \cite{GH,Ha,Huybrechts}.  We remark that the essential result of Theorem \ref{thm:SLPblowup}, that blow ups preserve SLP, seems to be well known in various other contexts and guises, e.g. \cite{ADH,BL,Cav,Karu,McMullen,Timorin}; on the other hand, to our knowledge, the generality of Theorem \ref{thm:SLPblowup} is new, and we hope it will be a welcome reference for this useful result.  We further show that over an infinite field, if $A$ and $T$ both have the weak Lefschetz property (WLP) and the difference between their socle degrees is two, then $\tilde{A}$ also has WLP (Theorem \ref{prop:codim2}).  We give examples that show general cohomological blow ups may not preserve WLP (Example \ref{8.7ex}), and blow downs may not preserve SLP or WLP in general (Example \ref{ex:failingSLP}).  In a sequel \cite{MMSW} we plan to investigate the behavior of the related Hodge-Riemann bilinear relations (HRR) in relation with cohomological blow ups and blow downs. 

Our second construction (Construction \ref{con:hatMD}) starts with Macaulay dual generators $F\in Q_d$ and $G\in Q_k$ for $A=R/\Ann(F)$ and $T=R/\Ann(G)$, and a contraction operator $\tau\in R_n$ such that $\tau\circ F=G$.  Then one again fixes an indeterminate $\xi$ and chooses a monic homogeneous polynomial $f_R(\xi)\in R[\xi]$, and from this constructs a new dual generator $\hat{F}\in Q[\Xi]_d$ and its associated AG algebra 
$$f_R(\xi)=\xi^n+r_1\xi^{n-1}+\cdots+r_n \ \ \ \ \Rightarrow \ \ \ \ \hat{A}_{MD}=\frac{R[\xi]}{\Ann(\hat{F})}$$
where the $r_i\in R_i$ are homogeneous of degree $i$, $1\leq i\leq n$.  Applying the projection map $R[\xi]\rightarrow A[\xi]$ to $f_R(\xi)\mapsto f_A(\xi)=\overline{f_R(\xi)}$, one can also construct $\hat{A}$ from above, but in general the algebras $\hat{A}$ and $\hat{A}_{MD}$ will be non-isomorphic.  However we show that they are isomorphic exactly when they are equal to some cohomological blow up of $A$ along $\pi$, and we give necessary and sufficient conditions for this to occur (Theorem \ref{thm:BUMD1}); in this case we shall replace the ``hat'' with ``tilde'', drop the subscript, and simply write $\tilde{A}$.  We use this result to show that every cohomological blow up along a surjective map can be realized as a connected sum in the sense of \cite{IMS} (Theorem \ref{thm:BUCS}).  We show that one can also obtain the cohomological blow down as a connected sum (Theorem \ref{thm:bdcs}).  This latter result implies that every AG algebra $A$ can be realized as a connected sum (over some $\tilde{T}\neq\F$) by blowing up then blowing down.

Our third construction (Construction \ref{con:ideal}) starts with presentations $A=R/I$ and $T=R/(I:\tau)$.  One then fixes an indeterminate $\xi$ and chooses a monic homogeneous polynomial $f_R(\xi)\in R[\xi]$ and constructs a new ideal $\hat{I}\subset R[\xi]$ by
$$f_R(\xi)=\xi^n+r_1\xi^{n-1}+\cdots+r_n \ \ \ \ \Rightarrow \ \ \ \ \hat{I}=I+\xi\cdot (I:\tau)+(f_R(\xi))$$
where the $r_i\in R_i$ are homogeneous of degree $i$, $1\leq i\leq n$.  It is clear from the definition that $R[\xi]/\hat{I}\cong \hat{A}$ from Construction \ref{con:hat} above, and hence we deduce that $\hat{I}$ is a Gorenstein ideal (i.e. irreducible and primary to the maximal ideal of $R[\xi]$) if and only if $a_n=\lambda\cdot\tau$, a non-zero scalar multiple of the Thom class of $\pi$.  In this case we replace the ``hat'' with ``tilde'' and call $\tilde{I}$ a \emph{cohomological blow up ideal} because the quotient algebra $R[\xi]/\tilde{I}$ is isomorphic to the cohomological blow up algebra $\tilde{A}$ defined above.  We compute a minimal generating set of $\tilde{I}$ in terms of minimal generating sets of $I$ and $(I:\tau)$ (Theorem \ref{thm:mingens}).  Using this result in conjunction with the notion of exact pairs of zero divisors borrowed from \cite{HS, KSV}, we classify BUGs which are complete intersections (Theorem \ref{thm:CI}).  We also use cohomological blow ups to prove a special case of Watanabe's Bold Conjecture (WBC) \cite{WatanabeTalk}, namely that any standard graded Artinian complete intersection cut out by products of linear and/or quadratic forms can be embedded as a subalgebra of another standard graded Artinian complete intersection cut out by quadratic forms and of the same socle degree (Theorem \ref{thm:WBC}); see \cite{McDBC} for a different proof of a related result, and see \cite{McDSmith} for a proof of other cases of WBC.

It is interesting to note that our cohomological blow up along a surjective map preserves the standard grading, i.e. if $A$ and $T$ are standard graded, then the cohomological blow up $\tilde{A}$ is also standard graded (Remark \ref{rem:stdg}).  However we show by examples that neither the cohomological blow down, nor, what one might call, the cohomological blow up along a non-surjective map, necessarily preserve standard grading at all (Example \ref{ex:nonst}, Remark \ref{rem:nostdgrad}).  We say that a standard graded AG algebra $\tilde{A}$ has a \emph{standard BUG structure} if it is a cohomological blow up of a standard graded AG algebra $A$ along some surjective map $\pi$.  Using Equation \eqref{eq:Hilbert} above, we deduce that some Hilbert functions cannot occur with standard BUG structures, and we call such Hilbert functions \emph{inaccessible}.  Specifically we show that the compressed Hilbert functions of \cite{Iarrobino} with embedding dimension at least three and socle degree at least six are always inaccessible (Theorem \ref{thm:compressed}).  We plan for a sequel \cite{IMMSW} with further results in this direction, including a complete description of cohomological blowup algebras of codimension two - where all AG Hilbert functions are accessible - and a study of inaccessible AG Hilbert functions in codimension at least three.

This paper is organized as follows.  In Section \ref{sec:Thom} we give a description of the algebraic analogue of the Thom class of a map of graded AG algebras.  In Section \ref{sec:CBU} we give Construction \ref{con:hat} and necessary and sufficient conditions for it to yield a Gorenstein algebra.  Then we define the cohomological blow up algebra, and give a description of its Hilbert function.  In Section \ref{sec:MD} we discuss Macaulay duality, we give Construction \ref{con:hatMD}, and we give necessary and sufficient conditions for it to yield the cohomological blow up algebra.  In Section \ref{sec:connsum}, we show that every cohomological blow-up algebra is a connected sum.  We also describe the blow-down in terms of connected sums, and show that every AG algebra has a connected sum decomposition, obtained by blowing up then blowing down.  In Section~\ref{sec:Ideal}, we give Construction \ref{con:ideal}, introduce the cohomological blow ups ideal, and compute its minimal generating set.  In Section \ref{compsec} we define inaccessible Hilbert functions, and show that almost all compressed AG algebras are inaccessible.  In Section \ref{Lefsec} we show that cohomological blowing up preserves SLP. In Section \ref{geomsec} we discuss further the connection of our work with geometry and other areas, give some geometrically motivated examples, and suggest problems for future work.  In this paper, we have made a special effort to include a wide array of examples, many of which were found as counter-examples to conjectures or questions that arose in our discussions and preparations of this manuscript, and we hope the reader will find them useful.  In the appendix we have included a list of all examples and brief descriptions of them for the readers convenience.

\section{AG Algebras, Orientations, and Thom Classes}
\label{sec:Thom}
Let $\F$ be any field.  A \emph{graded AG algebra} means a commutative $\Z_{\geq 0}$-graded connected Artinian Gorenstein $\F$-algebra; in particular if $A$ is a graded AG algebra of socle degree $d$, then $A=\bigoplus_{i=0}^dA_i$ with $A_0=\F$ and $\operatorname{socle}(A)=A_d\cong \F$.  Since most algebraic objects in this paper are graded, we may sometimes drop the adjective and refer simply to an AG algebra.  Unless explicitly stated otherwise (as Remark \ref{rem:lambda}) we will not restrict the ground field $\F$ except in our study of Lefschetz properties (Section~\ref{Lefsec}). We say that $A$ is \emph{standard graded} if it is generated as an algebra by its linear forms, i.e. $\F[A_1]=A$.  Although many of our examples are standard graded, we will not assume this, except in our discussion of compressed algebras (Section \ref{compsec}).  An \emph{orientation} on the AG algebra $A$ is a choice of linear isomorphism $\int_A\colon A_d\overset{\cong}{\rightarrow} \F$ or, equivalently, a choice of socle generator, which we shall denote by $0\neq a_{soc}\in A_d$.  An \emph{oriented AG algebra} is a pair $\left(A,\int_A\right)$ consisting of a graded Artinian Gorenstein algebra $A$ and an orientation $\int_A$; when the orientation is understood we shall drop the $\int_A$ and just speak of an oriented AG algebra $A$, and its distinguished socle generator $a_{soc}$.  

Suppose that $A$ and $T$ are two oriented AG algebras of socle degrees $d$ and $k$, respectively, with $d>k$, and suppose that $\pi\colon A\rightarrow T$ is a degree-preserving algebra map between them; we may occasionally drop the adjectives and simply refer to the map $\pi$.  Since $A$ is Gorenstein, multiplication defines a perfect pairing for all $0\leq i\leq d$:
$$\xymatrixrowsep{.1pc}\xymatrix{A_i\times A_{d-i}\ar[r] & \F\\
(a,a')\ar@{=}[r] & \int_Aaa'}.$$  
In particular, for any $\phi\in \operatorname{Hom}_\F(A_i,\F)$ there exists a unique $a\in A_{d-i}$ such that $\phi(b)=\int_Aa\cdot b$ for every $b\in A_i$.  Pulling back the orientation on $T$ by $\pi$ defines a homomorphism ${\int_T}\circ {\pi}\colon A_k\rightarrow\F$ and hence, as shown in \cite[Lemma 2.1]{IMS}, there exists a unique element $\tau\in A_{d-k}$ for which 
\begin{equation}
\label{eq:Thom}
\int_T\pi(a)=\int_A\tau\cdot a, \ \ \ \forall \ a\in A.
\end{equation} 

\begin{definition}[Thom class, Euler class]
	\label{def:Thom}
	The unique element $\tau\in A_{d-k}$ defined by Equation \eqref{eq:Thom} is called the \emph{Thom class of $\pi$}.
	The image of the Thom class $\pi(\tau)\in T_{d-k}$ is called the \emph{Euler class of $\pi$}.\footnote{Topologically, if $A=H^{2\bullet}(X)$ and $T=H^{2\bullet}(Y)$ are the cohomology rings of a manifold $X$ and a submanifold $\pi\colon Y\hookrightarrow X$ and $\pi^*\colon H^{2\bullet}(X)\rightarrow H^{2\bullet}(Y)$ is the induced restriction map on cohomology algebras, then the Thom class of $\pi^*$ is exactly the Thom class of the normal bundle of $Y$ in $X$, also known as the \emph{Poincar\'e dual class of $Y\subset X$}, and its image under the restriction map is the Euler class of the normal bundle, e.g. \cite{MS}.} 
\end{definition}

Note that the Thom class is zero if and only if the socle of $T$ does not belong to the image of $\pi$.  We call $\pi\colon A\rightarrow T$ a \emph{restriction map} if its Thom class is non-zero.  In particular, if $\pi$ is surjective, then it is a restriction map, but not every restriction map is surjective.  In complex geometry, if $X$ is a K\"ahler manifold and $\pi\colon Y\subset X$ is a codimension $n$ K\"ahler submanifold, then the induced map on cohomology $\pi^*\colon H^{2\bullet}(X)\rightarrow H^{2\bullet}(Y)$ is a restriction map \cite[Exercise 3.3.9]{Huybrechts}, though it need not be surjective, e.g. Example \ref{ex:Segre1}.   Here is a  useful characterization of the Thom class of a restriction map.  

\begin{lemma}
	\label{lem:Thomdual}
	Let $\pi\colon A\rightarrow T$ be a restriction map between two oriented AG algebras of socle degrees $d>k$, respectively, and let $K=\ker(\pi)\subset A$ be its kernel.  Let $a\in A$ be any homogeneous element of degree $n=d-k$.  Then $a\cdot K=0$ if and only if $a=\lambda\cdot \tau$ is a multiple of the Thom class of $\pi$. 
\end{lemma}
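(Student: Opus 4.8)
The plan is to prove both implications using only the defining property \eqref{eq:Thom} of the Thom class together with Gorenstein duality in $A$ and in $T$.

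For the ``if'' direction it suffices to show that $\tau\cdot K=0$, since then $(\lambda\tau)\cdot K=0$ for every scalar $\lambda$. Let $b\in K$. Because $A$ is Gorenstein, $\tau b=0$ as soon as $\int_A(\tau b)\,c=0$ for every $c\in A$; and indeed $\int_A\tau(bc)=\int_T\pi(bc)=\int_T\pi(b)\pi(c)=0$, since $\pi$ is an algebra map and $\pi(b)=0$. Here the first equality is \eqref{eq:Thom} applied to the element $bc$.

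For the ``only if'' direction, fix a homogeneous $a\in A_n$ with $a\cdot K=0$ and consider the linear functional $\phi\colon A_k\to\F$, $\phi(c)=\int_A a c$, which is well defined since $ac\in A_d$. For $c\in K_k:=K\cap A_k$ we have $ac=0$, so $\phi$ vanishes on $K_k$ and therefore descends to a functional $\overline{\phi}\colon A_k/K_k\to\F$. The key point is to identify $A_k/K_k$ with $T_k$: since $\pi$ is a restriction map, $\tau\neq 0$, hence, by the characterization recalled right after Definition \ref{def:Thom}, the socle $T_k=\operatorname{socle}(T)$ lies in the image of $\pi$; as $\pi$ is degree preserving this forces $\pi(A_k)=T_k$, so $\pi$ induces an isomorphism $A_k/K_k\xrightarrow{\ \sim\ }T_k$. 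Thus $\overline{\phi}$ is a functional on the one-dimensional space $T_k$, and since the orientation $\int_T\colon T_k\to\F$ is an isomorphism there is a unique scalar $\lambda$ with $\overline{\phi}=\lambda\cdot\int_T$. Unwinding the definitions and invoking \eqref{eq:Thom} once more, for every $c\in A_k$ we obtain
\[
\int_A a c=\phi(c)=\overline{\phi}\bigl(\pi(c)\bigr)=\lambda\int_T\pi(c)=\lambda\int_A\tau c=\int_A(\lambda\tau)c,
\]
so $a-\lambda\tau$ pairs to zero against every element of $A_k$. By the perfect pairing $A_n\times A_k\to\F$ (note $n+k=d$) we conclude $a=\lambda\tau$.

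The only genuinely delicate issue is that $\pi$ need not be surjective in general; this is exactly why the hypothesis that $\pi$ is a restriction map is needed, and it enters precisely to guarantee surjectivity of $\pi$ in the top degree $k$, which is what makes $\overline{\phi}$ a functional on all of $T_k$. Everything else is routine bookkeeping with the Gorenstein pairings, so I expect no further obstacle.
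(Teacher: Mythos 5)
Your proof is correct and takes essentially the same approach as the paper: both arguments hinge on the fact that linear functionals on $A_k$ vanishing on $K_k$ form a one-dimensional space spanned by $\int_T\circ\pi$, and then compare $\int_A a\cdot(-)$ with $\int_A\tau\cdot(-)$ via the perfect pairing. You are somewhat more careful than the paper about two points it treats as obvious: you give a direct proof of the "if" direction, and you explicitly invoke the restriction-map hypothesis to justify that $\pi(A_k)=T_k$ (so $K_k$ has codimension one in $A_k$), a step the paper simply asserts when it writes the short exact sequence $0\to K_k\to A_k\to T_k\to 0$.
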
 
\begin{proof}
	Let $\int_A\colon A_d\rightarrow\F$ and $\int_T\colon T_k\rightarrow \F$ be orientations on $A$ and $T$, respectively.  Assume first that $a\in A_n$ and $a\cdot K=0$.  Consider the short exact sequence of vector spaces $0\rightarrow K_k\rightarrow A_k\rightarrow T_k\rightarrow 0$.  Since $T$ is Gorenstein of socle degree $k$, $T_k$ is one-dimensional, and hence $K_k\subset A_k$ is a codimension-one subspace.  Therefore the set of homomorphisms $\phi\in \operatorname{Hom}_{\F}(A_k,\F)$ which vanish on $K_k$ is one-dimensional.  Since $\phi_1(u)=\int_A\tau\cdot u=\int_T\pi(u)=0$ for every $u\in K$, and also $\phi_2(u)=\int_Aa\cdot u=0$ for every $u\in K$, we must have $\phi_1=\lambda\cdot\phi_2$, which implies that $\tau=\lambda\cdot a$.  The converse is clear. 
	\end{proof}
If $\pi\colon A\rightarrow T$ is surjective, which will be a standing assumption throughout this paper, then the Thom class has some nice alternative descriptions. First, let $R=\F[x_1,\ldots,x_r]$ be a graded polynomial ring with homogeneous maximal ideal $\mathfrak{m}=(x_1,\ldots,x_r)\subset R$, and let $I\subset R$ be a homogeneous $\mathfrak{m}$-primary ideal.  Then $R/I$ is graded Artinian and it is Gorenstein if and only if $I$ is irreducible, meaning that it cannot be written as an intersection of two strictly larger ideals \cite[Lemma I.1.3]{MeyerSmith}.  Suppose that $J\subset R$ is another homogeneous $\mathfrak{m}$-primary irreducible ideal with $I\subset J$, let $T=R/J$ and let $\pi\colon A\rightarrow T$ be the natural projection map.  The following fact is well known; see \cite[Lemma 4]{Watanabe} and \cite[Theorem I.2.1]{MeyerSmith}. 
\begin{lemma}
	\label{lem:transition}
	With the above assumptions,  let $\bar{\tau}\in A=R/I$ be the Thom class of $\pi$ and let $\tau\in R$ any homogeneous lift.  Then we have 
	$$J=(I\colon\tau) \ \ \text{and} \ \ (I:J)=(\tau)+I.$$
\end{lemma}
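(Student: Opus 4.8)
The plan is to prove the two ideal identities $J = (I : \tau)$ and $(I : J) = (\tau) + I$ separately, using Lemma \ref{lem:Thomdual} together with Macaulay duality to translate the annihilator conditions into the language of contraction operators acting on dual generators. First I would set up notation: let $F \in Q_d$ be a dual generator for $A = R/I$, so $I = \Ann(F)$, and let $G \in Q_k$ be a dual generator for $T = R/J$, so $J = \Ann(G)$. The key observation, recorded in the introduction, is that the Thom class lift $\tau \in R$ is characterized (up to the choice of orientations built into $F$ and $G$) by $\tau \circ F = G$; I would either cite this or quickly rederive it from Equation \eqref{eq:Thom} by noting that $\int_T \pi(a) = \int_A \tau a$ says precisely that the functional $a \mapsto (\tau \circ F)(a)$ agrees with $a \mapsto G(\pi(a))$, i.e. $\tau \circ F = G$ as elements of $Q$.

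For the first identity $J = (I : \tau)$: an element $g \in R$ lies in $(I : \tau)$ iff $g\tau \in I = \Ann(F)$ iff $(g\tau) \circ F = 0$ iff $g \circ (\tau \circ F) = 0$ iff $g \circ G = 0$ iff $g \in \Ann(G) = J$. This is the clean Macaulay-dual argument and should be essentially immediate once the dictionary $\tau \circ F = G$ is in place. Alternatively, one can argue directly at the level of AG algebras: $(I:\tau)/I$ is, by Lemma \ref{lem:Thomdual} applied in each degree, controlled by the annihilator of $\bar\tau$, and one identifies $R/(I:\tau)$ with $T$ by checking Hilbert functions and the surjection — but the dual-generator route is cleaner and I would lead with it.

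For the second identity $(I : J) = (\tau) + I$: the inclusion $\supseteq$ is the easy direction — we need $\tau J \subseteq I$, equivalently $(hj)\circ F = 0$ for all $j \in J$, i.e. $j \circ (\tau \circ F) = j \circ G = 0$, which holds since $j \in J = \Ann(G)$; and trivially $I \subseteq (I:J)$. For the reverse inclusion $\subseteq$, suppose $h \in (I:J)$, so $hJ \subseteq I$. Passing to $A = R/I$ and writing $\bar h$ for the image, this says $\bar h \cdot \bar J = 0$ in $A$, where $\bar J = J/I = \ker(\pi)= K$. Now here is where I would invoke Lemma \ref{lem:Thomdual}: that lemma as stated handles homogeneous elements of degree exactly $n = d - k$ annihilating $K$, concluding they are scalar multiples of $\bar\tau$. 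I would first reduce to the homogeneous case (the ideals $I$, $J$, $(\tau)+I$ are all homogeneous, so it suffices to treat a homogeneous $h$ of some degree $e$), then handle the degree bookkeeping: if $e < n$ then $\bar h K = 0$ forces $\bar h = 0$ hence $h \in I$ (because $K$ contains elements that, multiplied into low degrees, detect everything — more precisely $K_k$ has codimension one in $A_k$, and if $\bar h \ne 0$ in degree $e < n$ then by the perfect pairing on $A$ there is something in $A_{d-e} \supseteq$ a suitable multiple landing in $A_k$ not killed, contradicting $\bar h K = 0$ unless $A_{d-e}\bar h$ is forced into the socle complement... this needs a short argument); if $e = n$, Lemma \ref{lem:Thomdual} gives $\bar h = \lambda \bar\tau$ directly, so $h \in (\tau) + I$; if $e > n$, then $\deg \bar h > n$ means... one argues $\bar h \in (\bar\tau)$ as well, perhaps by a downward induction or by noting $h \in \mathfrak{m}^{e-n}\tau + I$ after peeling off.

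The main obstacle I anticipate is exactly this last degree analysis in the $\subseteq$ direction of the second identity: Lemma \ref{lem:Thomdual} is stated only for degree-$n$ elements, so one must separately argue that $(I:J)$ has no components in degrees below $n$ beyond $I$ itself, and that in degrees above $n$ every element of $(I:J)$ is captured by $(\tau)+I$. Cleanest fix: prove $(I:J) = (\tau)+I$ by a dimension count. We have the exact sequence $0 \to (I:J)/I \to A \to \Hom_\F(J/I, A) $ — no, better: $(I:J)/I = \Ann_A(K)$ where $K = J/I$, and since $A$ is Gorenstein, $\Ann_A(K) \cong \Hom_\F(A/K, \F)$ up to a degree shift by $d$, so $\dim_\F \Ann_A(K) = \dim_\F(A/K) = \dim_\F T = \dim_\F A/(I:J)$... this gives $\dim \Ann_A K$ in each degree. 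Meanwhile $((\tau)+I)/I = \bar\tau A \cong A/\Ann_A(\bar\tau)$, and $\Ann_A(\bar\tau) \supseteq K$ with equality forced by Lemma \ref{lem:Thomdual} plus Gorenstein duality (this is essentially Lemma \ref{lem:transition}'s first identity re-read: $\Ann_A(\bar\tau) = J/I = K$ since $(I:\tau)=J$). Hence $\bar\tau A \cong A/K = T$, which has the same dimension in each degree as $\Ann_A(K) = (I:J)/I$. Since $(\tau)+I \subseteq (I:J)$ (the easy inclusion shown above) and the quotients by $I$ have equal finite dimension in every degree, they are equal. I would write the proof in this order: establish $\tau \circ F = G$; deduce $J = (I:\tau)$ and hence $\Ann_A(\bar\tau) = K$; show $(\tau)+I \subseteq (I:J)$; compute $\dim_\F \big((\tau)+I/I\big)_j = \dim_\F T_j = \dim_\F \big((I:J)/I\big)_j$ via Gorenstein duality; conclude equality.
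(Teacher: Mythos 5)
The paper does not actually prove Lemma~\ref{lem:transition}; it cites it as a well-known fact from \cite[Lemma 4]{Watanabe} and \cite[Theorem I.2.1]{MeyerSmith}. So your proof supplies something the paper omits, and the question is whether it is sound.

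It is, once you commit to the dimension-count argument that you arrive at in the final paragraph; your earlier degree-by-degree case analysis (the $e<n$, $e=n$, $e>n$ split around Lemma~\ref{lem:Thomdual}) is, as you yourself flag, incomplete and would need real work to close, and I would cut it entirely. The clean version you converge on is: (i) derive $\tau\circ F=G$ directly from Equation~\eqref{eq:Thom} --- this is exactly the computation the paper records as Lemma~\ref{lem:MDThom}, whose proof does not depend on Lemma~\ref{lem:transition}, so there is no circularity; (ii) $g\in(I\colon\tau)\iff g\tau\circ F=0\iff g\circ(\tau\circ F)=g\circ G=0\iff g\in J$, giving $J=(I\colon\tau)$ and hence $\Ann_A(\bar\tau)=K$; (iii) the easy inclusion $(\tau)+I\subseteq(I\colon J)$ since $\tau J\circ F=J\circ G=0$; (iv) both $(I\colon J)/I=\Ann_A(K)$ and $((\tau)+I)/I=\bar\tau A\cong A/\Ann_A(\bar\tau)=A/K=T$ have $\F$-dimension $\dim_\F T$ --- the first by self-injectivity of $A$ (equivalently, the perfect pairing identifies $\Ann_A(K)$ with $\Hom_\F(A/K,\F)$), the second by (ii) --- and the containment plus equal finite dimension forces equality. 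This is a standard Macaulay-duality route and is the right one.

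One small correction: in your sketch of the degree-graded dimension count you write $\dim_\F\bigl((\tau)+I/I\bigr)_j=\dim_\F T_j$; the shift is off, since $\bar\tau A\cong T(-n)$ as graded modules, so the left side is $\dim_\F T_{j-n}$, while $\dim_\F\Ann_A(K)_j=\dim_\F T_{d-j}$. These two do agree, but only because $T$ is Gorenstein of socle degree $k=d-n$, giving $\dim_\F T_{d-j}=\dim_\F T_{k-(d-j)}=\dim_\F T_{j-n}$. In any case the ungraded total-dimension count already suffices, since the ideals involved are homogeneous and you have a containment, so you could simply drop the graded refinement.
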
    
In \cite{MeyerSmith}, the authors refer to the the homogeneous lift of the Thom class $\tau\in R$ in Lemma \ref{lem:transition} as a transition element for $I\subseteq J$.

Next let $Q=\F[X_1,\ldots,X_r]$ be a divided power algebra on which $R$ acts by contraction, e.g. \cite[Appendix A]{IK} or \cite[Appendix A.2.4]{Eisenbud}: 
$$x_i\circ X_1^{a_1}\cdots X_i^{a_i}\cdots X_r^{a_r}=\begin{cases} X_1^{a_1}\cdots X_i^{a_i-1}\cdots X_r^{a_r} & \text{if} \ a_i>0\\ 0 & \text{if} \ a_i=0\\ \end{cases}.$$
Then it is well known that a homogeneous ideal $I\subset R$ is $\mathfrak{m}$-primary irreducible of socle degree $d$ (meaning the socle degree of $A=R/I$) if and only if there exists a homogeneous form $F\in Q_d$ for which $I=\Ann(F)=\{r\in R \ | \ r\circ F=0\}$.  In this case, $F$ is called a Macaulay dual generator of $A$.  Suppose that $G\in Q_k$ ($k<d$) is another homogeneous form for which $I=\Ann(F)\subset J=\Ann(G)$, let $T=R/J$ and let $\pi\colon A=R/\Ann(F)\rightarrow R/\Ann(G)$ be the natural projection map.  Note that $A$ and $T$ have natural orientations coming from $F$ and $G$:  $\int_Aa = (a\circ F)(0)$ and $\int_Tt=(t\circ G)(0)$.  
\begin{lemma}
\label{lem:MDThom}
Under the assumptions of Lemma \ref{lem:transition}, if $\tau\in R$ is any homogeneous lift of the Thom class of $\pi$, then   
$$\tau\circ F=G.$$
\end{lemma}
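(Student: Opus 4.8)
The plan is to work directly from the defining property of the Thom class, Equation~\eqref{eq:Thom}, together with the natural orientations on $A$ and $T$ coming from the dual generators $F$ and $G$. By Lemma~\ref{lem:transition} we know $J=\Ann(G)=(I:\tau)$, so in particular $\tau\circ F\in Q_k$ is annihilated by every element of $J$: indeed, for $s\in J$ we have $s\tau\in (I:\tau)\cdot\tau\subseteq I$ — more carefully, $s\in J=(I:\tau)$ means $s\tau\in I=\Ann(F)$, hence $s\circ(\tau\circ F)=(s\tau)\circ F=0$. Since $\Ann(G)=J$ and $G$ is a (up to scalar unique) dual generator for the one-dimensional socle of $T=R/J$ in degree $k$, any element of $Q_k$ killed by $J$ is a scalar multiple of $G$. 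Therefore $\tau\circ F=\mu\cdot G$ for some $\mu\in\F$, and the whole content of the lemma is the assertion that the orientations have been normalized so that $\mu=1$.

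The main step, then, is to pin down the scalar $\mu$ using Equation~\eqref{eq:Thom} and the stated orientations $\int_A a=(a\circ F)(0)$ and $\int_T t=(t\circ G)(0)$. First I would observe that since $\pi\colon A\to T$ is surjective, there is some $a\in A$ with $\pi(a)\neq 0$ in $T_k$; lifting to $R$, pick $\tilde a\in R_k$ whose image in $T$ is nonzero, i.e. $\tilde a\circ G\neq 0$ (a nonzero constant, since it lies in $Q_0$). Now apply \eqref{eq:Thom} to (the class of) $\tilde a$:
\begin{equation*}
\int_T \pi(\tilde a) \;=\; \int_A \bar\tau\cdot \tilde a.
\end{equation*}
The left side is $(\tilde a\circ G)(0)=\tilde a\circ G$ (a scalar). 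The right side is $\big((\tau\tilde a)\circ F\big)(0)=\big(\tilde a\circ(\tau\circ F)\big)(0)=\big(\tilde a\circ(\mu G)\big)(0)=\mu\,(\tilde a\circ G)$. Since $\tilde a\circ G\neq 0$, dividing gives $\mu=1$, which is exactly $\tau\circ F=G$.

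I expect the only real subtlety to be bookkeeping: making sure the contraction action is used consistently (associativity $r\circ(s\circ F)=(rs)\circ F$, which holds for the contraction action), that passing between $A=R/I$ and $R$ is harmless because the Thom class $\bar\tau\in A$ has $\tau\in R$ as a lift and $\tau\circ F$ only depends on $\tau$ modulo $I=\Ann(F)$, and that $(G')(0)$ for $G'\in Q_0$ is just the scalar $G'$. There is also a small point that one should confirm $\tau\circ F$ has degree exactly $k=d-n$, which follows since $\tau$ is homogeneous of degree $n$ and $F$ is homogeneous of degree $d$, and contraction lowers degree additively. None of this should present a genuine obstacle; the lemma is essentially a normalization check, and the argument above is the whole proof.
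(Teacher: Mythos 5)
Your proof is correct, but it takes a genuinely different decomposition from the paper's. The paper's argument is a single step: translating Equation~\eqref{eq:Thom} via the given orientations yields $\bigl(a\circ(G-\tau\circ F)\bigr)(0)=0$ for all $a\in R$, and the non-degeneracy of the evaluation pairing $R_k\times Q_k\to\F$ immediately forces $G-\tau\circ F=0$. You instead first invoke Lemma~\ref{lem:transition} and the structure of Macaulay duality to argue that $\tau\circ F$ lies in the space of degree-$k$ forms annihilated by $J=\Ann(G)$, which is one-dimensional and spanned by $G$ (because $J_k$ has codimension one in $R_k$, since $T_k\cong\F$); hence $\tau\circ F=\mu\cdot G$ for some $\mu\in\F$. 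You then pin down $\mu=1$ by evaluating Equation~\eqref{eq:Thom} on a single element $\tilde a\in R_k$ with $\tilde a\circ G\neq 0$. Both arguments are valid; both ultimately rest on associativity of the contraction action, $(\tau a)\circ F=a\circ(\tau\circ F)$, together with non-degeneracy of the pairing. Your two-step route is slightly longer but isolates the real content of the lemma as a normalization-of-orientations check, and it makes the role of Lemma~\ref{lem:transition} explicit rather than implicit; the paper's route is shorter and avoids any discussion of the annihilator of $J$ in $Q_k$.
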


\begin{proof}
Indeed the Thom class condition \eqref{eq:Thom} translates to the condition  
\begin{align}
\notag (a\circ G) (0)  &= (\tau\cdot a\circ F) (0)=(a\circ (\tau\circ F))(0), \ \ \forall \ \ a\in R.\\
\intertext{or, equivalently}
\label{eq:MD}
0 & = a\circ\left(G-\tau\circ F\right)(0), \ \ \forall \ \ a\in R.
\end{align}
but because the pairing $R_i\times Q_i\rightarrow \F$, $(a,H)=(a\circ H)(0)$ is non-degenerate, we see that Condition~\eqref{eq:MD} is equivalent to the claimed condition that $G=\tau\circ F$.
\end{proof}

\begin{remark}
	\label{rem:orientations}
	If $\tau$ is the Thom class of a map of AG algebras $\pi\colon A\rightarrow T$, and if $\lambda\in\F^\times$ is a non-zero constant, we can get a new Thom class  $\tau'=\lambda\cdot\tau$ by either scaling the distinguished socle generator of $A$ by $\lambda$, i.e. $a_{soc}\mapsto \lambda\cdot a_{soc}=a'_{soc}$, or by scaling the distinguished socle generator of $T$ by $\lambda^{-1}$, i.e. $t_{soc}\mapsto \lambda^{-1}\cdot t_{soc}=t'_{soc}$.
\end{remark}

In our proofs we make frequent use of the following result: it is valid for an arbitrary Artinian Gorenstein algebra $A$ (not necessarily graded) and an arbitrary Artinian algebra $B$.

\begin{lemma}
\label{lem:soclefits}
Let $\varphi:A\to B$ be a homomorphism of Artinian algebras so that $A$ is Gorenstein and $\varphi$ restricted to the socle of $A$ is injective. Then $\varphi$ is injective. 

In particular, if $\varphi:A\to B$ is a surjective homomorphism of graded AG rings of the same socle degree, then $\varphi$ is an isomorphism.
\end{lemma}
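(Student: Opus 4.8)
The plan is to prove the first (general) assertion directly by contradiction and then read off the graded statement as an easy corollary. The engine is the elementary fact that in an Artinian ring the socle is an \emph{essential} submodule, i.e. it meets every nonzero ideal. Concretely, I would suppose $K:=\ker(\varphi)\neq 0$. Then $K$ is a nonzero submodule of the Artinian $A$-module $A$, hence contains a simple $A$-submodule $S$; every simple submodule of $A$ is annihilated by the Jacobson radical $\mathfrak r$ of $A$ (Nakayama), so $S\subseteq \Ann_A(\mathfrak r)=\soc(A)$. More uniformly: for any ideal $K\subseteq A$ one has $\soc(K)=K\cap\soc(A)$, and the socle of a nonzero Artinian module is nonzero, which is precisely the essentiality statement. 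Therefore $K\cap\soc(A)\neq 0$, and choosing $0\neq s\in K\cap\soc(A)$ gives $\varphi(s)=0$ with $s$ in the socle, contradicting the hypothesis that $\varphi|_{\soc(A)}$ is injective. Hence $K=0$ and $\varphi$ is injective.

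For the ``in particular'' clause, let $A$ and $B$ be graded AG algebras of the same socle degree $d$ and let $\varphi\colon A\to B$ be a surjective degree-preserving homomorphism. Since $A$ is AG of socle degree $d$, its socle is precisely the one-dimensional top component $A_d$; since $B$ is AG of socle degree $d$ we have $B_d\neq 0$. Surjectivity in degree $d$ forces $\varphi(A_d)=B_d\neq 0$, so $\varphi|_{A_d}$ is a nonzero linear map out of a one-dimensional space, hence injective. The first part then yields that $\varphi$ is injective, and together with surjectivity this makes $\varphi$ an isomorphism.

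I do not anticipate a genuine obstacle here; the only point deserving care is the claim that a nonzero ideal of an Artinian ring must meet the socle (equivalently, that $\soc(A)$ is essential in $A$), which I would either dispatch in one line via ``every nonzero Artinian module has nonzero socle, applied to submodules of $A$'' or simply cite. It is worth noting that Gorensteinness of $A$ plays no role in the first, general statement; it is used only in the graded corollary, where it identifies $\soc(A)$ with the one-dimensional degree-$d$ component, so that a nonzero restriction of $\varphi$ to the socle is automatically injective.
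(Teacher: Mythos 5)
Your proof is correct and follows essentially the same route as the paper's: both rest on essentiality of the socle in an Artinian ring — the paper asserts, without spelling it out, that for any nonzero $a\in A$ there exists $a'$ with $0\neq aa'\in\soc(A)$, which is precisely what your simple-submodule argument justifies, and the ``in particular'' clause is handled the same way (surjectivity in degree $d$ forces $\varphi|_{A_d}$ to be nonzero, hence injective on the one-dimensional socle). Your aside that Gorensteinness plays no role in the first, general assertion is accurate.
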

\begin{proof}
Let $\varphi:A\to B$ be a ring homomorphism with $A$ AG. Let $a$ be an element of the kernel of $\varphi$. If $a\neq 0$ then there exists $0\neq a'\in A$ such that  $0\neq aa'\in\soc(A)$. However $\varphi(aa')=\varphi(a)\varphi(a')=0$ since $\varphi(a)=0$ by assumption. This contradicts the restriction of $\varphi$ to $\soc(A)$ being injective. Thus it must be the case that $a=0$ and consequently $\varphi$ is injective. 

Now assume $\varphi:A\to B$ is a surjective homomorphism of graded AG rings of the same socle degree. Then $\varphi:\soc(A)\to \soc(B)$ is a vector space isomorphism. Applying the first assertion, $\varphi$ is injective, hence bijective.
\end{proof}

\section{Cohomological Blow Ups}
\label{sec:CBU}
Let $A$ and $T$ be oriented AG algebras of socle degrees $d$ and $k$ respectively with $d>k$ and $\pi\colon A\rightarrow T$  a surjective degree preserving algebra map with Thom class  $\tau\in A_{d-k}$.
\begin{construction}
	\label{con:hat}
	Set $n=d-k$ and let $\xi$ be an indeterminate of degree one.  Choose any homogeneous elements $a_i\in A$ for $1\leq i\leq n$, and define the monic homogeneous polynomial of degree $n$, $f_A(\xi)\in A[\xi]$, by
	\begin{equation}
	\label{eq:fA}
	f_A(\xi)=\xi^n+a_{1}\xi^{n-1}+\cdots+a_{n-1}\xi+a_n.
	\end{equation}
	Then let $K\subset A$ be the kernel of $\pi$, and construct the algebra $\hat{A}$ by 
	\begin{equation}
	\label{eq:hatA}
	\hat{A}=\frac{A[\xi]}{(\xi\cdot K,f_A(\xi))}.
	\end{equation}
	We further set $t_i=\pi(a_i)$ for $1\leq i\leq n$, define the monic homogeneous polynomial $f_T(\xi)\in T[\xi]$ 
	\begin{equation}
	\label{eq:fT}
	f_T(\xi)=\xi^n+t_1\xi^{n-1}+\cdots+t_n,
	\end{equation} 
	and construct the algebra $\tilde{T}$ by 
	\begin{equation}
	\label{eq:hatT}
	\tilde{T}=\frac{T[\xi]}{(f_T(\xi))}.
	\end{equation}
	\end{construction}
Before giving the formal definition of the cohomological blow up (Definition \ref{def:blowup}), we shall discuss some of the fundamental properties of the algebras $\hat{A}$ and $\tilde{T}$ from Construction \ref{con:hat}.  First, note that since $\xi\cdot K=0$ in $\hat{A}$, the algebra structure of $\hat{A}$ depends only on the images $t_1,\ldots,t_{n-1}$, as well as $a_n$. There is a natural degree preserving algebra map $\beta\colon A\rightarrow \hat{A}$ induced from the natural inclusions $A\hookrightarrow A[\xi]$ which makes $\hat{A}$ into and $A$-algebra.  Also, the surjective tensor product map $\pi\otimes 1\colon A[\xi]\rightarrow T[\xi]$ passes to a surjective degree preserving map on quotients $\hat{\pi}\colon \hat{A}\rightarrow \tilde{T}$.  Moreover these maps $\pi:A\to T$, $\beta\colon A\rightarrow \hat{A}$ and $\hat{\pi}:\hat{A}\to \tilde{T}$ together with the natural inclusion $\beta_0\colon T\rightarrow \tilde{T}$ fit together in a commutative diagram
\begin{equation}
\label{eq:cd}
\xymatrix{A\ar[r]^-{\beta}\ar[d]_-{\pi} & \hat{A}\ar[d]^-{\hat{\pi}}\\ T\ar[r]_-{\beta_0} & \tilde{T}.}
\end{equation}    

The following elementary observation will be useful and we record it as a lemma.  We offer our own proof here for completeness, but it can also be deduced from \cite[Lemma 1.8]{IMM} and \cite[Lemma 2.1]{SmithStong}.  Recall that for graded Artinian algebras $A$, $B$, and $C$, we say that $C$ is a \emph{free extension of $A$ with fiber $B$} if there are maps $\iota\colon A\rightarrow C$ making $C$ into a free $A$-module and $\pi\colon C\rightarrow B$ with kernel $\ker(\pi)=\mathfrak{m}_A\cdot C$ where $\mathfrak{m}_A$ is the maximal ideal of $A$. 
\begin{lemma}
	\label{lem:freeExt}
	The algebra $\tilde{T}$ from Construction \ref{con:hat} in Equation \eqref{eq:hatT} is a free extension of $T$ with fiber $F\cong\F[\xi]/(\xi^n)$.  In particular, since $T$ and $F$ are Gorenstein, $\tilde{T}$ is also Gorenstein\footnote{A more general result established in a 2008 unpublished note ``Coexact Sequences of Poincar\'e Duality Algebras'', by L.~Smith and R.E.~Stong states that if $\tilde{T}$, $T$ and $F$ are graded Artinian algebras in which $\tilde{T}$ is a free extension of $T$ with fiber $F$, and any two are Gorenstein, then so is the third.}.
\end{lemma}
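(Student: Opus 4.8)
The plan is to verify the two defining properties of a free extension directly from the presentation $\tilde{T}=T[\xi]/(f_T(\xi))$, and then to deduce the Gorenstein conclusion by an explicit socle computation rather than by invoking the general Smith--Stong result quoted in the footnote.

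First I would establish freeness. Since $f_T(\xi)=\xi^n+t_1\xi^{n-1}+\cdots+t_n$ is monic of degree $n$ over $T$, division with remainder shows that $\tilde{T}$ is a free $T$-module with homogeneous basis $1,\xi,\dots,\xi^{n-1}$, where $\xi^i$ sits in degree $i$; this provides the structure map $\iota\colon T\to\tilde{T}$ and the freeness. For the fiber map, reduce coefficients modulo $\mathfrak{m}_T$: the ring map $\rho\colon T[\xi]\to\F[\xi]$ sends $f_T(\xi)$ to $\xi^n$ because every $t_i$ lies in $\mathfrak{m}_T$ (it is homogeneous of positive degree $i$), so $\rho$ descends to a surjection $\bar\rho\colon\tilde{T}\to\F[\xi]/(\xi^n)=F$. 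It then remains to check $\ker\bar\rho=\mathfrak{m}_T\cdot\tilde{T}$: the inclusion $\supseteq$ is clear, and for $\subseteq$ one writes any $p\in T[\xi]$ as $p=qf_T+s$ with $\deg s<n$ and notes that $\rho(p)\in(\xi^n)$ forces $\rho(s)=0$ for degree reasons, hence $s\in\mathfrak{m}_T[\xi]$ and $p\in(f_T)+\mathfrak{m}_T[\xi]$. This shows $\tilde{T}$ is a free extension of $T$ with fiber $F$, and $F\cong\F[\xi]/(\xi^n)$ is visibly Gorenstein.

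Next, the Gorenstein statement for $\tilde{T}$. Being a finite free $T$-module, $\tilde{T}$ is a connected graded Artinian $\F$-algebra, so it suffices to show $\soc(\tilde{T})$ is one-dimensional. From the basis $1,\xi,\dots,\xi^{n-1}$ and the fact that $T$ has socle degree $k$ with $\dim_\F T_k=1$, one reads off that $\tilde{T}$ has top degree $k+n-1$ and that $\tilde{T}_{k+n-1}=T_k\cdot\xi^{n-1}$ is one-dimensional, spanned by $t_{soc}\,\xi^{n-1}$. The key claim is that every nonzero homogeneous $c\in\tilde{T}$ has a nonzero multiple lying in degree $k+n-1$, which forces $\soc(\tilde{T})\subseteq\tilde{T}_{k+n-1}$ and hence $\tilde{T}$ Gorenstein, of socle degree $k+n-1=d-1$. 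To prove the claim, write $c=\sum_{i=0}^{n-1}u_i\xi^i$ with $u_i\in T$ homogeneous, let $\ell$ be the largest index with $u_\ell\neq 0$, and use Gorenstein-ness of $T$ to choose a homogeneous $v\in T$ with $u_\ell v\neq 0$ in $T_k$. Then in $T[\xi]$ we have $c\cdot(v\xi^{n-1-\ell})=\sum_{i=0}^{\ell}u_i v\,\xi^{i+n-1-\ell}$, and for $i<\ell$ the coefficient $u_iv$ has degree strictly larger than $k$, hence vanishes; therefore $c\cdot(v\xi^{n-1-\ell})=u_\ell v\,\xi^{n-1}\neq 0$, and this stays nonzero in $\tilde{T}$ since $\xi^{n-1}$ is a free-basis element. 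This yields the claim.

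The freeness and the identification of the fiber are routine (standard facts about monic polynomials and reduction modulo $\mathfrak{m}_T$). The step requiring a little care is the socle computation: $\tilde{T}$ is genuinely a \emph{twisted} free extension — the lower-order terms of $f_T$ prevent $\tilde{T}$ from being $T\otimes_\F\F[\xi]/(\xi^n)$ as an algebra — so one cannot simply cite Gorenstein-ness of tensor products, and the degree-counting argument above (which exploits that the fiber basis $1,\xi,\dots,\xi^{n-1}$ is totally ordered by degree) is what makes the short proof go through. Alternatively one could deduce everything from the cited result of Smith--Stong on coexact sequences of Poincar\'e duality algebras, but the self-contained argument is short enough to record directly.
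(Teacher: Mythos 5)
Your proof is correct. The freeness part is essentially the paper's argument: both exhibit the free $T$-basis $1,\xi,\dots,\xi^{n-1}$ via the monic relation $f_T(\xi)$, and both check that the fiber over $\mathfrak{m}_T$ is $\F[\xi]/(\xi^n)$ (you spell out $\ker\bar\rho=\mathfrak{m}_T\cdot\tilde{T}$ a bit more explicitly than the paper does, but it is the same computation).

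In the Gorenstein step you take a slightly different angle from the paper, though both arguments live inside the same free-module decomposition. The paper starts from a homogeneous socle element $\hat t=\sum s_i\xi^i$, uses $T_+\cdot\hat t=0$ and freeness to force each $s_i\in\soc(T)$, then homogeneity to force a single surviving $s_i$, and finally $\xi\cdot\hat t=0$ to force $i=n-1$; this directly pins down $\soc(\tilde T)=\soc(T)\cdot\xi^{n-1}$. You instead prove a Poincar\'e-duality-type statement: every nonzero homogeneous $c=\sum u_i\xi^i$, after multiplying by $v\xi^{n-1-\ell}$ (with $\ell$ the top $\xi$-degree appearing in $c$ and $v$ chosen via Gorenstein duality in $T$), lands nontrivially in $\tilde T_{k+n-1}$, because the terms $u_iv$ for $i<\ell$ die for degree reasons. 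That yields $\soc(\tilde T)\subseteq\tilde T_{k+n-1}$, which is one-dimensional. Both arguments are short and hinge on exactly the same two facts --- $\tilde T$ is $T$-free on $1,\xi,\dots,\xi^{n-1}$ and $T$ is Gorenstein of socle degree $k$ --- so I would call yours a mild but clean variant rather than a genuinely new route; its small advantage is that it records the nondegenerate pairing into the top degree explicitly, which is often what one actually wants downstream.
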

\begin{proof}
	We first claim that the map $\beta_0\colon T\rightarrow \tilde{T}$ makes $\tilde{T}$ into a free $T$-module with basis $\{1,\xi,\ldots,\xi^{n-1}\}$.  Clearly this set generates $\tilde{T}$ as a $T$-module, since powers of $\xi$ generate $T[\xi]$ as a $T$-module, and all powers greater than $n-1$ can be eliminated with the relation $\xi^n\equiv -(t_1\xi^{n-1}+\cdots+t_n)$.  A $T$-dependence relation in $\tilde{T}$ lifts to a $T$-relation in $T[\xi]$ of the form 
	$c_0\cdot 1+c_1\xi+\cdots+c_{n-1}\xi^{n-1}=t\cdot \left(\xi^n+t_1\xi^{n-1}+\cdots+t_n\right)$ for some $c_i,r\in T$.
	Comparing $\xi$-coefficients, we conclude that $t=0$, and hence $c_i=0$ for all $i$ since $\{1,\xi,\ldots,\xi^{n-1}\}$ are $T$-linearly independent in $T[\xi]$.  Finally note that the natural projection map $\phi\colon T\rightarrow \F$ extends to a projection map $\tilde{\phi}\colon \tilde{T}\rightarrow \F[\xi]/(\xi^n)$ with kernel $\ker(\tilde{\phi})=\mathfrak{m}_T\cdot \tilde{T}$.  Therefore $\tilde{T}$ is a free extension of $T$ with fiber $F=\F[\xi]/(\xi^n)$ as claimed.
	
	For the last statement, it suffices to show that the socle of $\tilde{T}$ satisfies $\operatorname{soc}(\tilde{T})=\operatorname{soc}(T)\cdot\xi^{n-1}$.  One containment is obvious, and for the other, assume that $\hat{t}\in\operatorname{soc}(\tilde{T})$ is a homogeneous socle element.  From our arguments above, we may decompose it as $\hat{t}=s_0+s_1\xi+\cdots+s_{n-1}\xi^{n-1}$ for some unique $s_i\in T$.  Since $\hat{t}$ is in the socle, we must have $t\cdot \hat{t}=0$ for any positive degree element $t\in T_+$, which implies that $s_i\in\operatorname{soc}(T)$ for all $i$, by linear independence of $\{1,\ldots,\xi^{n-1}\}$.  On the other hand, since $\hat{t}$ is homogeneous, and $\deg(s_i)=\deg(\hat{t})-i$ it follows that there is only one non-zero $s_i$, and since $\xi\cdot \hat{t}=0$ it follows that $i=n-1$.  Therefore we have shown that $\hat{t}=s_{n-1}\cdot\xi^{n-1}$ where $s_{n-1}\in\operatorname{soc}(T)$, which implies that $\operatorname{soc}(\tilde{T})=\operatorname{soc}(T)\cdot\xi^{n-1}$, as desired.	
\end{proof}

Lemma \ref{lem:FreeExt} implies that $\tilde{T}$ is always Gorenstein.  On the other hand, the algebra $\hat{A}$ may not always be Gorenstein, as the following example shows.

\begin{example}
\label{ex:notGor}
Let 
$$A=\frac{\F[x,y]}{(x^3,y^3)} \ \overset{\pi}{\rightarrow} \ T=\frac{\F[x,y]}{(x^2,y)}$$
where $\pi(x)=x$ and $\pi(y)=0$ (here $d=4$, $k=1$, and $n=d-k=3$).  Note $K=\ker(\pi)=(x^2,y)$.  Orient $A$ and $T$ with socle generators $a_{soc}=x^2y^2$ and $t_{soc}=x$; then the Thom class of $\pi$ is $\tau=xy^2\in A_3$.  Set $f_T(\xi)=\xi^3+x\xi^2\in T[\xi]$, so that $t_{1}=x$ and $t_{2}=t_{3}=0$, and let $\tilde{T}$ be the associated free extension:
$$\tilde{T}=\frac{T[\xi]}{(f_T(\xi))}=\frac{\F[x,y,\xi]}{(x^2,y,\xi^3+x\xi^2)}.$$
Below are different algebras $\hat{A}$ for different choices of $\pi$-lifts $f_A(\xi)=\xi^3+a_1\xi^2+a_2\xi+a_3$ of $f_T(\xi)$, according to Construction \ref{con:hat}:
\begin{enumerate}
\item $a_1=x$, $a_2=0$, and $a_3=(x^2y+xy^2)$.  Then $f_A(\xi)\in A[\xi]$ is $f_A(\xi)=\xi^3+x\xi^2+(x^2y+xy^2)$, and  
\[
\hat{A}=\frac{A[\xi]}{(\xi \cdot K,f_A(\xi))}=\frac{\F[x,y,\xi]}{(x^3,y^3,x^2\xi,y\xi,\xi^3+x\xi^2+x^2y+xy^2)}
\]
Then an $\F$-basis for $\hat{A}$ is 
$$\left\{1,x,y,\xi,x^2,xy,y^2, x\xi, \xi^2, x^2y, xy^2,x\xi^2\right\}$$
from which it follows that the Hilbert function for $\hat{A}$ is 
$$H(\hat{A})=(1,3,5,3)$$
and hence $\hat{A}$ is not Gorenstein.  Note that in this case, the socle generator $a_{soc}=x^2y^2$ is actually in the ideal $(\xi \cdot K,\hat{f}(\xi))$, hence $\beta(a_{soc})=0$ and thus $\beta$ is not injective.

\item $a_1=x$, $a_2=0$, and $a_3=xy^2=\tau$. Then $f_A(\xi)=\xi^3+x\xi^2+xy^2\in A[\xi]$ and
$$\hat{A}=\frac{A[\xi]}{(\xi \cdot K,f_A(\xi))}=\frac{\F[x,y,\xi]}{(x^3,y^3,x^2\xi,y\xi,\xi^3+x\xi^2+xy^2)}$$ 
has basis 
$$\left\{1,x,y,\xi,x^2,xy,y^2,x\xi,\xi^2,x^2y,xy^2,x\xi^2,x^2y^2\right\}$$
and Hilbert function 
$$H(\hat{A})=(1,3,5,3,1).$$
Here the socle of $\hat{A}$ is generated by $\hat{a}_{soc}=a_{soc}=x^2y^2$, hence $\hat{A}$ is Gorenstein.  

\item $a_1=x+y$ and $a_2=a_3=0$. Here we chose a different $\pi$-lift of $t_1=x$, but the reader will see it does not affect $\hat{A}$; the important choice is  $a_3=0$.
Then $f_A(\xi)=\xi^3+x\xi^2\in A[\xi]$ and 
$$\hat{A}=\frac{\F[x,y,\xi]}{(x^3,y^3,x^2\xi,y\xi,\xi^3+(x+y)\xi^2)}=\frac{\F[x,y,\xi]}{(x^3,y^3,x^2\xi,y\xi,\xi^3+x\xi^2)}$$
with basis 
$$\left\{1,x,y,\xi,x^2,xy,y^2,x\xi,\xi^2,x^2y,xy^2,x\xi^2,x^2y^2\right\}$$
and Hilbert function 
$$H(\hat{A})=(1,3,5,3,1)$$
However, note that in this case the socle of $\hat{A}$ is the two-dimensional $\F$-vector space generated by $x\xi^2$ and $x^2y^2$, therefore $\hat{A}$ is not Gorenstein.  In contrast to case (1), in this case the image of the socle of $A$ in $\hat{A}$ via $\beta$ is nonzero and in fact the map $\beta\colon A\rightarrow\hat{A}$ is injective.
\end{enumerate} 
\end{example}

The example suggests a strong dependence of the Gorenstein property on the choice of $a_n$ from Equation \eqref{eq:fA}.  Before we give our main result in this direction, we need some lemmas.  The following lemma computes the Hilbert function of $\hat{A}$ in terms of those of $A$ and $T$.
\begin{lemma}
	\label{lem:decomp}
	For each $\hat{a}\in \hat{A}$, there exist unique elements $b_0\in A$ and elements $s_1,\ldots,s_{n-1}\in T$ for which 
	\[\hat{a}=\beta(b_0)+\beta(b_1)\xi^1+\cdots+\beta(b_{n-1})\xi^{n-1}, \ \ \text{for some} \ \ b_i\in A, \ \ \text{where} \ \ \pi(b_i)=s_i.\]
	In particular the Hilbert function of $\hat{A}$ satisfies 
	\begin{equation}
	\label{eq:HhatA}
	H(\hat{A})=H(\beta(A))+H(T)[1]+\cdots+H(T)[n-1].
	\end{equation}
\end{lemma}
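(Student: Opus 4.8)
The plan is to analyze the algebra $\hat{A}=A[\xi]/(\xi\cdot K,f_A(\xi))$ directly as a module over the polynomial extension $A[\xi]$ and extract a normal form for its elements. First I would observe that as an $A$-module, $A[\xi]$ is free with basis $\{1,\xi,\xi^2,\dots\}$, and that the relation $f_A(\xi)=\xi^n+a_1\xi^{n-1}+\cdots+a_n$ allows us to rewrite every power $\xi^m$ with $m\geq n$ in terms of $1,\xi,\dots,\xi^{n-1}$ with coefficients in $A$; this is the standard argument (already used in the proof of Lemma~\ref{lem:freeExt}) showing $A[\xi]/(f_A(\xi))$ is free over $A$ with basis $\{1,\xi,\dots,\xi^{n-1}\}$. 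Consequently every element of $\hat{A}$ can be written in the form $\beta(b_0)+\beta(b_1)\xi+\cdots+\beta(b_{n-1})\xi^{n-1}$ for some $b_i\in A$.

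Next I would pin down the ambiguity in this expression. The additional relations $\xi\cdot K=0$ in $\hat{A}$ mean that for $i\geq 1$ the coefficient $b_i$ only matters modulo $K=\ker(\pi)$, i.e.\ only $\pi(b_i)=s_i\in T$ is well-defined, whereas $b_0$ is genuinely an element of $A$ (more precisely, of $\beta(A)$). To make this precise I would identify the kernel of the surjection $A[\xi]\twoheadrightarrow \hat{A}$ restricted to the free module $\bigoplus_{i=0}^{n-1}A\xi^i$: an element $\sum b_i\xi^i$ maps to zero in $\hat{A}$ iff it lies in the submodule generated by $\xi\cdot K$ (after reducing mod $f_A(\xi)$), and since $\xi\cdot K$ contributes only to the $\xi^1,\dots,\xi^{n-1}$ components (multiplication of $K$ by $\xi,\dots,\xi^{n-1}$ stays within degrees $\leq n-1$ in $\xi$ up to the $f_A$ relation — here one must check the reduction of $\xi^{n-1}\cdot K$ via $f_A$ does not leak back into the $\xi^0$ slot in a way that breaks uniqueness of $b_0$), one concludes the $\xi^0$-coefficient $b_0\in A$ is uniquely determined and each higher coefficient is determined exactly up to $K$. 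This gives the claimed internal direct sum decomposition $\hat{A}\cong \beta(A)\oplus T\xi\oplus\cdots\oplus T\xi^{n-1}$ as graded vector spaces, where the $\xi^i$ summand sits in degrees shifted up by $i$.

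Finally, the Hilbert function formula \eqref{eq:HhatA} follows immediately by taking dimensions in each graded piece of this decomposition: the $\beta(A)$ summand contributes $H(\beta(A))$, and each $T\xi^i$ summand contributes $H(T)$ shifted by $i$, i.e.\ $H(T)[i]$, summed over $i=1,\dots,n-1$. I would phrase this last step in one or two sentences.

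The main obstacle I anticipate is the uniqueness claim for $b_0$ — more precisely, verifying that when one reduces a relation coming from $\xi^{n-1}\cdot k$ (for $k\in K$) modulo $f_A(\xi)$, the resulting $A$-linear combination of $1,\xi,\dots,\xi^{n-1}$ has $\xi^0$-coefficient landing in $K$ as well (so that it does not impose a spurious relation on $b_0$), and dually that $b_0\in A$ is not merely well-defined mod $K$. This hinges on the precise structure of $f_A(\xi)$ and on $a_n=\lambda\tau$ annihilating $K$ (Lemma~\ref{lem:Thomdual}): indeed $\xi^{n-1}\cdot k$ reduces via $f_A$ to involve the term $a_n k=\lambda\tau k=0$ in the $\xi^0$ slot, so the $\xi^0$-coefficient of any consequence of $\xi\cdot K$ vanishes outright, not just modulo $K$. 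Handling this reduction carefully, and making sure the argument is stated so it applies whether or not $\hat{A}$ turns out to be Gorenstein (since this lemma precedes Theorem~\ref{thm:hatAGor} and is used in its proof), is the delicate point; everything else is bookkeeping with the free-module structure.
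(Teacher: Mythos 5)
Your existence argument matches the paper's exactly. The uniqueness step is where you diverge, and it is also where the gap lies. You correctly identify the crux — whether the consequences of $\xi\cdot K$, after reduction modulo $f_A(\xi)$, leak into the $\xi^0$-slot — but you resolve it by invoking $a_n=\lambda\tau$, which via Lemma~\ref{lem:Thomdual} gives $a_n K=0$. That identity is \emph{not} a hypothesis of Lemma~\ref{lem:decomp}: Construction~\ref{con:hat} allows $a_n\in A_n$ to be arbitrary, and this lemma precedes and is used in the proof of Theorem~\ref{thm:hatAGor}, which establishes the equivalence of Gorensteinness of $\hat{A}$ with $a_n=\lambda\tau$. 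Invoking that equality here would make the later argument circular. You flag the tension yourself ("making sure the argument ... applies whether or not $\hat{A}$ turns out to be Gorenstein"), but the fix you propose defeats exactly that goal; and indeed when $a_n K\neq 0$ the $\xi^0$-coefficients do \emph{not} vanish and $b_0$ is genuinely not unique in $A$ (your own phrase "more precisely, of $\beta(A)$" is the right instinct, but your argument then contradicts it).

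The repair, still within your kernel-computation framework, is to aim only for uniqueness of $\beta(b_0)\in\beta(A)\subseteq\hat{A}$, which is all the conclusion — and in particular the Hilbert function formula, stated with $H(\beta(A))$ — actually requires. One checks by iterating the reduction relation that the $\xi^0$-coefficient of $k\xi^m$ modulo $f_A(\xi)$, for $k\in K$ and $m\ge 1$, always lies in $a_n\cdot K$: the first nontrivial case is $k\xi^n\equiv -k(a_1\xi^{n-1}+\cdots+a_n)$ with $\xi^0$-coefficient $-a_nk$, and higher $m$ inherit a factor of $a_nk$ by induction. Since $a_n\cdot K=\ker\beta$ (this is exactly Lemma~\ref{lem:Kcap}), $b_0$ is determined precisely up to $\ker\beta$, as claimed. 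The paper avoids this coefficient-tracking entirely: it pushes the difference of two decompositions through $\hat{\pi}\colon\hat{A}\to\tilde{T}$, uses freeness of $\tilde{T}$ over $T$ (Lemma~\ref{lem:freeExt}) to conclude $\pi(b_i)=\pi(c_i)$ for all $i$ including $i=0$, and then feeds $b_0-c_0\in K$ back into $\hat{A}$ (where $\xi\cdot K=0$) to kill all higher terms at once, giving $\beta(b_0)=\beta(c_0)$ with no case analysis on the $a_i$.
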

\begin{proof}
	Existence of such a decomposition is easy:  every element $\hat{a}\in\hat{A}$ has a representative in the polynomial ring $A[\xi]$, and high powers of $\xi$ can be reduced via the relation
	\[
	f_A(\xi)=\xi^n+a_1\xi^{n-1}+\cdots+a_n\equiv 0.
	\]
	As for uniqueness, suppose that there are some other elements $c_0\in A$ and other $s_1',\ldots,s_{n-1}'\in T$ with $\pi$-lifts $c_1,\ldots,c_{n-1}\in A$ such that 
	\[
	\hat{a}=\beta(c_0)+\beta(c_1)\xi+\cdots + \beta(c_{n-1})\xi^{n-1}.
	\]
	Comparing their decompositions in $\tilde{T}$ via the projection $\hat{\pi}$ we find that 
	\[
	0=\beta_0(\pi(b_0-c_0))+\beta_0\left(\pi(b_1-c_1)\right)\xi+\cdots+\beta_0\left(\pi(b_{n-1}-c_{n-1})\right)\xi^{n-1}.
	\]
	It follows from Lemma \ref{lem:FreeExt} that $\tilde{T}$ is a free $T$-module with basis $\{1,\xi,\ldots,\xi^{n-1}\}$ via the injective map $\beta_0\colon T\rightarrow \tilde{T}$, and hence we conclude that $s_i=\pi(b_i)=\pi(c_i)=s_i'$ for all $i=1,\ldots,n-1$, and that $\pi(a_0)=\pi(c_0)$, hence $b_0-c_0\in K$.  But we also have the relation in $\hat{A}$ 
	\[
	0=\beta\left(b_0-c_0\right)+\beta\left(b_1-c_1\right)\xi+\cdots+\beta\left(b_{n-1}-c_{n-1}\right)\xi^{n-1}.
	\]
	Since $\beta\left(b_i-c_i\right)\xi^i\in \left(\xi \cdot K\right) $, the identity displayed above simplifies to  
	\[
	0=\beta(b_0-c_0)
	\]
	and hence $\beta(b_0)=\beta(c_0)$ in $\hat{A}$. Thus the decomposition is unique in the desired sense. Equation \eqref{eq:HhatA} follows immediately from this decomposition. 
\end{proof}

\begin{lemma}
	\label{lem:Kcap}
	With $A$, $T$, $\pi$, $K$, and $f_A(\xi)=\xi^n+a_1\xi^{n-1}+\cdots+a_n\in A[\xi]$ as in Construction \ref{con:hat}, we have in $A[\xi]$
	$$\left(\xi\cdot K,f_A(\xi)\right)\cap \beta(A)=a_n\cdot K.$$
	
\end{lemma}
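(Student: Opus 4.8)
The plan is to massage the ideal into a workable shape and then verify the two inclusions. The first observation is that, because $K$ is an ideal of $A$, the ideal of $A[\xi]$ generated by the set $\{\xi\kappa : \kappa\in K\}$ is precisely $\xi\cdot K[\xi]$, where $K[\xi]$ denotes the set of polynomials in $\xi$ all of whose coefficients lie in $K$; thus $(\xi\cdot K,\,f_A(\xi))=\xi\cdot K[\xi]+f_A(\xi)A[\xi]$ inside $A[\xi]$. I will also identify $\beta(A)$ with the subring of constant polynomials $A\subseteq A[\xi]$, so that the statement to prove becomes: a constant polynomial lies in $\xi\cdot K[\xi]+f_A(\xi)A[\xi]$ if and only if it belongs to $a_n\cdot K$.

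For the easy inclusion $a_n\cdot K\subseteq(\xi\cdot K,f_A(\xi))\cap\beta(A)$, I would take $c\in K$ and expand $c\cdot f_A(\xi)=c\,\xi^n+(a_1c)\,\xi^{n-1}+\cdots+(a_{n-1}c)\,\xi+a_nc$; since $K$ is an ideal, each of the first $n$ summands lies in $\xi\cdot K[\xi]$, so subtracting them exhibits $a_nc$ as a constant element of the ideal $(\xi\cdot K,f_A(\xi))$. This step is a routine computation.

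For the reverse inclusion I would start from $b\in\beta(A)$ written as $b=\xi\,k(\xi)+g(\xi)f_A(\xi)$ with $k(\xi)\in K[\xi]$ and $g(\xi)\in A[\xi]$, and then play two maps against each other. First, applying the coefficientwise projection $\pi\otimes 1\colon A[\xi]\to T[\xi]$ kills $\xi\,k(\xi)$, since its coefficients lie in $K=\ker\pi$, leaving $\pi(b)=\bar g(\xi)f_T(\xi)$ in $T[\xi]$ with $\bar g=(\pi\otimes 1)(g)$ and $f_T$ the monic degree-$n$ polynomial of Construction \ref{con:hat}. Because $\pi(b)\in T$ is a constant while $f_T$ is monic of degree $n=d-k\geq 1$, a degree comparison in the variable $\xi$ forces $\bar g=0$, that is, $g(\xi)\in K[\xi]$. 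Then I would apply the evaluation $\xi\mapsto 0$ to the same expression $b=\xi\,k(\xi)+g(\xi)f_A(\xi)$, obtaining $b=g(0)\cdot a_n$ with $g(0)\in K$, hence $b\in a_n\cdot K$, as needed.

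The only genuinely delicate point — and the one I would be most careful about — is the $\xi$-degree comparison carried out over $T[\xi]$: since $T$ need not be a domain, one cannot argue with degrees of arbitrary products, but multiplication by the \emph{monic} polynomial $f_T(\xi)$ raises the $\xi$-degree by exactly $n$ and leaves the leading coefficient of $\bar g$ intact, so if $\bar g\neq 0$ the product $\bar g(\xi)f_T(\xi)$ has a nonzero coefficient in $\xi$-degree $\deg_\xi\bar g+n\geq 1$, contradicting that $\pi(b)$ is constant. Once that is pinned down, the remainder is bookkeeping around the identification $(\xi\cdot K)=\xi\cdot K[\xi]$ and the two structure maps $\pi\otimes 1$ and $\xi\mapsto 0$.
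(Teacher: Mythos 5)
Your proof is correct, and it takes a genuinely different route from the paper's. The paper argues entirely inside $A[\xi]$ by a Euclidean-style iteration: from $b-g(\xi)f_A(\xi)\in(\xi\cdot K)$, compare top-degree coefficients to see that the leading coefficient $g_m$ of $g$ lies in $K$; if $m\geq 1$ absorb $g_m\xi^m f_A(\xi)$ into $(\xi\cdot K)$ to lower the degree of $g$; repeat until $g=g_0\in K$, and then note $b-g_0a_n\in(\xi\cdot K)$ is a constant hence zero. You instead detect \emph{all} the coefficients of $g$ at once by pushing the identity forward along the coefficient-wise map $\pi\otimes 1\colon A[\xi]\to T[\xi]$, where the $\xi K[\xi]$-term dies and the monicity of $f_T$ forces $\bar g=0$; you then recover $b=g(0)a_n$ by the evaluation $\xi\mapsto 0$. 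The two proofs hinge on the same essential fact, namely that $f_A(\xi)$ is monic (so leading coefficients can't cancel), but your use of the auxiliary homomorphisms $\pi\otimes 1$ and $\xi\mapsto 0$ replaces the paper's induction on $\deg_\xi g$ with a single, cleaner degree comparison in $T[\xi]$. You are also right to flag the delicate point: in a non-domain one cannot compare degrees of arbitrary products, and the argument really does rely on $f_T(\xi)$ being monic. One small bookkeeping remark: your identification $(\xi\cdot K)=\xi K[\xi]$ (as ideals of $A[\xi]$) is the same observation the paper uses implicitly when "comparing coefficients," and it is worth stating, as you did, since it is exactly what makes both the vanishing under $\pi\otimes 1$ and the easy inclusion go through.
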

\begin{proof}
	Let $b\in \left(\xi\cdot K,f_A(\xi)\right)\cap \beta(A)$.  Then there is a polynomial $g(\xi)\in A[\xi]$ for which $b-g(\xi)f_A(\xi)\in \left(\xi\cdot K\right)$ in $A[\xi]$.  Writing $g(\xi)=g_m\xi^m+\cdots+g_1\xi+g_0$ for $g_i\in A$, since $f_A(\xi)$ is monic we must therefore have in $A[\xi]$
	\begin{equation}
	\label{eq:Kcap}
	b-g_m\xi^{m+n}+\left(\text{lower order terms}\right)\in\left(\xi \cdot K\right).
	\end{equation}
	Since $m+n\geq n\geq 1$ we can compare coeffients on the left hand side and right hand side of \eqref{eq:Kcap} to deduce that $g_m\in K$.  If $m\geq 1$, one we can combine the term $g_m\xi^m \cdot \hat{f}(\xi)$ with the other $\left(\xi \cdot K\right)$ terms, and lower the $\xi$-degree of $g(\xi)$.  Repeating this procedure, we may assume that the $\xi$-degree is $m=0$, that $g_m=g_0\in K$, and hence that $b-g_0 \cdot f_A(\xi)\in \left(\xi \cdot K\right)$.  Since $g_0\in K$, it follows that $g_0\left(f_A(\xi)-a_n\right)\in \left(\xi\cdot K\right)$ as well, and therefore that $b-g_0\cdot a_n\in\left(\xi\cdot K\right)$ which implies that $b-g_0\cdot a_n=0$, and hence that $b\in a_n
	\cdot K$, as desired.  The reverse containment is obvious.
\end{proof}

\begin{theorem}
	\label{thm:hatAGor}
	Let $A$, $T$, $\pi$, $\tau$, $f_A(\xi)$, $\hat{A}$, and $\beta$
	be as above.  Then
	\begin{enumerate}
		\item the algebra map $\beta\colon A\rightarrow \hat{A}$ is injective if and only if the constant coefficient of $f_A(\xi)$ satisfies $a_n=\lambda\cdot \tau$ for some $\lambda\in\F$ (possibly $\lambda=0$), and 
	
		\item the algebra $\hat{A}$ is Gorenstein if and only if $a_n=\lambda\cdot\tau$ for some non-zero $\lambda\in\F^\times$.
	\end{enumerate}
\end{theorem}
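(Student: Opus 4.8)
The plan is to derive both statements from four facts established above: the identification $\ker\beta=(\xi\cdot K,f_A(\xi))\cap\beta(A)=a_n\cdot K$ from Lemma~\ref{lem:Kcap}, the Thom‑class characterization of Lemma~\ref{lem:Thomdual}, the normal‑form decomposition $\hat a=\sum_{i=0}^{n-1}\beta(b_i)\xi^i$ of Lemma~\ref{lem:decomp}, and the socle criterion of Lemma~\ref{lem:soclefits}. Part~(1) is then immediate: since $a_n$ is homogeneous of degree $n=d-k$, Lemma~\ref{lem:Thomdual} gives $a_n\cdot K=0$ precisely when $a_n=\lambda\tau$ for some $\lambda\in\F$ (with $\lambda=0$ allowed), so $\ker\beta=0$ exactly under that condition.

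For the ``if'' direction of (2), suppose $a_n=\lambda\tau$ with $\lambda\neq 0$; then $a_n\cdot K=\lambda(\tau\cdot K)=0$ by Lemma~\ref{lem:Thomdual}, so $\beta$ is injective and $\beta(a_{soc})\neq 0$. As $\hat A$ is finite-dimensional (Lemma~\ref{lem:decomp}), it remains to prove $\soc(\hat A)=\F\cdot\beta(a_{soc})$. One inclusion is clear, using that $a_{soc}\in K$ (because $\pi(a_{soc})\in T_d=0$), hence $\xi\cdot\beta(a_{soc})=0$. For the reverse, I would take a nonzero homogeneous $\hat s\in\soc(\hat A)$ of degree $e$, write $\hat s=\sum_{i=0}^{n-1}\beta(b_i)\xi^i$, and use the uniqueness in Lemma~\ref{lem:decomp} to read off the equations $\mathfrak{m}_A\hat s=0$ and $\xi\hat s=0$ componentwise: the first forces $b_0\in\soc(A)=A_d$ and $\pi(b_i)\in\soc(T)=T_k$ for $i\ge 1$, so every term $\beta(b_i)\xi^i$ with $i\ge1$ vanishes unless $i=e-k$; then $\xi\hat s=0$ kills the one surviving term unless $e=d-1$. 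The only delicate case is therefore $\hat s=\beta(b_{n-1})\xi^{n-1}$ with $\pi(b_{n-1})\in T_k$, where $\xi\hat s=-\lambda\beta(b_{n-1}\tau)$; but $b_{n-1}\tau=0$ would force $\int_T\pi(b_{n-1})=\int_A\tau b_{n-1}=0$ by the defining relation~\eqref{eq:Thom}, hence $\pi(b_{n-1})=0$ since $\int_T$ is an isomorphism on $T_k$, hence $b_{n-1}\in K$ and $\hat s=0$ --- a contradiction. This confines the socle to degree $d$, where it equals $\F\cdot\beta(a_{soc})$, so $\hat A$ is Gorenstein.

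For the ``only if'' direction, assume $\hat A$ is Gorenstein. From Lemma~\ref{lem:decomp} one gets $\hat A_j=0$ for $j>d$, while $\hat\pi\colon\hat A\to\tilde T$ is a degree-preserving surjection with $\tilde T_{d-1}=\soc(T)\xi^{n-1}\neq 0$ (Lemma~\ref{lem:freeExt}), so the socle degree of $\hat A$ lies in $\{d-1,d\}$. I would exclude $d-1$: in that case $\hat A$ and $\tilde T$ are AG algebras of equal socle degree and $\hat\pi$ is surjective, so $\hat\pi$ is an isomorphism by Lemma~\ref{lem:soclefits}; chasing the square~\eqref{eq:cd} (with $\beta_0$ injective) yields $\ker\beta=\ker\pi=K$, but $\ker\beta=a_n\cdot K$ by part~(1), so $a_n\cdot K=K$, whence $K=0$ by graded Nakayama and $\pi$ is an isomorphism --- impossible since $d>k$. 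Thus $\hat A$ has socle degree $d$, so $\hat A_d=\beta(A)_d\neq 0$ (the $T$-contributions to $\hat A_d$ vanish by Lemma~\ref{lem:decomp}), giving $\beta(a_{soc})\neq 0$; then $\beta$ is injective by Lemma~\ref{lem:soclefits}, and part~(1) gives $a_n=\lambda\tau$. Finally $\lambda\neq 0$, for if $a_n=0$ then $\beta(a_{soc})$ (degree $d$) and $\beta(b)\xi^{n-1}$ with $\pi(b)=t_{soc}$ (degree $d-1$, nonzero as it maps to $t_{soc}\xi^{n-1}$) are linearly independent socle elements of $\hat A$, contradicting the Gorenstein hypothesis.

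The main obstacle is the socle computation in the ``if'' direction, concentrated in the borderline case of a putative degree-$(d-1)$ socle element $\beta(b_{n-1})\xi^{n-1}$: this is exactly where the top of $T$, reached through $\xi^{n-1}$, interacts with the top of $A$, reached through $a_n$, and the crux is that the Thom-class relation~\eqref{eq:Thom} --- equivalently $a_n\in\F^\times\tau$ --- is precisely what forbids such an element from being annihilated by $\xi$, so that no extra socle is created. Everything else is either formal (part~(1), the Nakayama/isomorphism argument) or routine degree bookkeeping with the decomposition of Lemma~\ref{lem:decomp}.
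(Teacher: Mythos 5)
Your proof is correct and follows essentially the same route as the paper: part (1) via Lemmas~\ref{lem:Kcap} and~\ref{lem:Thomdual}; the ``if'' direction of (2) by reducing a homogeneous socle element to a single term and computing $\xi\hat s=-\lambda\beta(b_{n-1}\tau)$; the ``only if'' direction by ruling out socle degree $d-1$ via Lemma~\ref{lem:soclefits} and Lemma~\ref{lem:Kcap} and then ruling out $\lambda=0$ by exhibiting two independent socle elements. The only cosmetic difference is that you isolate the single surviving $\xi$-term by degree bookkeeping with Lemma~\ref{lem:decomp}, whereas the paper does so by observing that the surjection $\hat\pi$ carries $\soc(\hat A)$ into $\soc(\tilde T)=\F\,t_{soc}\xi^{n-1}$; both reach the same crux.
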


\begin{proof}
	For (1.), observe that $\beta\colon A\rightarrow \hat{A}$ is injective if and only if $\left(\xi\cdot K,f_A(\xi)\right)\cap \beta(A)=0$, which by Lemma \ref{lem:Kcap} is equivalent to $a_n\cdot K=0$, which is in turn, equivalent to $a_n=\lambda\cdot\tau$ for some $\lambda\in\F$ by Lemma \ref{lem:Thomdual}.
	
	For (2.), assume first that $\hat{A}$ is Gorenstein.  Then $\operatorname{soc}(\hat{A})=\hat{A}_e$ where $e$ is the largest integer for which $\hat{A}_e\neq 0$.  It follows from Lemma \ref{lem:decomp} that $e\leq d$ and that $e<d$ if and only if $\beta(A_d)=0$.  But if $e<d$, then by the surjectivity of the algebra map $\hat{\pi}\colon \hat{A}\rightarrow\tilde{T}$ we must have $\hat{\pi}(\hat{A}_{d-1})=\tilde{T}_{d-1}\neq 0$, and hence $0\neq \hat{A}_{d-1}=\operatorname{soc}(\hat{A})$ and $e=d-1$. By Lemma \ref{lem:soclefits},  $\hat{\pi}$ must be an isomorphism, i.e. 
	$$\hat{\pi}\colon\hat{A}=\frac{A[\xi]}{(\xi\cdot K,f_A(\xi))}\overset{\cong}{\rightarrow}\frac{A[\xi]}{(K,f_A(\xi))}=\tilde{T}.$$
	In particular, we see that $K\subseteq (\xi\cdot K,f_A(\xi))$ in $A[\xi]$, which by Lemma \ref{lem:Kcap} implies that $K\subseteq a_n.K$, which is impossible for degree reasons (since we are assuming $d>k$).  Therefore we must have $\operatorname{soc}(\hat{A})=\hat{A}_d\neq 0$, which by Lemma \ref{lem:decomp} must be the image of $A_d$, and hence $\beta(a_{soc})\neq 0$.  By (1) this implies that $a_n=\lambda\cdot \tau$ for some $\lambda\in\F$.  Next we claim that $\lambda\neq 0$.  Indeed assume that $\lambda=0$ so that $a_n=0$.  Let $b_0\in A$ be any $\pi$-lift of $t_{soc}\in T_k$.  Then note that $\xi^{n-1}\beta(b_0)\in \operatorname{soc}(\hat{A})$.  Indeed in $A[\xi]$ we have  $\xi\cdot \left(\xi^{n-1}b_0\right)-b_0 f_A(\xi)\in\left(\xi\cdot K\right)$, hence $\xi\cdot \left(\xi^{n-1}b_0\right)\in (\xi\cdot K,f_A(\xi))$, but also for any $b\in A$ of positive degree $b_0\cdot b\in K$ and hence $b\cdot \left(\xi^{n-1}\cdot b_0\right)\in\left(\xi\cdot K\right)$ as well.  We further claim that $\left\{\xi^{n-1}\cdot \beta(b_0), \beta(a_{soc})\right\}$ are linearly independent in $\hat{A}$.  Indeed, $\hat{\pi}(\xi^{n-1}\cdot \beta(b_0))=\xi^{n-1}t_{soc}$ is a socle generator of  $\tilde{T}$ (hence is non-zero), whereas $\hat{\pi}(\beta(a_{soc}))=0$.  Since $\beta(a_{soc})\neq 0$, this shows that the socle of $\hat{A}$ has dimension at least two, contradicting our assumption that $\hat{A}$ is Gorenstein.
	
	Conversely assume that $a_n=\lambda\cdot \tau$ for some non-zero $\lambda\in\F^\times$.  Then by (1.)  $\beta\colon A\rightarrow \hat{A}$ is injective, and hence $\beta(a_{soc})\in\operatorname{soc}(\hat{A})$.  We want to show that $\beta(a_{soc})$ generates the socle.  To that end, suppose that $\hat{a}\in \hat{A}$ is any other socle element, and as in Lemma \ref{lem:decomp} write
	\begin{equation}
	\label{eq:hata}
	\hat{a}=\beta(b_0)+\beta(b_1)\xi+\cdots+\beta(b_{n-1})\xi^{n-1}.
	\end{equation}
	Note that since $\hat{\pi}$ is surjective, it must map socle elements of $\hat{A}$ to socle elements of $\tilde{T}$.  Thus applying $\hat{\pi}$ to \eqref{eq:hata} we find that there is a constant $c$ (possibly $c=0$) such that 
	$$\hat{\pi}(\hat{a})=\pi(b_0)+\pi(b_1)\xi+\cdots+\pi(b_{n-1})\xi^{n-1}=c\cdot t_{soc}\cdot \xi^{n-1}.$$
	Since $\tilde{T}$ is a free $T$-module with basis $\{1,\xi,\ldots,\xi^{n-1}\}$, it follows that $\pi(b_i)=0$ in $T$ for $0\leq i\leq n-2$ and $\pi(b_{n-1})=c\cdot t_{soc}$.  This implies that $b_0,\ldots,b_{n-2}\in K$, and therefore that $\beta(b_i)\xi^i=0$ in $\hat{A}$ for $i=1,\ldots,n-2$.  Thus $\hat{a}$ reduces to  
	$$\hat{a}=\beta(b_0)+\beta(b_{n-1})\xi^{n-1}.$$
	Since $\hat{a}\in\operatorname{soc}(\hat{A})$ we have $\xi\cdot\hat{a}=0$ in $\hat{A}$ and therefore
	\begin{align*}
	0= \xi\cdot \hat{a}= & \xi\cdot\beta(b_0)+\beta(b_{n-1})\xi^n\\
	& (\text{since} \ b_0\in K, \ \text{and} \ \xi^n\equiv -\left(a_1\xi^{n-1}+\cdots+a_n\right))\\
	\equiv  & -\beta(b_{n-1})\cdot \left(a_1\xi^{n-1}+\cdots+a_{n-1}\xi+a_n\right)\\
	& (\text{since} \ \pi(b_{n-1})=c\cdot t_{soc}, \ \text{and} \ a_i\in A_i \ \text{for} \ 1\leq i\leq n-1)\\
	\equiv & \beta(b_{n-1}) \cdot a_n\\
	& (\text{since} \ b_{n-1}=c\cdot t_{soc} \ \text{and} \ a_n=\lambda\cdot \tau)\\
	\equiv  & \lambda\cdot c\cdot \beta(a_{soc}).  
	\end{align*}
	Since $\lambda\neq 0$ and $\beta(a_{soc})\neq 0$ we must conclude that $c=0$ and hence $b_{n-1}\in K$ as well, and therefore that $\hat{a}=\beta(b_0)$.  But since $\beta(b_0)=\hat{a}\in\operatorname{soc}(\hat{A})$ and $\beta$ is injective, it  follows that $b_0\in\operatorname{soc}(A)$, as desired.  Therefore $\operatorname{soc}(\hat{A})\subseteq \beta(\operatorname{soc}(A))$, from which it follows that $\hat{A}$ must be Gorenstein.	
\end{proof}

\begin{corollary}
	\label{cor:splitseq}
	If $a_n=\lambda\cdot \tau$ with $\lambda\in \F$ then 
	\[
	H(\hat{A})=H(A)+H(T)[1]+\cdots+H(T)[n-1]
	\]
	 and there is a split exact sequence of $A$-modules 
	\[
	\xymatrix{0\ar[r] & A \ar[r]^-{\beta} &  \hat{A} \ar[r]^-{\hat{\pi}} &  \tilde{T}/\beta_0(T)\ar[r] &  0.}	\]
\end{corollary}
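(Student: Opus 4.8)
The plan is to derive both assertions from three facts already established: Theorem~\ref{thm:hatAGor}(1) (injectivity of $\beta$ under the hypothesis $a_n=\lambda\tau$), Lemma~\ref{lem:decomp} (the $\xi$-adic decomposition of elements of $\hat A$ and the resulting Hilbert-function formula~\eqref{eq:HhatA}), and Lemma~\ref{lem:freeExt} ($\tilde T$ is $T$-free on $\{1,\xi,\dots,\xi^{n-1}\}$). The Hilbert function is then immediate: since $a_n=\lambda\tau$, Theorem~\ref{thm:hatAGor}(1) gives that $\beta\colon A\to\hat A$ is injective, so $H(\beta(A))=H(A)$, and substituting this into Equation~\eqref{eq:HhatA} yields $H(\hat A)=H(A)+H(T)[1]+\cdots+H(T)[n-1]$.

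For the short exact sequence, I would first record that it is a sequence of $A$-modules and $A$-module maps: $\hat A$ is an $A$-module via $\beta$, while $\tilde T$ and $\tilde T/\beta_0(T)$ are $A$-modules via $\beta_0\circ\pi$, and the arrow labeled $\hat\pi$ is the composite of $\hat\pi\colon\hat A\to\tilde T$ with the canonical surjection $\tilde T\to\tilde T/\beta_0(T)$; commutativity of~\eqref{eq:cd}, i.e. $\hat\pi\circ\beta=\beta_0\circ\pi$, makes all these maps $A$-linear. Injectivity of $\beta$ is Theorem~\ref{thm:hatAGor}(1), and surjectivity of the composite onto $\tilde T/\beta_0(T)$ follows from surjectivity of $\hat\pi$. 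Exactness in the middle is the one step requiring care. That $\beta(A)$ lies in the kernel follows because $\hat\pi\circ\beta=\beta_0\circ\pi$ has image inside $\beta_0(T)$. Conversely, given $\hat a$ in the kernel, write $\hat a=\beta(b_0)+\beta(b_1)\xi+\cdots+\beta(b_{n-1})\xi^{n-1}$ with $\pi(b_i)=s_i$ as in Lemma~\ref{lem:decomp}, so that $\hat\pi(\hat a)=\beta_0(\pi(b_0))+s_1\xi+\cdots+s_{n-1}\xi^{n-1}$; membership of this element in $\beta_0(T)$ forces $s_1=\cdots=s_{n-1}=0$ because $\{1,\xi,\dots,\xi^{n-1}\}$ is a $T$-basis of $\tilde T$ by Lemma~\ref{lem:freeExt}. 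Then $b_i\in K$ for each $i\ge 1$, so $\beta(b_i)\xi^i\in(\xi\cdot K)$ vanishes in $\hat A$, leaving $\hat a=\beta(b_0)\in\beta(A)$. Hence the kernel is exactly $\beta(A)$ and the sequence is exact.

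To split it, I would construct an $A$-linear retraction $\rho\colon\hat A\to A$. By the uniqueness clause of Lemma~\ref{lem:decomp}, every $\hat a\in\hat A$ has a well-defined ``constant part'' $\beta(b_0)$ with $b_0\in A$ uniquely determined (using again that $\beta$ is injective); set $\rho(\hat a)=b_0$. Then $\rho\circ\beta=\mathrm{id}_A$ trivially, and $\rho$ is $A$-linear because for $a\in A$ the element $\beta(a)\hat a=\beta(ab_0)+\sum_{i\ge 1}\beta(ab_i)\xi^i$ is again a decomposition of Lemma~\ref{lem:decomp} type, so its constant part is $\beta(ab_0)$, giving $\rho(\beta(a)\hat a)=ab_0=a\,\rho(\hat a)$. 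Since $\rho$ is moreover degree-preserving, it splits the sequence in the category of graded $A$-modules. (Equivalently one may exhibit the $A$-linear section $\tilde T/\beta_0(T)\to\hat A$ sending the unique representative $\sum_{i\ge 1}t_i\xi^i$ of a class to $\sum_{i\ge 1}\beta(b_i)\xi^i$ for any $\pi$-lifts $b_i$ of $t_i$; this is well defined because $\xi$ annihilates $K$ in $\hat A$ and $i\ge 1$.)

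The main obstacle is the middle-exactness computation, where one must translate ``$\hat\pi(\hat a)\in\beta_0(T)$'' into vanishing of the coefficients $s_1,\dots,s_{n-1}$ — this is precisely where the freeness of $\tilde T$ over $T$ (Lemma~\ref{lem:freeExt}) and the relation $\beta(b_i)\xi^i=0$ for $b_i\in K$, $i\ge1$, come into play. The Hilbert-function identity, injectivity and surjectivity at the two ends, and the construction and $A$-linearity of the splitting are all routine once Theorem~\ref{thm:hatAGor}(1) and Lemmas~\ref{lem:decomp} and~\ref{lem:freeExt} are invoked.
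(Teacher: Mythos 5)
Your proof is correct, but it takes a genuinely different route from the paper's in the two non-routine steps. For exactness in the middle, the paper's proof is a dimension count: having shown the sequence is a complex with $\beta$ injective and $\hat\pi$ surjective, it invokes Lemma~\ref{lem:decomp} to obtain $H(\hat A)=H(\beta(A))+H(T)[1]+\cdots+H(T)[n-1]=H(A)+H(\tilde T/\beta_0(T))$, and exactness follows from additivity of dimensions. You instead do an element-wise computation, taking $\hat a$ in the kernel, expanding it via Lemma~\ref{lem:decomp}, and using freeness of $\tilde T$ over $T$ (Lemma~\ref{lem:freeExt}) together with $\xi\cdot K=0$ to force $\hat a\in\beta(A)$; this is more constructive and avoids the Hilbert-function bookkeeping, at the cost of being slightly longer. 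For the splitting, the paper simply appeals to the fact that an Artinian Gorenstein ring $A$ is injective as an $A$-module, so the monomorphism $\beta$ splits automatically. You instead exhibit an explicit $A$-linear retraction $\rho\colon\hat A\to A$ (the ``constant term'' map from Lemma~\ref{lem:decomp}), verify $\rho\circ\beta=\mathrm{id}_A$ and $A$-linearity directly, and also describe the corresponding section. Your construction buys an explicit, degree-preserving splitting without invoking self-injectivity of Gorenstein rings, and is useful if one later wants the splitting in hand; the paper's one-line appeal to injectivity is quicker. Both arguments are sound and rest on the same three ingredients (Theorem~\ref{thm:hatAGor}(1), Lemma~\ref{lem:decomp}, Lemma~\ref{lem:freeExt}), so the difference is purely one of method.
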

\begin{proof}
	Injectivity for $\beta$ follows from Theorem \ref{thm:hatAGor} part (1) and surjectivity of $\hat{\pi}$ is by definition. That the displayed sequence is a complex follows from the commutative diagram \eqref{eq:cd}. Exactness of the sequence above viewed as a sequence of vector spaces follows from Lemma \ref{lem:decomp} and the identity
	\[
	H(\hat{A})=H(\beta(A))+H(T)[1]+\cdots+H(T)[n-1]=H(A)+H(\tilde{T}/\beta_0(T)).
	\]
	Finally, since $A$ is a Gorenstein ring, $A$ is injective as an $A$-module, thus the above sequence splits.
\end{proof}

\begin{remark}
\label{rem:fakeG}
It follows from Theorem \ref{thm:hatAGor} and Corollary \ref{cor:splitseq}, that if $a_n=0$ then $\beta\colon A\rightarrow \hat{A}$ is injective and 
		$$H(\hat{A})=H(A)+H(T)[1]+\cdots+H(T)[n-1],$$
but $\hat{A}$ is not Gorenstein.  We call such algebras \emph{boundary Gorenstein algebras}, indicating that they are in the closure of the Gorenstein locus of the Hilbert scheme; the algebra $\hat{A}$ in Example \ref{ex:notGor}(3.) is a boundary Gorenstein algebra of this type.  More precisely, one can show that if $\F$ is algebraically closed then the algebra 
$$\tilde{A}[\lambda]=\frac{A[\xi,\lambda]}{(\xi\cdot K, \xi^n+a_1\xi^{n-1}+\cdots+\lambda\cdot\tau)}$$
is flat as a module over $\F[\lambda]$, where the fibers $\tilde{A}[c]/(\lambda-c)\cdot \tilde{A}[\lambda]$ are Gorenstein if $c\neq 0$ and not Gorenstein, but boundary Gorenstein, if $c=0$.  We shall give another flat family in which the Gorenstein algebra $\hat{A}$ is a general fiber in Section \ref{Lefsec}. 
\end{remark}

\begin{definition}[Preferred Orientations]
\label{def:orientations}
By Lemma \ref{lem:FreeExt}, $\tilde{T}$ from Construction~\ref{con:hat} is always Gorenstein, and we define its preferred orientation as the one corresponding to the socle generator, $\hat{t}_{soc}=\xi^{n-1}\cdot t_{soc}$.
If $\hat{A}$ from Construction \ref{con:hat} is Gorenstein, then we define its preferred orientation as the one corresponding to the socle generator $\hat{a}_{soc}=\beta(a_{soc})$; hence the preferred orientation on $\hat{A}$ is the one inherited from $A$ via $\beta$.
\end{definition}

\begin{definition}[Cohomological Blow-Up, Exceptional Divisor, Cohomological Blow Down]
	\label{def:blowup}
	 Given oriented AG algebras $A$ and $T$ of socle degrees $d>k$, respectively, and surjective degree-preserving algebra map $\pi\colon A\rightarrow T$ with Thom class $\tau\in A_n$ where $n=d-k$, and given a homogeneous monic polynomial $f_A(\xi)=\xi^n+a_1\xi^{n-1}+\cdots+a_n\in A[\xi]$ of degree $n$ with homogeneous elements $a_i\in A_i$ for $1\leq i\leq n$ and with $a_n=\lambda\cdot \tau$ for some non-zero constant $\lambda$, and setting $t_i=\pi(a_i)$ for $1\leq i\leq n-1$, we call the corresponding oriented AG algebra from Construction \ref{con:hat}, Equation \eqref{eq:hatA} the \emph{cohomological blow up of $A$ along $\pi$ with parameters $(t_1,\ldots,t_{n-1},\lambda)$}, or BUG for short, and write
\[
	\tilde{A}=(\hat{A}=)\frac{A[\xi]}{(\xi \cdot K,\underbrace{\xi^n+a_1\xi^{n-1}+\cdots+\lambda\cdot\tau}_{f_A(\xi)})}.
\]
	 with its preferred orientation $\tilde{a}_{soc}=a_{soc}$.
	 The oriented AG algebra from Construction \ref{con:hat}, Equation \eqref{eq:hatT} 
	 \[
	 \tilde{T}=(\tilde{T}=)\frac{T[\xi]}{(\underbrace{\xi^n+t_1\xi^{n-1}+\cdots+\lambda\cdot \pi(\tau))}_{f_T(\xi)}}
	 \]
	 with its preferred orientation $\tilde{t}_{soc}=\xi^{n-1}\cdot t_{soc}$ is called the \emph{exceptional divisor of $T$ with parameters $(t_1,\ldots,t_{n-1},\lambda)$}.  
	 In this case we refer to $A$ as the \emph{cohomological blow down of $\hat{A}$ along $\hat{\pi}$}.
\end{definition}

\begin{remark}
	\label{rem:lambda}
One can force $\lambda=1$ in Definition \ref{def:blowup} by scaling orientations on either $A$ or $T$.  Specifically, given $\pi\colon A\rightarrow T$ with Thom class $\tau$, take new distinguished socle generator either $a_{soc}'=\lambda\cdot a_{soc}$ or $t_{soc}'=\lambda^{-1}\cdot t_{soc}$ so that the same map with one of these scaled orientations $\pi'\colon A'\rightarrow T'$ will have Thom class $\tau'=\lambda\cdot \tau$, and hence $a_n=\lambda\cdot\tau=\tau'$. 

If $\lambda\in\F$ has an $n^{th}$-root, say $\mu$ (e.g. if $\F$ is algebraically closed), then one can also force $\lambda=1$ by rescaling the parameters in the cohomological blow.  Specifically if $\tilde{A}$ is the cohomological blow up of $A$ along $\pi\colon A\rightarrow T$ with parameters $(t_1,\ldots,t_{n-1},\lambda)$, then $\tilde{A}$ is isomorphic to the the cohomological blow up of $A$ along $\pi$ with parameters $(t'_1,\ldots,t'_{n-1},1)$ where $t'_i=\mu^i\cdot t_i$ via the map $\xi\mapsto\mu\cdot \xi$.  
\end{remark}

The following example shows that the hypothesis on $\F$ in Remark \ref{rem:lambda} is necessary.

\begin{example}\label{ex:lambda}
	Let $A=\Q[x]/(x^3)$ and $T=\Q=\F$ with $\pi\colon A\rightarrow T$ the natural projection having $\ker(\pi)=(x)$ and Thom class $\tau=x^2$.  Taking $f_A(\xi)=\xi^2+\lambda x^2$ with $\lambda\in \Q^\times$,  the cohomological blow-up algebra of $A$ along $\pi$ with parameters $(0,\lambda)$ is 
	\[
	\tilde{A}(\lambda)=\frac{\Q[x,\xi]}{(x\xi, \xi^2+\lambda x^2)}.
	\]
We claim that any pair of integers $p,q$ such that $p$ is a prime that does not divide $q$ yield non isomorphic algebras $\tilde{A}(p)$ and $\tilde{A}(q)$. Indeed, assume that there is a $\Q$-algebra isomorphism  $\psi:\tilde{A}(q)\to \tilde{A}(p)$.  Then for some $a,b,c,d\in\Q$ we must have $\psi(x)=ax+b\xi$ and $\psi(\xi)=cx+d\xi$. Moreover we may assume that $a,b,c,d $ are integers such that $\operatorname{gcd}(a,b,c,d)=1$ because $\psi$ is an isomorphism if and only if $\alpha\cdot\psi$ is an isomorphism for any $\alpha\in\Q^\times$. We have
\[
\psi((x\xi, \xi^2+q x^2))=(acx^2+(ad+bc)x\xi+bd\xi^2, (c^2+qa^2)x^2+2(cd+qab)x\xi+(d^2+qb^2)\xi^2).
\]
In order for $\psi$ to be a $\Q$-algebra isomorphism the ideal above must be equal to $(x\xi, \xi^2+px^2)$ and equating the two ideals in $\Q[x,\xi]/(x\xi)$ yields the following equations 
	\begin{align}
	\label{eq:1}
	ac &= pbd\\
	\label{eq:2}
	c^2+qa^2 &= p(d^2+qb^2) .
	\end{align}
It follows from \eqref{eq:1} that $p$ divides $a$ or $c$, and hence from  \eqref{eq:2} that $p$ divides both $a$ and $c$.  Returning to  \eqref{eq:1}, we now deduce that $p$ divides $bd$ and hence $p$ divides $b$ or $d$.  Similarly we must also have $p\mid (d^2+qb^2)$ which implies that $p$ must divide $b$ and $d$.  Therefore we must have $p$ divides $a$, $b$, $c$, and $d$, contradicting our assumption that $\operatorname{gcd}(a,b,c,d)=1$.  

In particular, we have shown that every prime $p$ gives cohomological blow-up of $A$ along $\pi$ with parameters $(0,p)$ which is not isomorphic to the cohomological blow up algebra $\hat{A}(1)$ with parameters $(0,1)$.  In fact this shows that there are infinitely many distinct isomorphism classes of cohomological blow-up algebras $\tilde{A}(p)$ of $A$ along $\pi$ with parameters $(0,p)$, one for each prime $p$.

On the other hand, we shall see in Theorem \ref{thm:flatfam} that the algebras $\tilde{A}(\lambda^2)$ and $\tilde{A}(1)$ are always isomorphic for any rational number $\lambda$. 
\end{example}

The next lemma gives the Thom class of the restriction map from the cohomological blow up to its exceptional divisor.
\begin{lemma}
	\label{lem:Thomhat}
	With notations as in Definition \ref{def:blowup}, if $\tilde{A}$  is the cohomological blow up of $A$ along $\pi\colon A\rightarrow T$ with parameters $(t_1,\ldots,t_{n-1},\lambda)$, and $\tilde{T}$ is its exceptional divisor then the Thom class of the projection map $\tilde{\pi}\colon\tilde{A}\rightarrow\tilde{T}$ is  
	$$\tilde{\tau}=-\lambda^{-1}\xi.$$
\end{lemma}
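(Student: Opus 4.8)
The plan is to invoke Lemma~\ref{lem:Thomdual}. Since $\tilde\pi\colon\tilde A\to\tilde T$ is surjective it is a restriction map, so its Thom class $\tilde\tau$ is the unique-up-to-scalar homogeneous element of $\tilde A$ of degree $d-\tilde k$ annihilating $\ker\tilde\pi$, where $\tilde k$ is the socle degree of $\tilde T$. By Lemma~\ref{lem:freeExt}, $\tilde T$ is a free extension of $T$ with fiber $\F[\xi]/(\xi^n)$, so $\tilde k=k+(n-1)=d-1$ and hence $\tilde\tau\in\tilde A_1$. Thus the argument splits into two steps: first show that $\xi$ annihilates $\ker\tilde\pi$, which forces $\tilde\tau=\mu\xi$ for some $\mu\in\F^\times$; then pin down $\mu=-\lambda^{-1}$ by evaluating the defining relation~\eqref{eq:Thom} on a single well-chosen element.

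For the first step I would identify $\ker\tilde\pi$ explicitly. From the split exact sequence of Corollary~\ref{cor:splitseq}, $0\to A\xrightarrow{\beta}\tilde A\to\tilde T/\beta_0(T)\to 0$, we get $\ker\tilde\pi\subseteq\beta(A)$; and because the square~\eqref{eq:cd} commutes and $\beta_0$ is injective (Lemma~\ref{lem:freeExt}), for $a\in A$ one has $\tilde\pi(\beta(a))=\beta_0(\pi(a))=0$ exactly when $a\in K$. Hence $\ker\tilde\pi=\beta(K)$, and then $\xi\cdot\beta(K)=0$ in $\tilde A$ since $\xi\cdot K$ is one of the defining relations of $\tilde A$. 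As $\xi\neq 0$ in $\tilde A$ (it maps to the nonzero element $\xi$ of the free $T$-module $\tilde T$, using $n\ge 2$), Lemma~\ref{lem:Thomdual} now yields $\tilde\tau=\mu\xi$ with $\mu\in\F^\times$.

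For the second step, pick $b_0\in A_k$ with $\pi(b_0)=t_{soc}$ (possible since $\pi$ is surjective in each degree) and apply~\eqref{eq:Thom} to $\tilde a=\xi^{n-1}\beta(b_0)\in\tilde A_{d-1}$. On the target side $\tilde\pi(\tilde a)=\xi^{n-1}t_{soc}=\tilde t_{soc}$ is the preferred socle generator of $\tilde T$, so $\int_{\tilde T}\tilde\pi(\tilde a)=1$. On the source side $\int_{\tilde A}\tilde\tau\cdot\tilde a=\mu\int_{\tilde A}\xi^n\beta(b_0)$, and here is the one computation that matters: in $\tilde A$ we have $\xi^n\equiv-(a_1\xi^{n-1}+\cdots+a_{n-1}\xi+\lambda\tau)$, while for $1\le i\le n-1$ the product $a_ib_0$ lies in $K$ because $\pi(a_ib_0)=t_i\,t_{soc}\in T_{k+i}=0$, whence $\beta(a_ib_0)\,\xi^{n-i}=(\beta(a_ib_0)\,\xi)\,\xi^{n-i-1}=0$ in $\tilde A$. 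Therefore $\xi^n\beta(b_0)\equiv-\lambda\,\beta(\tau b_0)$, and since $\tau b_0\in A_d$ and the preferred orientation on $\tilde A$ is inherited from $A$ (so $\int_{\tilde A}\beta(x)=\int_A x$ for $x\in A_d$), we get $\int_{\tilde A}\xi^n\beta(b_0)=-\lambda\int_A\tau b_0=-\lambda\int_T\pi(b_0)=-\lambda$, the middle equality being the defining property of the Thom class $\tau$ of $\pi$. Comparing the two sides, $1=-\mu\lambda$, so $\mu=-\lambda^{-1}$ and $\tilde\tau=-\lambda^{-1}\xi$.

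The only delicate point is the vanishing of the lower-order terms $a_i\xi^{n-i}$ for $1\le i\le n-1$ inside $\tilde A$; this is forced by the degree bookkeeping $\pi(a_i)\in T_{\ge 1}$ together with $\pi(b_0)\in\soc(T)$, which isolates precisely the $a_n=\lambda\tau$ term. Everything else is routine manipulation with the free $T$-module structure of $\tilde T$ and the preferred orientations. (One can also bypass Lemma~\ref{lem:Thomdual} and verify~\eqref{eq:Thom} directly for an arbitrary homogeneous $\tilde a\in\tilde A_{d-1}$ expanded in the basis $\{1,\xi,\dots,\xi^{n-1}\}$ of Lemma~\ref{lem:decomp}, using that $\beta(b)\,\xi=0$ whenever $\deg b>k$; the core computation is the same.)
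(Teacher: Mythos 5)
Your proof is correct and is essentially the same argument as the paper's: both proofs rest on the single computation that in $\tilde A$ one has $\xi\cdot\xi^{n-1}\beta(b_0)=\xi^n\beta(b_0)\equiv -\lambda\,\beta(\tau b_0)=-\lambda\,\tilde a_{soc}$ for a chosen lift $b_0$ of $t_{soc}$, with the lower-order terms $a_i\xi^{n-i}\beta(b_0)$ vanishing because $a_ib_0\in K$ and $\xi\cdot K=0$. The only difference is organizational: you first invoke Lemma~\ref{lem:Thomdual} (after identifying $\ker\tilde\pi=\beta(K)$) to pin down $\tilde\tau$ as a nonzero scalar multiple of $\xi$ and then determine the scalar by evaluating $\int_{\tilde A}\tilde\tau\cdot\tilde a=\int_{\tilde T}\tilde\pi(\tilde a)$ on $\tilde a=\xi^{n-1}\beta(b_0)$, whereas the paper works directly from the decomposition of $\tilde A_{d-1}$ given by Lemma~\ref{lem:decomp} and reads off the Thom class from the action of $\times\xi$ on that graded piece. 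Both are valid; your version is slightly more modular in citing Lemma~\ref{lem:Thomdual} explicitly. (The paper's proof also remarks that the case $k=d-1$, i.e.\ $n=1$, gives $\tilde A=A$ and $\tilde T=T$ trivially; you assume $n\ge 2$, which matches the standing hypothesis of the paper but is worth noting.)
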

\begin{proof}
	The socle degree of $\tilde{T}$ is $d-1$, and it follows from Lemma \ref{lem:decomp} that the graded component $\tilde{A}_{d-1}$ has a $\F$-vector space decomposition as
	$$\F\cdot a_T\xi^{n-1}\oplus A_{d-1}, \ \ \text{where} \ \ \pi(a_T)=t_{soc}\in T_k.$$
	We may assume that $k<d-1$, since if $k=d-1$ we clearly have $\tilde{A}=A$ and $\tilde{T}=T$.  Then for each $a\in A_{d-1}$ we have $a\in K$, hence $\xi\cdot a\equiv 0$ in $\tilde{A}$.  Also we have 
	\begin{align*}
	\xi\cdot a_T\xi^{n-1} &\equiv  a_T\xi^n\\
	&\equiv  a_T\left(-\left(a_1\xi^{n-1}+\cdots+a_n\right)\right)\\
	&\quad \text{since} \ a_i\in A_+ \ \Rightarrow \ a_T\cdot a_i\in K\\
	&\equiv a_T\cdot (-a_n) \equiv a_T\cdot (-\lambda\cdot \tau)=-\lambda\cdot a_{soc}=-\lambda\cdot \tilde{a}_{soc},
	\end{align*}
from which it follows that $\xi=-\lambda\cdot\tilde{\tau}$, as desired. 
\end{proof}

The following theorem gives a useful characterization of the blow-up algebra in terms of some universal properties, analogous to Equations \eqref{eq:cMaps}, \eqref{eq:HYtild}, and \eqref{eq:SES} from the Introduction. 

\begin{theorem}
	\label{lem:buchar}
	Suppose that we are given oriented AG algebras $\left(A,\int_A\right)$, $\left(T,\int_T\right)$, $\left(\tilde{A},\int_{\tilde{A}}\right)$, and $\left(\tilde{T},\int_{\tilde{T}}\right)$ with socle degrees $d$, $k$, $d$, and $d-1$, respectively with $d>k$, and surjective degree preserving algebra maps $\pi\colon A\rightarrow T$ and $\tilde{\pi}\colon \tilde{A}\rightarrow \tilde{T}$.  Then $\tilde{A}$ is a cohomological blow up of $A$ along $\pi$ for some parameters $(t_1,\ldots,t_{n-1},\lambda)$, and $\tilde{T}$ is its exceptional divisor if and only if the following conditions are satisfied:
	\begin{enumerate}
		\item There are degree preserving algebra maps $\beta\colon A\rightarrow \hat{A}$ and $\beta_0\colon T\rightarrow \tilde{T}$ making the following diagram commute
		$$\xymatrix{A\ar[r]^-{\beta}\ar[d]_-{\pi} & \hat{A}\ar[d]^-{\tilde{\pi}}\\ T\ar[r]_-{\beta_0} & \tilde{T}}$$
		
		\item The Euler class $\epsilon=\tilde{\pi}(\tilde{\tau})\in \tilde{T}_1$  generates $\tilde{T}$ as a $T$-algebra (via $\beta_0$) with a single homogeneous relation in degree $n=d-k$:
		$$\epsilon^n+\beta_0(t'_1)\epsilon^{n-1}+\cdots+\beta_0(t'_n)\equiv 0$$
		for some homogeneous elements $t'_i\in T_i$ for $1\leq i\leq n$.
		
		\item There is a short exact sequence of $A$-modules
		$$\xymatrix{0\ar[r] & A\ar[r]^-{\beta} & \tilde{A}\ar[r]^-{\tilde{\pi}} & \tilde{T}/\beta_0(T)\ar[r] & 0}$$
	\end{enumerate}
\end{theorem}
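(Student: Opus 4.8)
\emph{Plan.} I would prove the two implications separately. The forward direction just collects facts already proved; the reverse direction is the substantial one, and amounts to reconstructing a blow-up presentation of $\tilde A$ out of the abstract data (1)--(3). Throughout I may assume $n:=d-k\ge 2$, since when $n=1$ condition (3) already forces $\tilde A\cong A$ and $\tilde T\cong T$, realizing $\tilde A$ as the trivial cohomological blow up with parameter $\lambda=1$. For the forward implication, suppose $\tilde A$ is the cohomological blow up of $A$ along $\pi$ with parameters $(t_1,\dots,t_{n-1},\lambda)$ and $\tilde T$ its exceptional divisor. Then (1) is the commutative square \eqref{eq:cd} built right after Construction \ref{con:hat}, and (3) is the split exact sequence of Corollary \ref{cor:splitseq}. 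For (2): Lemma \ref{lem:Thomhat} identifies the Thom class of $\tilde\pi$ as $\tilde\tau=-\lambda^{-1}\xi$, so its Euler class is $\epsilon=\tilde\pi(\tilde\tau)=-\lambda^{-1}\xi$; by Lemma \ref{lem:freeExt} the powers $1,\xi,\dots,\xi^{n-1}$, hence also $1,\epsilon,\dots,\epsilon^{n-1}$, form a $\beta_0(T)$-basis of $\tilde T$, so $\epsilon$ generates $\tilde T$ as a $T$-algebra; substituting $\xi=-\lambda\epsilon$ into $f_T(\xi)=0$ and dividing by $(-\lambda)^n$ yields the single degree-$n$ relation $\epsilon^n+\beta_0(t'_1)\epsilon^{n-1}+\cdots+\beta_0(t'_n)=0$ with $t'_i=(-\lambda)^{-i}t_i\in T_i$.

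\emph{Reverse direction, preliminaries.} Assume (1)--(3) and set $K=\ker\pi$. First I would check $\beta_0$ is injective: otherwise its nonzero kernel, being an ideal of the AG algebra $T$, contains $\soc(T)=T_k$, so $\beta_0(T)$ has socle degree $\le k-1$; since $\tilde T$ is spanned over $\beta_0(T)$ by $1,\epsilon,\dots,\epsilon^{n-1}$ by (2), its socle degree would be $\le(k-1)+(n-1)=d-2$, contradicting the hypothesis. Let $\tilde\tau\in\tilde A_1$ be the Thom class of the restriction map $\tilde\pi$ (nonzero since $\tilde\pi$ is surjective, and of degree $d-(d-1)=1$) and $\epsilon=\tilde\pi(\tilde\tau)$. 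Lemma \ref{lem:Thomdual} applied to $\tilde\pi$ gives $\tilde\tau\cdot\ker\tilde\pi=0$, and since $\tilde\pi\beta(u)=\beta_0\pi(u)=0$ for $u\in K$ by (1), this yields the key vanishing $\tilde\tau\cdot\beta(K)=0$ in $\tilde A$. Now define $\Phi\colon A[\xi]\to\tilde A$ by $\Phi|_A=\beta$ and $\Phi(\xi)=\tilde\tau$; it is surjective, because given $\tilde a\in\tilde A$ one writes $\tilde\pi(\tilde a)=\sum_{i=0}^{n-1}\beta_0(s_i)\epsilon^i$ using (2), lifts the $s_i$ to $b_i\in A$ via $\pi$, and observes $\tilde a-\sum_i\beta(b_i)\tilde\tau^i\in\ker\tilde\pi\subseteq\beta(A)$ by exactness of the sequence in (3).

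\emph{Reverse direction, identifying the kernel and finishing.} The vanishing $\tilde\tau\cdot\beta(K)=0$ gives $\xi K\subseteq\ker\Phi$. Taking homogeneous lifts $a_i\in A_i$ of the coefficients $t'_i$ of (2) for $1\le i\le n$ and setting $f_A^{(0)}(\xi)=\xi^n+a_1\xi^{n-1}+\cdots+a_n$, one has $\tilde\pi\bigl(\Phi(f_A^{(0)})\bigr)=\epsilon^n+\sum_i\beta_0(t'_i)\epsilon^{n-i}=0$, so $\Phi(f_A^{(0)})=\beta(w)$ for some $w\in A$ with $\beta_0\pi(w)=\tilde\pi\beta(w)=0$, whence $w\in K_n$ (using $\beta_0$ injective); replacing $a_n$ by $a_n-w$ gives a monic $f_A(\xi)$ with the same images $\pi(a_i)=t'_i$ and with $\Phi(f_A(\xi))=0$. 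Thus $\Phi$ factors through a surjection $\bar\Phi\colon\hat A:=A[\xi]/(\xi K,f_A(\xi))\twoheadrightarrow\tilde A$. Expanding $\beta(a_n)=-\bigl(\tilde\tau^n+\sum_{i<n}\beta(a_i)\tilde\tau^{n-i}\bigr)$ and multiplying by $\beta(u)$ for $u\in K$, every term dies by $\tilde\tau\cdot\beta(K)=0$, so $\beta(a_nu)=0$ and hence $a_nu=0$ since $\beta$ is injective (it sits in the exact sequence (3)); therefore $a_n\cdot K=0$ and $a_n=\lambda\tau$ for some $\lambda\in\F$ by Lemma \ref{lem:Thomdual}. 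If $\lambda=0$, the argument in the proof of Theorem \ref{thm:hatAGor}(2) produces a nonzero socle element of $\hat A$ in degree $d-1$, namely $\xi^{n-1}\beta(b_0)$ for $b_0$ a $\pi$-lift of $t_{soc}$; a surjection of graded algebras carries socle into socle, so its image $\tilde\tau^{n-1}\beta(b_0)$ --- nonzero because $\tilde\pi$ sends it to the socle generator $\epsilon^{n-1}\beta_0(t_{soc})$ of $\tilde T$ --- would be a nonzero element of the socle $\tilde A_d$ living in degree $d-1$, which is absurd. So $\lambda\ne0$, $\hat A$ is Gorenstein by Theorem \ref{thm:hatAGor}(2), and $\bar\Phi$ is a surjection of graded AG algebras of socle degree $d$, hence an isomorphism by Lemma \ref{lem:soclefits}. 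This exhibits $\tilde A$ as the cohomological blow up of $A$ along $\pi$ with parameters $(t'_1,\dots,t'_{n-1},\lambda)$; and since $\pi(a_n)=t'_n=\lambda\pi(\tau)$, the isomorphism $\tilde T\cong T[\xi]/\bigl(\xi^n+t'_1\xi^{n-1}+\cdots+t'_{n-1}\xi+\lambda\pi(\tau)\bigr)$ furnished by (2) and the injectivity of $\beta_0$ identifies $\tilde T$ with the corresponding exceptional divisor. The given orientations on $\tilde A$, $\tilde T$ may differ from the preferred ones by scalars, which by Remark \ref{rem:orientations} merely rescales $\lambda$.

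\emph{Main obstacle.} The delicate stretch is the last one: building the monic polynomial $f_A$ so that $\Phi$ descends to $\hat A$, and then showing its constant term is a \emph{nonzero} scalar multiple of the Thom class $\tau$. This is exactly where all three hypotheses are used simultaneously --- (2) to control the coefficients and to get surjectivity of $\Phi$, (1) to tie $\tilde\pi\beta$ to $\beta_0\pi$, and (3) for the injectivity of $\beta$ and the containment $\ker\tilde\pi\subseteq\beta(A)$ --- with Lemma \ref{lem:Thomdual} doing the essential algebraic work. Everything else is bookkeeping with socle degrees and free extensions.
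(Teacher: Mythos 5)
Your proof is correct, and the overall architecture agrees with the paper's: both directions are handled by appealing to Corollary \ref{cor:splitseq}, Lemma \ref{lem:Thomhat}, and Lemma \ref{lem:freeExt} for the forward implication, and, for the converse, by constructing $\Phi\colon A[\xi]\to\tilde{A}$ with $\Phi|_A=\beta$, $\Phi(\xi)=\tilde\tau$, establishing surjectivity from (2) and (3), showing $\xi K\subseteq\ker\Phi$ via the Gorenstein pairing $\int_{\tilde A}\tilde\tau\beta(u)\tilde a=\int_{\tilde T}\beta_0(\pi(u))\tilde\pi(\tilde a)=0$, and adjusting the constant term of a monic degree-$n$ lift of the relation in (2) so that the resulting $f_A(\xi)$ lies in $\ker\Phi$. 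Where you diverge from the paper is in how you close: the paper compares the Hilbert function of $\tilde A'=A[\xi]/(\xi K, f_A(\xi))$, computed via Lemma \ref{lem:decomp}, against the Hilbert function of $\tilde A$ given by condition (3), concludes that the induced surjection $\bar\phi$ is an isomorphism, and then reads off Gorensteinness of $\tilde A'$ from that of $\tilde A$ (so that Theorem \ref{thm:hatAGor} forces $a_n=\lambda\tau$ with $\lambda\neq0$). You instead derive $a_nK=0$ directly by multiplying the relation $\Phi(f_A)=0$ by $\beta(u)$ for $u\in K$ and invoking $\tilde\tau\cdot\beta(K)=0$, use Lemma \ref{lem:Thomdual} to get $a_n=\lambda\tau$, rule out $\lambda=0$ by pushing the extraneous degree-$(d-1)$ socle element through the surjection, and only then conclude Gorensteinness and invoke Lemma \ref{lem:soclefits} for the isomorphism. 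Your route avoids the Hilbert-function bookkeeping and exposes more explicitly why the constant term must be a nonzero multiple of the Thom class; the paper's route avoids the socle-chasing and leans on the dimension count instead. Both are valid. Two small presentational notes: you correctly observe the need for injectivity of $\beta_0$ (used when concluding $w\in K_n$), which the paper leaves implicit; and your sign bookkeeping when modifying the constant term (replacing $a_n$ by $a_n-w$) is actually cleaner than the paper's line ``$f_A(\xi)=g_A(\xi)+a_n$,'' which as written has the wrong sign.
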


\begin{proof}
	Assume that $\left(\tilde{A},\int_{\tilde{A}}\right)$ is the cohomological blow up of $\left(A,\int_A\right)$ along $\pi$ and that $\left(\tilde{T},\int_{\tilde{T}}\right)$ is the exceptional divisor, with parameters $(t_1,\ldots, t_{n-1}, \lambda)$ from Definition \ref{def:blowup}, and their preferred orientations from Definition \ref{def:orientations}.  Then our discussion following Construction \ref{con:hat} shows condition (1.) is satisfied.  Also Lemma \ref{lem:Thomhat} shows that the Thom class of $\tilde{\pi}$ is $\tilde{\tau}=-\lambda^{-1}\xi$, and its Euler class is $\epsilon =\tilde{\pi}(-\lambda^{-1}\xi)$.  From the presentation of $\tilde{T}$ in \eqref{eq:hatT} we have  $f_T(\xi)=\xi^n+t_1\xi^{n-1}+\cdots+\lambda\cdot \pi(\tau)\equiv 0$ in $\tilde{T}$.  Therefore setting $t_i'=(-1)^i\lambda^{-i} t_i$ for $1\leq i\leq n-1$ yields $(-1)^n\lambda^n\left(\epsilon^{n}+t_1'\epsilon^{n-1}+\cdots+t_{n-1}'\epsilon+\lambda^{1-n}\cdot\tau\right)\equiv 0$ in $\tilde{T}$, which is (2.). Finally, Corollary \ref{cor:splitseq} implies condition (3.).
	
	Conversely, assume that conditions (1.), (2.), and (3.) hold.  Define an algebra map
	\begin{equation}
	\label{eq:map}
	\xymatrixrowsep{.1pc}\xymatrix{\phi\colon A[\xi]\ar[r] & \tilde{A}\\
	a\ar@{|->}[r] & \beta(a)\\
	\xi\ar@{|->}[r] & \hat{\tau}}
	\end{equation}
	where $\beta\colon A\rightarrow \tilde{A}$ is the map given by (1.), and $\tilde{\tau}$ is the Thom class of $\tilde{\pi}\colon \tilde{A}\rightarrow\tilde{T}$.  Then conditions (2.) and (3.) guarantee that $\phi$ is surjective.  Indeed by (2.), the quotient $\tilde{T}/\beta_0(T)$ is generated as an $A$ module by non-zero powers of the Euler class $\epsilon=\tilde{\pi}(\tilde{\tau})$, and hence by (3.) $\tilde{A}$ is generated as an $A$ module by $1$ and non-zero powers of the Thom class $\tilde{\tau}$.    Furthermore, note that the ideal generated by $\xi\cdot u$ for $u\in K=\ker(\pi)$ is contained in $\ker(\phi)$.  Indeed, $\phi(u\cdot\xi)=\beta(u)\cdot \tilde{\tau}$ and for any $\tilde{a}\in\tilde{A}$ we have 
	\begin{align*}
	\int_{\tilde{A}}\tilde{\tau}\cdot \beta(u)\cdot\tilde{a}a&= \int_{\tilde{T}}\tilde{\pi}(\beta(u)\cdot\tilde{a})\\
	&= \int_{\tilde{T}}\beta_0(\pi(u))\cdot\tilde{\pi}(\tilde{a})= 0
	\end{align*} 
	which implies that $\beta(u)\tilde{\tau}=\phi(u\cdot\xi)=0$ since $\tilde{A}$ is Gorenstein. 
	
	Consider the relation on the Euler class from (2.):
	$$\epsilon^n+\beta_0(t_1')\epsilon^{n-1}+\cdots+\beta_0(t_{n-1}')\epsilon+\beta_0(t_n')\equiv 0.$$
	For each $1\leq i\leq n-1$, let $a_i$ be any $\pi$-lift of $t_i'$ (which exists since $\pi$ is surjective), and set $g_A(\xi)=\xi^n+a_1\xi^{n-1}+\cdots+a_{n-1}\xi\in A[\xi]$.  Note that $\tilde{\pi}\left(\phi\left(g_A(\xi)\right)\right)=-\beta_0(t_n')\in\beta_0(T)$, and hence according to condition (3.), there exists $a_n\in A$ such that $\beta(a_n)=\phi(g_A(\xi))$ in $\tilde{A}$.  Then setting $f_A(\xi)=g_A(\xi)+a_n$, we see that $f_A(\xi)\in \ker(\phi)$ as well.  Thus we have shown the containment of ideals $(\xi\cdot K,f_A(\xi))\subseteq \ker(\phi)$, and in particular, the map $\phi$ induces a surjective map on the quotient
	$$\bar{\phi}\colon \tilde{A}'=\frac{A[\xi]}{(\xi\cdot K,f_A(\xi))}\rightarrow \tilde{A}.$$ 
	Since $\tilde{A}'$ follows Construction \ref{con:hat}, Lemma \ref{lem:decomp} implies its Hilbert function is as in \eqref{eq:HhatA}:
	\begin{equation}
	\label{eq:A'}
	H(\tilde{A}')=H(\beta'(A))+H(T)[1]+\cdots+H(T)[n-1].
	\end{equation}
	where $\beta'\colon A\rightarrow \tilde{A}'$ is the natural map described after Construction \ref{con:hat}.  By condition (3.), we know that the Hilbert function of $\hat{A}$ is 
	\begin{equation}
	\label{eq:A}
	H(\tilde{A})=H(A)+H(T)[1]+\cdots+H(T)[n-1].
	\end{equation}
	Since $H(\beta(A))_i\leq H(A)_i$ for all $i$, but also $H(\tilde{A}')_i\geq H(A)_i$ for all $i$ by surjectivity of $\bar{\phi}$, we deduce that the Hilbert functions \eqref{eq:A'} and \eqref{eq:A} must be equal, and therefore that $\bar{\phi}\colon \tilde{A}'\rightarrow \tilde{A}$ must be an isomorphism.  Finally since $\tilde{A}$ is Gorenstein, it follows that $\tilde{A}'$ is Gorenstein, and hence must be a cohomological blow up, and the result follows.
\end{proof}

\begin{remark}
	\label{rem:nonsurj}
	As discussed in the Introduction, the conditions of Theorem \ref{lem:buchar} are satisfied by cohomology algebras.  More precisely if $Y\subset X$ are compact complex manifolds of dimension $k<d$, with cohomology algebras $A=H^{2\bullet}(X)$ and $T=H^{2\bullet}(Y)$ and $\pi\colon A\rightarrow T$ the induced restriction map, surjective or not, then the cohomology algebras of the blown up manifolds $\tilde{T}=H^{2\bullet}(\tilde{Y})$ and $\hat{A}=H^{2\bullet}(\tilde{X})$ with (possibly non-surjective) restriction map $\hat{\pi}\colon\hat{A}\rightarrow \tilde{T}$ satisfies the conditions (1.), (2.) and (3.) of Theorem \ref{lem:buchar}. Therefore Theorem \ref{lem:buchar} seems to offer a way to define the cohomological blow up of an oriented AG algebra $A$ along any, possibly non-surjective, map $\pi\colon A\rightarrow T$.  On the other hand, without the surjectivity assumption on $\pi$, one must sacrifice, among other things, the nice presentation given by Construction \ref{con:hat}; see Example \ref{ex:Segre1} and Remark \ref{rem:nostdgrad} in Section \ref{geomsec}.   
\end{remark}

\section{Macaulay Dual Generators}
\label{sec:MD}
Our reference for this section is \cite[Appendix A]{IK}, but see also \cite[Appendix A.2.4]{Eisenbud}.  Let $R=\F[x_1,\ldots,x_r]$ be a polynomial ring, and let $Q=\F[X_1,\ldots,X_r]$ be the dual divided power algebra.  Let $F\in Q_d$ and $G\in Q_k$ be homogeneous forms of degrees $d>k$, respectively, and suppose that $\tau\in R_{d-k}$ is a polynomial for which $G=\tau\circ F$.  Then if $A=R/\Ann(F)$ and $T=R/\Ann(G)$ are the corresponding oriented AG algebras, the identity map on $R$ induces a surjective map on the quotients 
$$\pi\colon A=\frac{R}{\Ann(F)}\rightarrow \frac{R}{\Ann(G)}=T$$
for which the Thom class is $\tau$ as shown in Lemma \ref{lem:MDThom}.

Next, we give another construction, similar to Construction \ref{con:hat}, that will lead to yet another characterization of the cohomological blow up (Theorem \ref{thm:BUMD1}).  First, we need some notation.  Throughout this section we use the notation $\overline{r}$ for the coset of $r\in R$ in the quotient algebra $T$.

\begin{definition}[$G$-dual polynomial]
Given $G$ and $T$ as above, let $\xi$ be an indeterminate, let $n$ be any positive integer and let $f_R(\xi)=\xi^n+a_1\xi^{n-1}+\cdots+a_n\in R[\xi]$ be a homogeneous monic polynomial with $a_i\in R_i$ for $1\leq i\leq n$.  Evaluation at $\xi=1$ gives a non-homogeneous element in $f_R(1)\in R$ and projection to the local ring $T$ gives a (non-homogeneous) unit $\mu_f=\overline{1}+\overline{a_1}+\cdots+\overline{a_n}\in T$.  Let $\mu_h=\overline{1}+\overline{u_1}+\cdots+\overline{u_k}\in T$ be its $T$-inverse, i.e. $\mu_f\cdot \mu_h=\overline{1}$ in $T$, with homogeneous components $\overline{u_i}\in T_i$.  A homogeneous monic polynomial $h_R(\xi)=\xi^k+u_1\xi^{k-1}+\cdots+u_k$ with homogeneous coefficients $u_i\in R_i$ that project to $\overline{u_i}\in T_i$ for every $1\leq i\leq n$ is called a \emph{$G$-dual polynomial for $f_R(\xi)$}.  
\end{definition}

Since $T$ is graded and $\mu_f\cdot\mu_h=\overline{1}$ in $T$, it follows that the polynomials $f_R(\xi)$ and its $G$-dual polynomial $h_R(\xi)$ satisfy  

$$f_R(\xi)\cdot h_R(\xi)\equiv \xi^{n+k} \mod{\Ann_R(G)\cdot R[\xi]}.$$
   
\begin{construction}
	\label{con:hatMD}
	Set $n=d-k$, let $\xi$ be an indeterminate, and let $\Xi$ be its dual divided power.  Choose homogeneous elements $a_i\in R_i$ for $1\leq i\leq n$ and define a homogeneous monic polynomial $f_R(\xi)\in R[\xi]$ by 
	\begin{equation}
	\label{eq:f1}
	f_R(\xi)=\xi^n+a_1\xi^{n-1}+\cdots+a_n.
	\end{equation}
	Let $h_R(\xi)\in R[\xi]$ be any $G$-dual polynomial of $f_R(\xi)$: 
	\begin{equation}
	\label{eq:h}
	h_R(\xi)=\xi^k+u_1\xi^{k-1}+\cdots+u_k.
	\end{equation}
	Construct the $(d-1)$-form $\tilde{G}\in Q[\Xi]$ by 
	\begin{equation}
	\label{eq:Ghat}
	\tilde{G}=h_R(\xi)\circ\left(\Xi^{d-1}\cdot G\right)=\Xi^{n-1}G+\Xi^n(u_1\circ G)+\cdots+\Xi^{d-1}(u_k\circ G),
	\end{equation}  
	and construct the oriented AG algebra
	\begin{equation}
	\label{eq:hatTMD}
	\tilde{T}_{MD}=\frac{R[\xi]}{\Ann(\tilde{G})}.
	\end{equation}
	Next fix a parameter $\lambda\in\F^\times$ and construct the $d$-form $\hat{F}\in Q[\Xi]$ 
	\begin{equation}
	\label{eq:FAhatMD}
	\hat{F}= F- \lambda\cdot \Xi\tilde{G}= F-\lambda\left(\Xi^nG+ \Xi^{n+1}(u_1\circ G)+\cdots+ \Xi^{n+k}(u_k\circ G)\right)
	\end{equation}
	and construct the oriented AG algebra 
	\begin{equation}
	\label{eq:hatAMD}
	\hat{A}_{MD}=\frac{R[\xi]}{\Ann(\hat{F})}.
	\end{equation}
\end{construction} 
In Construction \ref{con:hatMD} one can easily check that we have 

$$\Ann_{R[\xi]}(\hat{F})\cap R=\Ann_R(F), \ \ \text{and} \ \ \Ann_{R[\xi]}(\tilde{G})\cap R=\Ann_R(G)
$$
 
which implies that the inclusion map $R\hookrightarrow R[\xi]$ induces \emph{injective} maps 

$$\beta\colon A=\frac{R}{\Ann_R(F)}\rightarrow \frac{R[\xi]}{\Ann_{R[\xi]}(\hat{F})}=\hat{A}_{MD} \quad \text{ and } \quad
\beta_0\colon T=\frac{R}{\Ann_R(G)}\rightarrow \frac{R[\xi]}{\Ann_{R[\xi]}(\tilde{G})}=\tilde{T}_{MD},
$$ 
Next we observe that $\xi\circ \tilde{F}=\lambda\cdot \tilde{G}$, and hence the identity map $R[\xi]\rightarrow R[\xi]$ passes to a surjective map on the quotient algebras 

$$\hat{\pi}\colon \hat{A}_{MD}=\frac{R[\xi]}{\Ann_{R[\xi]}(\hat{F})}\rightarrow \frac{R[\xi]}{\Ann_{R[\xi]}\left(\lambda^{-1}\xi\circ\hat{F}=-\tilde{G}\right)}=\tilde{T}_{MD}
$$ 
with Thom class $\tilde{\tau}=-\lambda^{-1}\xi$ (compare with Lemma \ref{lem:Thomhat}).  Since the identity and inclusion maps on $R$ and $R[\xi]$ form a commutative square, it follows that the maps on quotients do too, resulting in a commutative diagram
\begin{equation}
\label{eq:CD}
\xymatrix{A\ar[r]^-{\beta}\ar[d]_-{\pi} & \hat{A}_{MD}\ar[d]^-{\hat{\pi}}\\
T\ar[r]_-{\beta_0} & \tilde{T}_{MD};\\}
\end{equation}
compare with condition (1.) of Theorem \ref{lem:buchar}.  The following result is related to condition (2.) of Theorem \ref{lem:buchar}, namely that $\hat{T}_{MD}$ is a free extension of $T$; compare with Lemma \ref{lem:freeExt}.  

\begin{lemma}
	\label{lem:FreeExt}
	Let $G\in Q_k$ be any homogeneous form, let $f_R(\xi)=\xi^n+a_1\xi^{n-1}+\cdots+a_n$ any monic homogeneous polynomial with coefficients $a_i\in R_i$, and let $h_R(\xi)=\xi^k+u_1\xi^{k-1}+\cdots+u_k\in R[\xi]$ be a $G$-dual of $f_R(\xi)$, as in Construction \ref{con:hatMD}.  Then we have the following equality of ideals: 
	$$\Ann\left(h_R(\xi)\circ\left(\Xi^{d-1}\cdot G\right)\right)=\Ann_R(G)\cdot R[\xi]+(f_R(\xi)).$$
	In particular, the oriented AG algebra $\tilde{T}_{MD}$ from Construction \ref{con:hatMD} Equation \eqref{eq:hatTMD} is a free extension over $T$, and in fact is equal to the algebra $\tilde{T}$ from Construction \ref{con:hat} Equation \eqref{eq:hatT}, i.e. 
	$$\tilde{T}_{MD}=\frac{R[\xi]}{\Ann(\tilde{G}=h_R(\xi)\circ \left(\Xi^{d-1}\cdot G\right))}= \frac{R[\xi]}{\Ann_R(G)\cdot R[\xi]+(f_R(\xi))}=\frac{T[\xi]}{(f_T(\xi)=\overline{f_R(\xi)})}=\tilde{T}.$$
\end{lemma}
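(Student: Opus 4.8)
The plan is to establish the ideal equality $\Ann\bigl(h_R(\xi)\circ(\Xi^{d-1}G)\bigr)=\Ann_R(G)\cdot R[\xi]+(f_R(\xi))$, from which the identifications $\tilde{T}_{MD}=T[\xi]/(f_T(\xi))=\tilde{T}$ follow by inspecting presentations. I would argue by a dimension count after showing one containment. First I would verify the easier containment ``$\supseteq$''. For any $a\in\Ann_R(G)$, since contraction by $a$ kills $G$ it kills $\Xi^{j}(u\circ G)$ for every homogeneous $u$ and every $j$ — wait, more carefully: $a\circ(\Xi^{d-1}G)$ lies in $\Xi^{d-1}\cdot(a\circ G$-type terms$)=0$ because $a$ commutes past the divided power variable $\Xi$, which $a$ does not involve; hence $a$ annihilates $\tilde G=h_R(\xi)\circ(\Xi^{d-1}G)$. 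So $\Ann_R(G)\cdot R[\xi]\subseteq \Ann(\tilde G)$. Next, $f_R(\xi)\circ\tilde G = f_R(\xi)h_R(\xi)\circ(\Xi^{d-1}G)$; by the displayed congruence just before Construction~\ref{con:hatMD}, $f_R(\xi)h_R(\xi)\equiv \xi^{n+k}=\xi^{d}\pmod{\Ann_R(G)\cdot R[\xi]}$, and $\xi^{d}\circ(\Xi^{d-1}G)=0$ for degree reasons (we are contracting a degree-$d$ operator against a form that has $\Xi$-degree only $d-1$). Combining, $f_R(\xi)\circ\tilde G\in \Ann_R(G)\cdot R[\xi]\circ(\Xi^{d-1}G)=0$, so $f_R(\xi)\in\Ann(\tilde G)$ as well. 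This gives ``$\supseteq$''.

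For the reverse containment, rather than arguing directly I would compare Hilbert functions. The quotient $R[\xi]/\bigl(\Ann_R(G)\cdot R[\xi]+(f_R(\xi))\bigr)$ equals $T[\xi]/(f_T(\xi))$ with $f_T(\xi)=\overline{f_R(\xi)}$, which is precisely the algebra $\tilde T$ of Construction~\ref{con:hat}, Equation~\eqref{eq:hatT}; by Lemma~\ref{lem:freeExt} it is a free extension of $T$ with fiber $\F[\xi]/(\xi^n)$, hence $H(\tilde T)=H(T)\cdot(1+t+\cdots+t^{n-1})$, and its socle degree is $k+(n-1)=d-1$. On the other side, $R[\xi]/\Ann(\tilde G)$ is by definition the AG algebra with Macaulay dual generator $\tilde G$, whose degree is $d-1$, so it is Gorenstein of socle degree $d-1$. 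Since we have a surjection from the first onto $R[\xi]/\Ann(\tilde G)$ (because $\Ann_R(G)\cdot R[\xi]+(f_R(\xi))\subseteq\Ann(\tilde G)$ by the first part), it suffices to show the two algebras have the same dimension, i.e. the same length. For this I would compute the Hilbert function of $R[\xi]/\Ann(\tilde G)$ directly from the shape of $\tilde G$: writing $\tilde G=\Xi^{n-1}G+\Xi^{n}(u_1\circ G)+\cdots+\Xi^{d-1}(u_k\circ G)$ and analyzing which contraction operators in $R[\xi]_i$ kill it, one sees that in each degree the space of operators \emph{not} annihilating $\tilde G$ is spanned, modulo $\Ann_R(G)$, by $\xi^j\cdot(\text{lift of a basis of }T_{i-j})$ for $0\le j\le n-1$; this matches $H(\tilde T)$ exactly. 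Then the surjection $\tilde T\twoheadrightarrow R[\xi]/\Ann(\tilde G)$ between algebras of equal finite length is an isomorphism, forcing $\Ann(\tilde G)=\Ann_R(G)\cdot R[\xi]+(f_R(\xi))$.

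An alternative, perhaps cleaner, route for the reverse containment avoids the explicit Hilbert function computation: since $\tilde T$ is Gorenstein of socle degree $d-1$ (Lemma~\ref{lem:freeExt}), and the surjection $q\colon \tilde T=R[\xi]/\bigl(\Ann_R(G)R[\xi]+(f_R(\xi))\bigr)\to R[\xi]/\Ann(\tilde G)$ is a map of graded AG algebras of the same socle degree $d-1$ (the target is AG of socle degree $d-1$ by Macaulay duality applied to $\tilde G\in Q[\Xi]_{d-1}$, provided we check $\tilde G\ne 0$, which holds since its leading term $\Xi^{n-1}G$ is nonzero), Lemma~\ref{lem:soclefits} immediately gives that $q$ is an isomorphism. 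I would need only the small additional check that $q$ is degree preserving and that the socle of $\tilde T$ is concentrated in degree $d-1$ — both already known — together with $\tilde G\neq 0$.

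The main obstacle I anticipate is the bookkeeping in verifying that $f_R(\xi)\circ\tilde G=0$ and, if one takes the Hilbert-function route, identifying the operators that kill $\tilde G$ in each degree; the divided-power formalism requires care because $\xi$-powers interact with the $\Xi$-powers via contraction while the polynomial-ring variables $x_i$ act only on the $X$-variables. Using the clean Macaulay-duality/Lemma~\ref{lem:soclefits} argument sidesteps most of this, so I would present that as the primary proof and relegate the congruence $f_R(\xi)h_R(\xi)\equiv\xi^{d}\pmod{\Ann_R(G)R[\xi]}$ — already recorded in the text — as the key computational input. The final sentence of the lemma, that $\tilde T_{MD}=\tilde T$, is then just a restatement of the presentation $R[\xi]/\bigl(\Ann_R(G)R[\xi]+(f_R(\xi))\bigr)=T[\xi]/(f_T(\xi))$.
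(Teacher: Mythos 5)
Your proposal is correct, and the ``cleaner'' route you prefer is essentially the paper's own proof: show the containment $\Ann_R(G)\cdot R[\xi]+(f_R(\xi))\subseteq\Ann(\tilde G)$ from the displayed congruence $f_R(\xi)h_R(\xi)\equiv\xi^d\ \mathrm{mod}\ \Ann_R(G)\cdot R[\xi]$, obtain the induced surjection from $\tilde T$ onto $R[\xi]/\Ann(\tilde G)$, and finish with Lemma~\ref{lem:soclefits}. The only minor difference is that you invoke the ``in particular'' clause of Lemma~\ref{lem:soclefits} (surjection of graded AG algebras of equal socle degree is an isomorphism), so you must note $\tilde G\neq 0$ has degree $d-1$, whereas the paper directly computes that the socle generator $\overline{t_{soc}\xi^{n-1}}$ has nonzero image $t_{soc}\xi^{n-1}\circ\tilde G=1$ and applies the first part of that lemma; these are the same idea. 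Your Hilbert-function backup route would also work but is unnecessary extra bookkeeping.
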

\begin{proof}
	The containment 
	$$\Ann_R(G)\cdot R[\xi]+(f_R(\xi))\subseteq \Ann_{R[\xi]}\left(\tilde{G}=h_R(\xi)\circ\left(\Xi^{d-1}\cdot G\right)\right)$$
	follows from the relation 
	$$f_R(\xi)\cdot h_R(\xi)\equiv \xi^d \ \text{mod} \ \Ann_R(G)\cdot R[\xi].$$
	Therefore the identity map $R[\xi]\rightarrow R[\xi]$ passes to a surjective map of algebras
	$$\phi\colon \tilde{T}=\frac{T[\xi]}{(\overline{f(\xi)})}=\frac{R[\xi]}{\Ann_R(G)\cdot R[\xi]+(f_R(\xi))}\rightarrow \frac{R[\xi]}{\Ann_{R[\xi]}(\tilde{G})}=\tilde{T}_{MD}.$$
	Let $t_{soc}\in R_k$ be any homogeneous polynomial which projects onto the socle generator of $T$, so that the distinguished socle generator of $\tilde{T}$ is $\overline{t_{soc}\cdot \xi^{n-1}}$.  Since 
	$$t_{soc}\cdot \xi^{n-1}\cdot h_R(\xi)\circ\left(\Xi^{d-1}\cdot G\right)=1$$
	it follows from Lemma \ref{lem:soclefits} that $\phi$ must also be injective and hence an isomorphism, and the result follows.
\end{proof}

\begin{remark}
	\label{rem:SSThom}
	In \cite{SmithStong}, the authors refer to the monic polynomial $f_R(\xi)$ as the \emph{homogenizing polynomial} for $T$ and to the $G$-dual $h_R(\xi)$ as the \emph{dual homogenizing polynomial}.  We refer also to \cite{BJMR} for more on de-homoginization . 	We also remark monic polynomial $f_R(\xi)$ and its $G$-dual polynomial $h_R(\xi)$ also satisfy
	\begin{equation}
	\label{eq:hatfhalt}
	f_R(\xi)\cdot h_R(\xi)\circ\left(\Xi^{d}\cdot G\right)=f_R(\xi)\circ\left(\Xi\cdot\hat{G}\right)=G;
	\end{equation}
	in particular, $f_R(\xi)$ is the Thom class of the projection map 
	$$\pi_B\colon B:=\frac{R[\xi]}{\Ann_{R[\xi]}\left(\Xi\cdot\tilde{G}\right)}\rightarrow \frac{R[\xi]}{\Ann_{R[\xi]}(G)}=T.$$ 
	The algebra $B$ defined above will show up again when we discuss connected sums.
\end{remark}

Lest the reader think that the AG algebra 
$$\tilde{T}=\frac{R[\xi]}{\Ann_{R[\xi]}\left(h_R(\xi)\circ\left(\Xi^{d-1}\cdot G\right)\right)}$$
is always a free extension over $T=R/\Ann_R(G)$, the following example shows otherwise.
\begin{example}
	\label{ex:NotPBI}
	Let $R=\F[x,y,z]$, $Q=\F[X,Y,Z]$, and let $G=XYZ$ with $k=\deg(G)=3$.  Suppose that we choose $n=2$, and $h_R(\xi)=\xi^3+(xy+xz+yz)\xi+(xyz)$ and define $\tilde{G}$ as in Construction \ref{con:hatMD} Equation \eqref{eq:Ghat}: 
	\begin{align*}
	\tilde{G}= & \Xi XYZ+\Xi^3(X+Y+Z)+\Xi^4 =h(\xi)\circ\left(\Xi^4G\right).
	\end{align*}
Note that there is no monic polynomial $f_R(\xi)\in \Ann_{R[\xi]}(\tilde{G})$ of degree $n=2$, and in fact we have 
$$\tilde{T}=\frac{R[\xi]}{\Ann(\tilde{G})}=\frac{\F[x,y,z,\xi]}{(x^2,y^2,z^2, \xi xyz-\xi^3(x+y+z), \xi^3(x+y+z)-\xi^4, \xi^5)}$$
which is not a free extension over $T=R/\Ann(G)$.

Note, however, that the corresponding unit $\overline{h(1)}\in T$ is $\mu_h=\overline{1}+\overline{(xy+xz+yz)}+\overline{(xyz)}$ and its $T$-inverse is $\left(\mu_h\right)^{-1}=\overline{1}-\overline{(xy+xz+yz)}-\overline{(xyz)}$, indicating that the ``correct choice'' is $f_R(\xi)=\xi^3-(xy+xz+yz)\xi-xyz$ of degree $n=3$ (not $n=2$). Indeed we see that if we define the homogeneous form of degree $5$
$$\tilde{G}'=\Xi^2XYZ+\Xi^4(X+Y+Z)+\Xi^5=h_R(\xi)\circ\left(\Xi^5G\right)$$
then 
$$\tilde{T}'=\frac{R[\xi]}{\Ann_{R[\xi]}(\tilde{G}')}=\frac{R[\xi]}{\Ann_{R}(G)\cdot R[\xi]+(f_R(\xi))}=\frac{T[\xi]}{\left(f_T(\xi)=\xi^3-\overline{(xy+xz+yz)}\xi-\overline{(xyz)}\right)}$$
which is a free extension over $T=R/\Ann(G)$ with fiber $F=\F[\xi]/(\xi^3)$.
The moral of the story here is that given a $k$-form $G\in Q_k$ and $T=R/\Ann(G)$, then in order to construct the dual generator of a free extension over $T$ by the formula $h_R(\xi)\circ\left(\Xi^{n+k-1}\cdot G\right)\in Q_{d-1}$, we can either choose the integer $n$ and the monic polynomial $f_R(\xi)$ of degree $n$ and then take $h_R(\xi)$ as its $G$-dual, or we can choose the monic polynomial $h_R(\xi)$ of degree $k$, then take its $G$-dual $f_R(\xi)$ and take $n=\deg(f_R(\xi))$, but we cannot necessarily choose the integer $n$ and the polynomial $h_R(\xi)$ simultaneously.  Of course the $\tilde{G}$ of Construction \ref{con:hatMD} Equation \eqref{eq:Ghat} follows the former procedure and, by Lemma \ref{lem:FreeExt}, is always the dual generator of a free extension of $T$.   
\end{example}

The following result gives necessary and sufficient conditions for $\hat{A}_{MD}$ to be a cohomological blow-up algebra of $A$ along $\pi\colon A\rightarrow T$ in the sense of Definition \ref{def:blowup}.  
\begin{theorem}
	\label{thm:BUMD1}
	 Let $f_R(\xi)$, $h_R(\xi)$, $\tilde{G}$, $\tilde{T}_{MD}$, $\lambda$, $\hat{F}$ and $\hat{A}_{MD}$ be as in Construction \ref{con:hatMD}.  Then the following statements are equivalent:
	\begin{enumerate}
		\item The algebra $\hat{A}_{MD}$ is isomorphic to a cohomological blow up of $A$ along $\pi$ with some parameters $(t_1,\ldots,t_{n-1},\lambda)$. 		
		\item The Hilbert function of $\hat{A}_{MD}$ satisfies 
		$$H(\hat{A}_{MD})=H(A)+ H(T)[1]+\cdots+ H(T)[n-1].$$
		
		\item There exists an element $r\in R$ for which $f_R(\xi)-r\in \Ann_{R[\xi]}(\tilde{F})$.
		
		\item The constant coefficient $r_n$ of $f_R(\xi)$ satisfies
		$$\left(r_n-r\right)\circ F=\lambda\cdot G$$
		for some $r\in\Ann_R(G)$.
	\end{enumerate}
\end{theorem}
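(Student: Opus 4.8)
The plan is to prove the four conditions equivalent by running the cycle $(1)\Rightarrow(2)\Rightarrow(3)\Rightarrow(4)\Rightarrow(1)$; it links three levels of information about $\hat A_{MD}$ — its isomorphism type, its Hilbert function, and an annihilator condition on the dual form $\hat F$ — and essentially all of the work sits in the arrows passing between these levels. The implication $(1)\Rightarrow(2)$ is immediate: a cohomological blow up of $A$ along $\pi$ is produced by Construction \ref{con:hat} with constant coefficient a nonzero multiple of the Thom class, so Corollary \ref{cor:splitseq} gives it Hilbert function $H(A)+H(T)[1]+\cdots+H(T)[n-1]$, and a graded isomorphism preserves Hilbert functions.

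For $(2)\Rightarrow(3)$ I would use the commutative square \eqref{eq:CD}: it turns $0\to A\xrightarrow{\beta}\hat A_{MD}\xrightarrow{\hat\pi'}\tilde T_{MD}/\beta_0(T)\to 0$ into a complex of graded vector spaces, where $\hat\pi'$ is $\hat\pi$ followed by the quotient map, the map $\beta$ is injective (since $\Ann_{R[\xi]}(\hat F)\cap R=\Ann_R(F)$) and $\hat\pi'$ is surjective. By Lemma \ref{lem:FreeExt} (with Lemma \ref{lem:freeExt}) the algebra $\tilde T_{MD}$ is the free extension $\tilde T$ of $T$ on $1,\xi,\dots,\xi^{n-1}$, so $\tilde T_{MD}/\beta_0(T)$ has Hilbert function $H(T)[1]+\cdots+H(T)[n-1]$; hence the Hilbert-function identity of $(2)$ is exactly the numerical condition forcing this complex to be exact (the counting argument is the one already used for Corollary \ref{cor:splitseq}). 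Granting exactness, the class of $f_R(\xi)$ in $\hat A_{MD}$ maps to zero in $\tilde T_{MD}$ — because $f_R(\xi)\in\Ann_{R[\xi]}(\tilde G)$ by Lemma \ref{lem:FreeExt} — so it lies in $\ker\hat\pi'=\beta(A)$, which says $f_R(\xi)-r\in\Ann_{R[\xi]}(\hat F)$ for some $r\in R_n$; that is $(3)$. (Alternatively, $(2)\Rightarrow(1)$ could be obtained directly from Theorem \ref{lem:buchar}, whose conditions (1) and (2) hold automatically here and whose condition (3) is equivalent to $(2)$.)

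The equivalence $(3)\Leftrightarrow(4)$ is a dictionary translation via the contraction action. Taking $r$ homogeneous of degree $n$ and expanding $(f_R(\xi)-r)\circ\hat F=0$ using $\hat F=F-\lambda\,\Xi\tilde G$: the positive powers of $\xi$ in $f_R(\xi)$ annihilate $F$ (which carries no $\Xi$), so only the constant coefficient $a_n$ acts there; $f_R(\xi)\circ(\Xi\tilde G)=G$ by \eqref{eq:hatfhalt} of Remark \ref{rem:SSThom}; and $r\in R$ commutes with multiplication by $\Xi$. The relation thus reads $(a_n-r)\circ F-\lambda G+\lambda\,\Xi(r\circ\tilde G)=0$, and separating the $\Xi$-free part from the part divisible by $\Xi$ gives simultaneously $(a_n-r)\circ F=\lambda G$ and $r\circ\tilde G=0$; the latter unwinds, using $\tilde G=h_R(\xi)\circ(\Xi^{d-1}G)$ (equivalently Lemma \ref{lem:FreeExt}), to $r\in\Ann_R(G)$, which is precisely $(4)$. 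The same computation read backwards yields $(4)\Rightarrow(3)$.

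For $(4)\Rightarrow(1)$ I would put $g_R(\xi)=f_R(\xi)-r$: it is monic homogeneous of degree $n$ and lies in $\Ann_{R[\xi]}(\hat F)$ by the backwards computation just mentioned. Its image $g_A(\xi)\in A[\xi]$ is again monic of degree $n$, with constant coefficient the class of $a_n-r$ in $A$; this class equals $\lambda\tau$, since $(a_n-r)\circ F=\lambda G=\lambda(\tau\circ F)$ by Lemma \ref{lem:MDThom}. Hence, by Definition \ref{def:blowup} and Theorem \ref{thm:hatAGor}, the algebra $A[\xi]/(\xi\cdot K,g_A(\xi))$ is the cohomological blow up of $A$ along $\pi$ with parameter $\lambda$, and in particular is AG of socle degree $d$. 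Because the quotient map $R[\xi]\to\hat A_{MD}$ factors through $A[\xi]$ (as $\Ann_R(F)\subseteq\Ann_{R[\xi]}(\hat F)$) and annihilates both $\xi\cdot K$ and $g_A(\xi)$, it induces a surjection of this cohomological blow up onto $\hat A_{MD}$, which is itself AG of socle degree $d$ (being $R[\xi]/\Ann(\hat F)$ for a nonzero form $\hat F$ of degree $d$); Lemma \ref{lem:soclefits} then upgrades the surjection to an isomorphism, proving $(1)$. I expect the main obstacle to be the bookkeeping in these last two arrows: isolating $\Xi$-degrees cleanly enough to split the single relation into the two pieces of $(4)$, and — more delicately — carrying the scalar $\lambda$ through so that $g_A(\xi)$ has constant coefficient \emph{exactly} $\lambda\tau$. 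It is this last point that makes $A[\xi]/(\xi\cdot K,g_A(\xi))$ a bona fide cohomological blow up with the parameter prescribed by Construction \ref{con:hatMD} (via Theorem \ref{thm:hatAGor}) rather than just a boundary Gorenstein algebra, and that allows Lemma \ref{lem:soclefits} to finish.
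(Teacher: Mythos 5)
Your proposal is correct and follows essentially the same cycle $(1)\Rightarrow(2)\Rightarrow(3)\Rightarrow(4)\Rightarrow(1)$ as the paper, using the same key ingredients: Corollary \ref{cor:splitseq} for $(1)\Rightarrow(2)$, the exactness argument and Lemma \ref{lem:FreeExt} for $(2)\Rightarrow(3)$, the $\Xi$-degree separation of $(f_R(\xi)-r)\circ\hat F=0$ for $(3)\Leftrightarrow(4)$, and the surjection onto $\hat A_{MD}$ finished off by Lemma \ref{lem:soclefits} for $(4)\Rightarrow(1)$. The small details you flag as potential obstacles (isolating $\Xi$-degrees, tracking $\lambda$ so the constant coefficient of $g_A(\xi)$ lands on $\lambda\tau$) are exactly the points the paper's proof handles, and in the same way.
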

\begin{proof}
	(1.) $\Rightarrow$ (2.).  Assume that $\hat{A}_{MD}$ is isomorphic to a cohomological blow up of $\pi\colon A\rightarrow T$. Then Corollary \ref{cor:splitseq} gives (2.).
	
	(2.) $\Rightarrow$ (3.).  Assume that $H(\hat{A}_{MD})=H(A)+ H(T)[1]+\cdots+ H(T)[n-1]$.  Then the sequence of maps 
	$$\xymatrix{0\ar[r] & A\ar[r]^-\beta & \hat{A}_{MD}\ar[r]^-{\hat{\pi}} & \tilde{T}/\beta_0(T)\ar[r] & 0}$$
	is exact by the argument in the proof of Corollary \ref{cor:splitseq}.  Let $f_A(\xi)$ be the equivalence class of $f_R(\xi)\in R[\xi]$ in $\hat{A}_{MD}$.  Note that by Lemma \ref{lem:FreeExt}, $f_A(\xi)$ must be in the kernel of $\hat{\pi}$, and hence by exactness, there exists $\bar{r}\in A=R/\Ann(F)$ for which $\beta(\bar{r})=f_A(\xi)$.  Then if $r\in R$ be any homogeneous lift of $\bar{r}$, we have $f_R(\xi)-r\in \Ann(\hat{F})$ which is (3.).
	
	(3.) $\Rightarrow$ (4.). 
		 Assume that there exists $r\in R$ for which $f_R(\xi)-r\in\Ann(\hat{F})$.  Then we have 
	\begin{align}
	\label{eq:fxir}
	\left(f_R(\xi)-r\right)\circ \hat{F}= & \left(f_R(\xi)-r\right)\circ \left(F-\lambda\cdot h_R(\xi)\circ\left(\Xi^{n+k}\cdot G\right)\right)\\
	\nonumber= & f_R(\xi)\circ F-r\circ F-\lambda\cdot \left(f_R(\xi)\cdot h_R(\xi)\right)\circ (\Xi^{n+k}\cdot G)+\lambda\cdot \Xi\cdot r\circ\tilde{G}\\
	\nonumber= & r_n\circ F-r\circ F-\lambda\cdot G+\lambda \cdot \Xi\cdot r\circ\tilde{G}\\
	\nonumber= & (r_n-r)\circ F-\lambda\cdot G+\lambda\cdot \Xi\cdot r\circ \tilde{G}=  0,
	\end{align}  
	which implies, by comparing $\Xi$-coefficients, that 
	$$(r_n-r)\circ F-\lambda\cdot G=0.$$
	Moreover since $f_R(\xi)-r\in\Ann(\hat{F})\subseteq \Ann(\tilde{G}=\xi\circ \hat{F})$, and $f_R(\xi)\in\Ann(\tilde{G})$, we see that $r\in\Ann(\tilde{G})$ too, which implies that $r\in \Ann_{R[\xi]}(\tilde{G})\cap R=\Ann_R(G)$, and (4.) follows. 

	(4.) $\Rightarrow$ (1.).  Assume that there exists $r\in \Ann(G)$ such that the constant coefficient of $f_R(\xi)-r\in R[\xi]$ satisfies $(r_n-r)\circ F=\lambda\cdot G$.  Then we claim that $f_R(\xi)-r\in \Ann(\hat{F})$.  Indeed as in \eqref{eq:fxir} we have 
	$$\left(f_R(\xi)-r\right)\circ \hat{F}= (r_n-r)\circ F - \lambda\cdot G-\lambda\cdot \Xi\cdot r\circ\tilde{G}
	=-\Xi\left(\lambda r\circ\tilde{G}\right)=0$$
	since $r\circ\tilde{G}=h_R(\xi)\circ\left(\Xi^d\cdot (r\circ G)\right)=0$.  
	
	Next observe that for any $r'\in\Ann_R(G)=\Ann_{R[\xi]}(\tilde{G})\cap R$ we have $\xi\cdot r'\in \Ann_{R[\xi]}(\hat{F})$.  In particular then, the kernel of the surjective map 
	$$\Phi\colon R[\xi]\rightarrow \frac{R[\xi]}{\Ann_{R[\xi]}(\hat{F})}=\hat{A}_{MD}$$
	contains $\Ann_R(G)\cdot\xi R[\xi]$, and $f_R(\xi)-r$, not to mention the ideal $\Ann_R(F)\cdot R[\xi]$.  Set $I\subset R[\xi]$ to be the sum of these components, i.e. $I=\Ann_R(F)\cdot R[\xi]+\Ann_R(G)\cdot\xi R[\xi]+(f_R(\xi)-r)$.  Then clearly we have $R[\xi]/I\cong A[\xi]/(\xi\cdot K,f_A(\xi)=\overline{f_R(\xi)-r})$ where $K=\ker(\pi)=\Ann_R(G)\cdot R[\xi]/\Ann_R(F)\cdot R[\xi]$.  Moreover if $t_i\in T_i$ is the equivalence class of $r_i\in R_i$, then since the equivalence class of $r_n-r\in R_n$ in $A$ is $\lambda\cdot \tau\in A_n$, it follows that $R[\xi]/I$ is the cohomological blow up of $A$ along $\pi$ with  parameters $(t_1,\ldots,t_{n-1},\lambda)$.  Therefore we have a surjective map of Gorenstein algebras
	$$\bar{\Phi}\colon \hat{A}=R[\xi]/I=\frac{A[\xi]}{(\xi\cdot K,f_A(\xi))}\rightarrow \hat{A}_{MD}.$$
	Since $\hat{A}$ and $\hat{A}_{MD}$ have the same socle degree, $\bar{\Phi}$ must be an isomorphism by Lemma \ref{lem:soclefits}, which is (1.).
\end{proof}

The preceding Theorem \ref{thm:BUMD1} implies that the algebras $\hat{A}$ from Construction \ref{con:hat} Equation \eqref{eq:hatA} and $\hat{A}_{MD}$ from Construction \ref{con:hatMD} Equation \eqref{eq:hatAMD} are equal precisely when they are cohomological blow ups; in this case we shall replace ``hat'' with ``tilde'' on all symbols, and write
$$\tilde{A}=\hat{A}_{MD}=\frac{R[\xi]}{\Ann\left(\tilde{F}=F-\Xi\cdot\left(\tilde{G}=h_R(\xi)\circ\left(\Xi^{d-1}\cdot G\right)\right)\right)}=\frac{A[\xi]}{\left(\xi\cdot K, \overline{f_R(\xi)}\right)}=\hat{A}.$$ 
The following gives an example in which $\hat{A}\neq \hat{A}_{MD}$.

\begin{example}
\label{ex:chris}
Let $F=X^2Y^2$ and $G=XY$ so that
$$\pi\colon A=\frac{\F[x,y]}{\Ann(X^2Y^2)}\rightarrow \frac{\F[x,y]}{\Ann(XY)}=T, \ \ \pi(x)=x, \ \pi(y)=y$$ 
is the natural projection map of corresponding AG algebras with Thom class $\tau=xy$; here $d=4$, $k=2$, and $n=2$.  Taking $f_R=\xi^2$ we see that its $G$-dual is itself, i.e. $h_R(\xi)=\xi^2$, and hence setting $\tilde{G}=h_R(\xi)\circ \left(\Xi^{d-1}G\right)=\xi^2\circ\left(\Xi^3\cdot XY\right)=\Xi XY$, we get 
$$\tilde{T}=\frac{\F[x,y,\xi]}{\Ann(\Xi XY)}=\frac{\F[x,y,\xi]}{(x^2,y^2,\xi^2)}.$$
Then taking $\lambda=1$, Construction \ref{con:hatMD} yields $\hat{F}=F-\Xi\tilde{G}=X^2Y^2-\Xi^2XY$, and hence
\[
\hat{A}_{MD}=\frac{\F[x,y,\xi]}{\Ann(X^2Y^2-\Xi^2 XY)}=
\frac{\F[x,y,\xi]}{(x^3,y^3,x^2\xi, y^2\xi,x(\xi^2+xy),y(\xi^2+xy),\xi^3)}.
\]
Note in this case, there is no $r\in\Ann(G=XY)$ for which $f_R(\xi)-r=\xi^2-r\in\Ann(\hat{F})$.  We can also compute the Hilbert function $H(\hat{A}_{MD})=(1,3, 6, 3, 1)$ whereas $H(A)+H(T)[1]=(1, 3, 5, 3, 1)$, and hence $H(\hat{A}_{MD})\neq  H(A)+H(T)[1]$, and therefore $\hat{A}_{MD}$ is not a cohomological blow up of $A$ along $\pi$, by Theorem \ref{thm:BUMD1} (or by Theorem \ref{lem:buchar}).  Moreover taking $f_A(\xi)=\overline{f_R(\xi)}=\xi^2$, Construction \ref{con:hat} yields 
$$\hat{A}=\frac{A[\xi]}{(\xi\cdot K, f_A(\xi)=\xi^2)}=\frac{\F[x,y,\xi]}{(x^3,y^3,\xi x^2,\xi y^2,\xi^2)}$$
which is not even Gorenstein; in particular, $\hat{A}\neq \hat{A}_{MD}$.

Alternatively, a ``correct choice'' is $f_R(\xi)=\xi^2-xy$ for which a $G$-dual polynomial is $h_R(\xi)=\xi^2+xy$ yielding $\tilde{G}=(\xi^2+xy)\circ\left(\Xi^3XY\right)=\Xi XY+\Xi^3$ and 
$$\tilde{T}=\frac{\F[x,y,\xi]}{\Ann(\Xi XY+\Xi^3)}=\frac{\F[x,y,\xi]}{(x^2,y^2,\xi^2-xy)}.$$
In this case, taking $\lambda=-1$ (Theorem \ref{thm:BUMD1} requires it!), Construction \ref{con:hatMD} yields $\tilde{F}=X^2Y^2+\Xi\left(\Xi XY+\Xi^3\right)$ with 
$$\tilde{A}=(\hat{A}_{MD}=)\frac{\F[x,y,\xi]}{\Ann(\Xi^2XY+\Xi^4+X^2Y^2)}=\frac{\F[x,y,\xi]}{(x^3,y^3,\xi x^2,\xi y^2,\xi^2-xy)}.$$
Here we can verify that the conditions of Theorem \ref{thm:BUMD1} are satisfied, for example the Hilbert function is 
$H(\tilde{A})=(1,3,5,3,1)=H(A)+H(T)[1]$.  Hence in this case, $\tilde{A}(=\hat{A}_{MD})$ is the cohomological blow up of $A$ along $\pi$ with parameters $(t_1,\lambda)=(0,-1)$.  In this case, taking $f_A(\xi)=\overline{f_R(\xi)}=\xi^2-xy$, then Construction \ref{con:hat} yields the same algebra: 
$$\tilde{A}=(\hat{A}=)\frac{A[\xi]}{(\xi\cdot K,f_A(\xi))}=\frac{\F[x,y,\xi]}{(x^3,y^3,\xi x^2,\xi y^2, \xi^2-xy)}.$$
\end{example}

\section{Relation With Connected Sums}
\label{sec:connsum}
\subsection{The Cohomological Blow-up Algebra as a Connected Sum}
In this section we relate cohomologiocal blow-up algebras to a different algebraic construction termed connected sum, an algebraic analogue of the better known topological construction by the same name. In the topogical construction, a connected sum is  obtained by gluing two $2d$-dimensional manifolds $M_1$ and $M_2$ along diffeomorphic tubular neighborhoods of a common $2k$-dimensional submanifold $N$. In complex geometry it is well known that the connected sum of an $n$-dimensional complex manifold $M$ with a projective space $\P^n$ is diffeomorphic to the blow up of $M$ at a point \cite[p.101]{Huybrechts}. Our contribution here is to recognize that {\em any} cohomological blow-up algebra along a surjective map is a connected sum in the algebraic sense; see Theorem \ref{thm:BUCS}.  We now recall the algebraic connected sum construction as defined in \cite{IMS}, based on the original construction defined in \cite{AAM}.

\begin{definition}[Fibered product, connected sum]
\label{def:connsum}
  Given oriented AG algebras $\left(A,\int_A\right)$, $\left(B,\int_B\right)$,  another AG algebra $\left(T,\int_T\right)$ and algebra maps $\pi_A\colon A\rightarrow T$ and $\pi_B\colon B\rightarrow T$ one forms the \emph{fibered product algebra} as the sub algebra $A\times_TB\subseteq A\times B$ of the direct product algebra given by
$$A\times_TB=\left\{(a,b)\in A\times B \ | \ \pi_A(a)=\pi_B(b)\right\}.$$

If $A, B$ have the same socle degree $d$, $\pi_A$ and $\pi_B$ have Thom classes $\tau_A$ and $\tau_B$ respectively, and the Euler classes $\pi_B(\tau_B)=\pi_A(\tau_A)$ are equal, then the \emph{total Thom class} $(\tau_A,\tau_B)$ is in the fibered product algebra $A\times_TB$ , and we define the \emph{connected sum algebra} as the quotient of the fibered product by the principal generated by the total Thom class, i.e.
$$A\#_TB=\frac{A\times_TB}{\left\langle (\tau_A,\tau_B)\right\rangle}.$$
\end{definition}

If the projection maps $\pi_A$ and $\pi_B$ are both surjective, then the connected sum algebra defined above is an AG algebra of the same socle degree $d$ as $A$ and $B$, see e.g.~\cite[Lemma 3.8]{IMS}.

In terms of Macaulay duality, the following result in \cite[Theorem 4.6]{IMS} gives a useful criterion to recognize an AG algebra as a connected sum.

\begin{proposition}
	\label{thm:CS}
	Let $R=\F[x_1,\ldots, x_r]$ and let $F,H\in R_d$ be two linearly independent homogeneous forms of degree $d$. Suppose that there exists $\sigma\in R_{d-k}$ (for some $k<d$) satisfying
	\begin{enumerate}
		\item $\sigma\circ F=\sigma\circ H\neq 0$, and 
		\item $\Ann(\sigma\circ F=\sigma\circ H)=\Ann(F)+\Ann(H)$.
	\end{enumerate} 
In this case, set
$$A=\frac{Q}{\Ann{F}}, \ B=\frac{Q}{\Ann(H)}, \ T=\frac{Q}{\Ann(\sigma\circ F=\sigma\circ H)},$$
and let $\pi_A\colon A\rightarrow T$ and $\pi_B\colon B\rightarrow T$ be the natural projection maps.  Then the Thom classes of $\pi_A$ and $\pi_B$ are given by $\tau_A=\sigma+\Ann(F)$ and $\tau_B=\sigma+\Ann(H)$, and we have algebra isomorphisms
$$A\times_TB\cong \frac{Q}{\Ann(F)\cap\Ann(H)}, \ \ A\#_TB\cong \frac{Q}{\Ann(F-H)}.$$
And, conversely, every connected sum $A\#_T B$ of graded AG algebras of the same socle degree over graded AG algebra $T$ arises in this way.
\end{proposition}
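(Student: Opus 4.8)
The plan is to translate everything into Macaulay dual data in $R$ (which acts on $Q$ by contraction), using the orientations induced on $A=R/\Ann(F)$, $B=R/\Ann(H)$ and $T=R/\Ann(\sigma\circ F)$ by the dual generators, and then to invoke Macaulay duality together with Lemmas~\ref{lem:MDThom} and~\ref{lem:soclefits}. First, since $\sigma\circ F=\sigma\circ H$, both $\Ann(F)$ and $\Ann(H)$ lie in $\Ann(\sigma\circ F)$, so $\pi_A$ and $\pi_B$ are the natural surjections onto $T$; applying Lemma~\ref{lem:MDThom} to $\pi_A$ with dual generators $F,\sigma\circ F$ (resp.\ to $\pi_B$ with $H,\sigma\circ H$) shows that $\sigma$ is a homogeneous lift of the Thom class, i.e.\ $\tau_A=\sigma+\Ann(F)$ and $\tau_B=\sigma+\Ann(H)$, both of degree $d-k$, with common Euler class $\sigma+\Ann(\sigma\circ F)$, so the connected sum is defined. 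For the fibered product, the diagonal algebra map $\delta\colon R\to A\times B$, $r\mapsto(r+\Ann(F),r+\Ann(H))$, lands in $A\times_TB$ and has kernel $\Ann(F)\cap\Ann(H)$; it is surjective onto $A\times_TB$ because, given $(a+\Ann(F),b+\Ann(H))$ with $a-b\in\Ann(\sigma\circ F)$, condition~(2) lets us write $a-b=u+v$ with $u\in\Ann(F)$ and $v\in\Ann(H)$, and then $r:=a-u=b+v$ maps to the given pair. Hence $A\times_TB\cong R/(\Ann(F)\cap\Ann(H))$ as algebras.

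Under this isomorphism the total Thom class $(\tau_A,\tau_B)=\delta(\sigma)$ corresponds to $\sigma+(\Ann(F)\cap\Ann(H))$, so
\[
A\#_TB\;\cong\;\frac{R}{\sigma R+\Ann(F)\cap\Ann(H)}.
\]
By condition~(1) we have $\sigma\circ(F-H)=0$, and $\Ann(F)\cap\Ann(H)\subseteq\Ann(F-H)$ is immediate, so the right-hand side surjects onto $R/\Ann(F-H)$. Since $F-H\neq0$ (the forms $F,H$ being linearly independent), $R/\Ann(F-H)$ is a graded AG algebra of socle degree $d$; and $A\#_TB$ is a graded AG algebra of the same socle degree $d$ by the fact recorded just before the statement (as $\pi_A,\pi_B$ are surjective). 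A surjection of graded AG algebras of equal socle degree is an isomorphism by Lemma~\ref{lem:soclefits}, so $A\#_TB\cong R/\Ann(F-H)$, and the formula for $A\times_TB$ is the one above.

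For the converse, let $A,B$ be oriented graded AG algebras of socle degree $d$, let $T$ be a graded AG algebra, and let $\pi_A\colon A\to T$, $\pi_B\colon B\to T$ be surjective degree-preserving maps whose Thom classes have equal Euler classes (so that $A\#_TB$ is defined and AG). Fix a polynomial presentation $R_T\twoheadrightarrow T$; lift the images of the variables of $R_T$ to $A$ and adjoin new variables $\underline x$ mapping onto algebra generators of $\ker\pi_A$, obtaining $R_A=R_T[\underline x]\twoheadrightarrow A$ whose composite with $\pi_A$ is $R_A\to R_A/(\underline x)=R_T\to T$; do the same for $B$ with variables $\underline y$. Then $R:=R_T[\underline x,\underline y]$ surjects compatibly onto $A$, $B$ and $T$ with kernels $I_A,I_B\subseteq J$, and since $\underline y\subseteq I_A$, $\underline x\subseteq I_B$, and the image of $I_B$ in $A$ equals $\ker\pi_A$, we get $R/(I_A+I_B)\cong T$ and therefore $I_A+I_B=J$. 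Choosing dual generators $F,H,G$ (scaled to realize the given orientations), Macaulay duality gives $I_A=\Ann(F)$, $I_B=\Ann(H)$, $J=\Ann(G)$, hence $\Ann(F)+\Ann(H)=\Ann(G)$. Let $\sigma_A,\sigma_B\in R_{d-k}$ be homogeneous lifts of $\tau_A,\tau_B$; by Lemma~\ref{lem:MDThom}, $\sigma_A\circ F=G=\sigma_B\circ H$, and equality of the Euler classes means exactly $\sigma_A-\sigma_B\in J=\Ann(F)+\Ann(H)$. Writing $\sigma_A-\sigma_B=p+q$ with $p\in\Ann(F)$, $q\in\Ann(H)$, and setting $\sigma:=\sigma_A-p=\sigma_B+q$, we obtain $\sigma\circ F=G=\sigma\circ H$; thus $F,H,\sigma$ satisfy conditions~(1) and~(2), and by the first part the given connected sum is realized as $R/\Ann(F-H)$ in the stated way.

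The technical heart is the converse: assembling a single polynomial ring carrying compatible presentations of $A$, $B$ and $T$, and — once $F$, $H$, $G$ are fixed — replacing the two a priori distinct Thom-class lifts $\sigma_A,\sigma_B$ by a single $\sigma$ with $\sigma\circ F=\sigma\circ H$. That last adjustment is precisely where the hypothesis that the Euler classes of $\pi_A$ and $\pi_B$ coincide is used, combined with the identity $\Ann(F)+\Ann(H)=\Ann(G)$ coming from $I_A+I_B=J$. The forward direction is comparatively routine once Macaulay duality and Lemmas~\ref{lem:MDThom} and~\ref{lem:soclefits} are in hand.
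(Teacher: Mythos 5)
The paper does not prove this proposition itself; it quotes it as \cite[Theorem 4.6]{IMS} and uses it as a black box, so there is no internal proof to compare your argument to. Your proof is correct and takes the natural Macaulay-duality route: realize $A\times_TB$ as $R/(\Ann(F)\cap\Ann(H))$ via the diagonal $\delta(r)=(r+\Ann F,\,r+\Ann H)$ (with hypothesis~(2) supplying surjectivity), transport the total Thom class to $\sigma$, obtain the surjection $R/(\sigma R+\Ann F\cap\Ann H)\twoheadrightarrow R/\Ann(F-H)$ from $\sigma\circ(F-H)=0$, and then upgrade it to an isomorphism by Lemma~\ref{lem:soclefits}, since both are graded AG of socle degree $d$. (Incidentally, the statement's ``$Q/\Ann$'' is a slip for ``$R/\Ann$''; you have used $R$ correctly.) The converse is also sound: assembling a common presentation $R=R_T[\underline x,\underline y]$ with $I_A+I_B=J$ and then replacing the two a priori distinct lifts $\sigma_A,\sigma_B$ by a single $\sigma=\sigma_A-p=\sigma_B+q$, using $\sigma_A-\sigma_B\in J=\Ann(F)+\Ann(H)$, is exactly where the equality of Euler classes is used. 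Two small points you leave tacit and should make explicit: $p,q$ can be chosen homogeneous of degree $d-k$ because $J$ is a homogeneous ideal, so that the adjusted $\sigma$ is still homogeneous of the right degree; and the hypothesis that $F,H$ be \emph{linearly independent} must be checked in the converse. The latter does follow -- if $F$ were a scalar multiple of $H$ then $\Ann F=\Ann H$, i.e.\ $I_A=I_B$, and $I_A+I_B=J$ would force $I_A=J$ and $A\cong T$, which cannot happen once the Thom classes have positive degree (as they must for $A\#_TB$ to be nonzero) -- but the observation deserves a sentence.
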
   

We utilize this result to realize a cohomological blow-up algebra defined as in Construction \ref{con:hatMD} as a connected sum.

\begin{example}
	\label{ex:blowup1}
	Let $F=X^2Y^2$ and $G=Y$ and $\pi$ the projection between their corresponding AG algebras
	$$\pi\colon A=\frac{\F[x,y]}{\Ann(X^2Y^2)}\rightarrow \frac{\F[x,y]}{\Ann(Y)}=T,$$
	with Thom class $\tau=x^2y$.  Here $H(A)=(1,2,3,2,1)$, $ H(T)=(1,1)$, $d=4$, $k=1$, and $n=3$.  Take $h_R(\xi)=\xi+y$ with corresponding unit $\mu_h=1+y\in T$, and inverse $\left(\mu_h\right)^{-1}=1-y$, so that $f_R(\xi)=\xi^3-\xi^2y$.  Then 
	$$\tilde{G}=\left(\xi+y\right)\circ\left(\Xi^3Y\right)=\Xi^2 Y+\Xi^3.$$  
	Choosing $\lambda=-2$, we have 
	$$\tilde{F}=X^2Y^2-2\Xi\left(\Xi^2 Y+\Xi^3\right)=X^2Y^2-2\Xi^3Y-2\Xi^4.$$
	Setting $r=-2x^2y\in\Ann_R(G)$, we have $\hat{f}_R(\xi)=f_R(\xi)-r=\xi^3-\xi^2y+2x^2y\in\Ann_{R[\xi]}(\tilde{F})$, and hence 
	$$\tilde{A}=\frac{\F[x,y,\xi]}{\Ann(\tilde{F})}=\frac{\F[x,y,\xi]}{(x^3,y^3,\xi x, \xi y^2, \xi^3-\xi^2y+2x^2y)}$$
	is according to Theorem \ref{thm:BUMD1} the cohomological blow up of $A$ along $\pi$ with parameters $(\overline{-y}, 0, ,0, 2)$, with Hilbert function $H(\tilde{A})=H(A)+H(T)[1]+H(T)[2]=(1,3,5,3,1)$.
	
	Setting $H=2\Xi^3Y+2\Xi^4$, we have $\tilde{F}=F-H$.  Moreover the element $\tau=\hat{f}_R(\xi)$ satisfies $\tau\circ H=\tau\circ F=Y$ and $\Ann_{R[\xi]}(Y)=\Ann_{R[\xi]}(F)+\Ann_{R[\xi]}(H)$.  Hence it follows from Proposition \ref{thm:CS} that we can also realize $\tilde{A}$ as a connected sum of $A=R[\xi]/\Ann(F)$ and $B=R[\xi]/\Ann(H)$ over $T=R/\Ann(Y)$, i.e.	$\tilde{A}\cong A\#_TB.$
\end{example}
	
In the following we show that Example \ref{ex:blowup1} is an instance of a general phenomenon: any cohomological blow-up algebra is a connected sum.  For the remainder of this section we work under the following set up.  Fix a $d$-form $F\in Q_d$ and a $k$-form $G\in Q_k$ with $d>k$, and let $A=R/\Ann(F)$ and $T=R/\Ann(G)$ be the associated oriented AG algebras.  Assume there exists homogeneous polynomial $\tau\in R_{d-k}$ for which $\tau\circ F=G$, so that the natural projection map $\pi_A\colon A\rightarrow T$ has Thom class $\tau_A=\overline{\tau}$.  Let $n=d-k$, let $\xi$ be an indeterminate, $\Xi$ a divided power variable dual to $\xi$, and fix a monic homogeneous polynomial $f_R(\xi)=\xi^n+r_1\xi^{n-1}+\cdots+r_n\in R[\xi]$ with coefficients $r_i\in R_i$ such that $r_n=\lambda\cdot\tau$ for some nonzero constant $\lambda\in\F^\times$.  Let $h_R(\xi)=\xi^k+u_1\xi^{k-1}+\cdots+u_k\in R[\xi]$ be a $G$-dual polynomial of $f_R(\xi)$. Define the $(d-1)$-form $\tilde{G}$ and its associated oriented AG algebra $\tilde{T}$ by
\[ \tilde{G}=h_R(\xi)\circ\left(\Xi^{d-1}\cdot G\right), \qquad \tilde{T}=\frac{R[\xi]}{\Ann_{R[\xi]}(\tilde{G})}.\]
Define the $d$-form $\tilde{F}$ and its associated oriented AG algebra $\tilde{A}$ 
 \[ \tilde{F}=F-\lambda\cdot \Xi\cdot \tilde{G}, \qquad \tilde{A}=\hat{A}_{MD}=\frac{R[\xi]}{\Ann_{R[\xi]}(\tilde{F})}\]
 as in construction \ref{con:hatMD}; note that Theorem \ref{thm:BUMD1} guarantees that $\hat{A}_{MD}=\tilde{A}$ here.
Define the new $d$-form $H$ and its associated AG algebra  $B$
$$ H=\lambda\cdot \Xi\cdot \tilde{G}\in Q[\Xi]_d,  \qquad B=\frac{R[\xi]}{\Ann_{R[\xi]}\left(H\right)}$$
and recall that 
$$H=\lambda\cdot \Xi\cdot \tilde{G}=\lambda\cdot h_R(\xi)\circ\left(\Xi^d\cdot G\right).$$
It follows that $h_R(\xi)$ is also a $G$-dual polynomial for $\xi\cdot f_R(\xi)$ of degree $n+1$, and hence by Lemma \ref{lem:FreeExt} we have 
$$B=\frac{R[\xi]}{\Ann_{R[\xi]}\left(H\right)}=\frac{R[\xi]}{\Ann_{R[\xi]}(\Ann_R(G)\cdot R[\xi]+(\xi\cdot f_R(\xi))}=\frac{T[\xi]}{\left(\overline{\xi\cdot f_R(\xi)}\right)}.$$
Thus the distinguished socle generator of $B$ as $b_{soc}=\lambda^{-1}\cdot \xi^n\cdot t_{soc}$.  The evaluation $\xi=0$ passes to a map on quotients $\pi_B\colon B\rightarrow T$, and from the identity \eqref{eq:hatfhalt} it follows that its Thom class is 
$$\tau_B=\lambda^{-1}\cdot f_R(\xi).$$
Note that the Euler class of $\pi_B$ is $\pi_B(\lambda^{-1}\cdot f_R(\xi))=\tau$ which is equal to the Euler class of $\pi_A$, and hence it makes sense to form the connected sum $A\#_TB$.

\begin{theorem}
	\label{thm:BUCS}
	The connected sum of $A$ and $B$ over $T$ is equal to the cohomological blow up of $A$ along $\pi$ with parameters $(\overline{a_1},\ldots,\overline{a_{n-1}},\lambda)$, i.e.
	$$\tilde{A}=A\#_TB.$$ 
\end{theorem}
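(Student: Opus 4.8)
The plan is to recognize $\tilde A=\hat A_{MD}=R[\xi]/\Ann_{R[\xi]}(\tilde F)$ as a connected sum through the Macaulay-dual criterion Proposition \ref{thm:CS}, applied with polynomial ring $R[\xi]$ acting by contraction on $Q[\Xi]$, to the two forms $F\in Q[\Xi]_d$ and $H=\lambda\cdot\Xi\cdot\tilde G\in Q[\Xi]_d$, whose difference is exactly $\tilde F=F-H$. These two forms are linearly independent: $F$ involves no $\Xi$, whereas every term of $H$ is divisible by $\Xi^{n}$ with $n\ge 1$. The connecting operator will be $\sigma=\lambda^{-1}f_R(\xi)\in R[\xi]_n$, i.e.\ (a scalar multiple of) the common Thom-class lift.

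First I would verify condition (1) of Proposition \ref{thm:CS}, that $\sigma\circ F=\sigma\circ H\ne 0$. Since $F$ involves no $\Xi$, every positive $\xi$-power term of $f_R(\xi)$ kills $F$, so $\sigma\circ F=\lambda^{-1}r_n\circ F=\lambda^{-1}\lambda\,(\tau\circ F)=G$. For $H$, I would use the rewriting $\Xi\cdot\tilde G=h_R(\xi)\circ(\Xi^{d}\cdot G)$ together with the congruence $f_R(\xi)h_R(\xi)\equiv\xi^{d}\pmod{\Ann_R(G)\cdot R[\xi]}$ (equivalently Equation \eqref{eq:hatfhalt}) to get
\[
\sigma\circ H=f_R(\xi)\circ(\Xi\tilde G)=\bigl(f_R(\xi)h_R(\xi)\bigr)\circ(\Xi^{d}G)=\xi^{d}\circ(\Xi^{d}G)=G,
\]
so $\sigma\circ F=\sigma\circ H=G\ne 0$.

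The core is condition (2), $\Ann_{R[\xi]}(G)=\Ann_{R[\xi]}(F)+\Ann_{R[\xi]}(H)$, which I would check directly from presentations of the relevant annihilators rather than by a colength count. Because $F$ and $G$ involve no $\Xi$, one has $\Ann_{R[\xi]}(F)=\Ann_R(F)\cdot R[\xi]+(\xi)$ and $\Ann_{R[\xi]}(G)=\Ann_R(G)\cdot R[\xi]+(\xi)$. For $H$, the identification $B=R[\xi]/\Ann_{R[\xi]}(H)\cong T[\xi]/(\overline{\xi\,f_R(\xi)})$ from Lemma \ref{lem:FreeExt} (recorded in the set-up) gives $\Ann_{R[\xi]}(H)=\Ann_R(G)\cdot R[\xi]+(\xi\,f_R(\xi))$. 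Adding these, and using $\Ann_R(F)\subseteq\Ann_R(G)$ (valid since $\tau\circ F=G$) and $(\xi f_R(\xi))\subseteq(\xi)$, we obtain $\Ann_{R[\xi]}(F)+\Ann_{R[\xi]}(H)=\Ann_R(G)\cdot R[\xi]+(\xi)=\Ann_{R[\xi]}(G)$, which is condition (2).

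With (1) and (2) established, Proposition \ref{thm:CS} gives that the natural surjections $R[\xi]/\Ann(F)\to R[\xi]/\Ann(G)$ and $R[\xi]/\Ann(H)\to R[\xi]/\Ann(G)$ have Thom classes $\sigma+\Ann(F)$ and $\sigma+\Ann(H)$, and that their connected sum over $R[\xi]/\Ann(G)$ equals $R[\xi]/\Ann(F-H)=R[\xi]/\Ann(\tilde F)=\tilde A$. To finish, I would match this with $A\#_T B$: the maps $R[\xi]/\Ann(F)\to A$ (killing $\xi$), $R[\xi]/\Ann(G)\to T$, and the equality $R[\xi]/\Ann(H)=B$ are isomorphisms intertwining the projection maps, under which the above Thom classes become $\bar\tau=\tau_A$ in $A$ and $\lambda^{-1}f_R(\xi)=\tau_B$ in $B$ --- exactly those of the set-up, with coinciding Euler class $\bar\tau\in T$ --- so that $A\#_TB$ is defined and is carried isomorphically onto $\tilde A$; and $\tilde A=\hat A_{MD}$ was already identified in Theorem \ref{thm:BUMD1} (condition (4) with $r=0$) as the cohomological blow up of $A$ along $\pi$ with the stated parameters. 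The only places I expect to need care are the computation $\sigma\circ H=G$ (getting the dual-polynomial congruence and the rewriting of $\Xi\tilde G$ exactly right) and the bookkeeping that the abstract isomorphisms $R[\xi]/\Ann(\cdot)\cong A,\,T$ are compatible with all structure maps and Thom classes, so that the two connected sums literally agree; the annihilator identity for condition (2), though it is the substance of the theorem, is short once the presentation $B\cong T[\xi]/(\overline{\xi f_R(\xi)})$ is in hand.
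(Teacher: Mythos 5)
Your proof is correct and takes essentially the same route as the paper: both recognize $\tilde A = R[\xi]/\Ann(\tilde F)$ via Theorem \ref{thm:BUMD1}, apply Proposition \ref{thm:CS} with $\sigma$ a scalar multiple of $f_R(\xi)$ (you use $\lambda^{-1}f_R(\xi)$, the paper uses $f_R(\xi)$, which only rescales the common form $\sigma\circ F=\sigma\circ H$), and verify condition (2) from the same three annihilator presentations $\Ann_{R[\xi]}(F)=\Ann_R(F)R[\xi]+(\xi)$, $\Ann_{R[\xi]}(G)=\Ann_R(G)R[\xi]+(\xi)$, and $\Ann_{R[\xi]}(H)=\Ann_R(G)R[\xi]+(\xi f_R(\xi))$ from Lemma \ref{lem:FreeExt}. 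Your verification of $\sigma\circ H$ via the rewriting $\Xi\tilde G=h_R(\xi)\circ(\Xi^d G)$ and the congruence $f_R(\xi)h_R(\xi)\equiv\xi^d\pmod{\Ann_R(G)R[\xi]}$ is a bit more explicit than the paper's, but it is the same computation.
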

\begin{proof}
	With our setup above, Theorem \ref{thm:BUMD1} implies that $\tilde{A}$ is the cohomological blow up of $A$ along $\pi$ with parameters $(\overline{a_1},\ldots,\overline{a_{n-1}},\lambda)$, i.e.
	$$\hat{A}_{MD}=\tilde{A}=\frac{R[\xi]}{\Ann_{R[\xi]}\left(\tilde{F}=F- H\right)}.$$
	Setting $\sigma=f_R(\xi)$  in Proposition \ref{thm:CS} it then suffice to check that conditions (1.) and (2.) hold.  Condition (1.) holds since we have 
	$$f_R(\xi)\circ F=(\xi^{n}+r_1\xi^{n-1}+\cdots+\lambda\cdot\tau)\circ F=\lambda\cdot G=f_R(\xi)\circ \left(H\right).$$
	It remains to see that 
	\begin{equation}
	\label{eq:remains}
	\Ann_{R[\xi]}(G)=\Ann_{R[\xi]}(H)+\Ann_{R[\xi]}(F).
	\end{equation}
	By Lemma \ref{lem:FreeExt} we have 
	$$\Ann_{R[\xi]}(H)=\Ann_R(G)\cdot R[\xi]+(\xi\cdot f_R(\xi)).$$
	Also, since $F$ is independent of $\xi$ we have 
	$$\Ann_{R[\xi]}(F)=\Ann_R(F)\cdot R[\xi]+(\xi).$$ 
	Since $\Ann_R(F)\subset\Ann_R(G)$ and 
	$$\Ann_{R[\xi]}(G)=\Ann_R(G)\cdot R[\xi]+(\xi),$$ 
	\eqref{eq:remains} follows, and the desired conclusion follows by Proposition \ref{thm:CS}.	
\end{proof}

\begin{remark}
	\label{rem:stdg}
	In general, fibered products and connected sums of standard graded AG algebras need not be standard graded, even in the simplest cases. Examples illustrating this appear for the fibered product in \cite[Example 4.5]{IMS}, and for the connected sum in \cite[Proposition 5.22]{IMS}. Theorem~\ref{thm:BUCS} distinguishes cohomological blow-up algebras of surjective maps as a class of connected sums which preserves the standard grading.  
However there are examples from geometry where $A$ and $T$ are standard graded, but the restriction map $\pi\colon A\rightarrow T$ is not surjective, and the cohomological blow up has a non-standard grading; see Remark \ref{rem:nostdgrad}. 
\end{remark}

\subsection{The Blow Down as a Connected Sum}
Continuing with our set up above, let $\tilde{\pi}_A\colon\tilde{A}\rightarrow \tilde{T}$ be the projection map with Thom class $\tilde{\tau}_A=-\lambda^{-1}\xi$. Set $H=\lambda\cdot \Xi\cdot \tilde{G}$ and consider the surjective map of AG algebras 
$$\tilde{\pi}_B: \tilde B: =\frac{R[\xi]}{\Ann(-H=-\lambda\cdot \Xi\cdot \tilde{G})} \to \frac{R[\xi]}{\Ann(\tilde{G})}= \tilde{T}.$$
The algebra $\tilde{B}$ is the AG algebra $B$ from the previous subsection, but with orientation reversed, i.e. $\tilde{b}_{soc}=-b_{soc}$.  Thus the Thom class of the map $\tilde{\pi}_B$ is $\tilde{\tau}_B=-\lambda^{-1}\xi$ which is equal to the Thom class of $\tilde{\pi}_A$. 
Then it makes sense to form the connected sum $\tilde{A}\#_{\tilde{T}}\tilde{B}$.

\begin{theorem}
	\label{thm:bdcs}
	The connected sum of $\tilde{A}$ and $\tilde{B}$ over $\tilde{T}$ is equal to $A$, i.e.
	$$A=\tilde{A}\#_{\tilde{T}}\tilde{B}.$$
\end{theorem}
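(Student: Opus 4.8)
The plan is to recognize $A$ as the connected sum $\tilde{A}\#_{\tilde{T}}\tilde{B}$ by applying the Macaulay-dual criterion of Proposition \ref{thm:CS} inside the polynomial ring $R[\xi]$, to the two degree-$d$ forms $\tilde{F}=F-H$ (the dual generator of $\tilde{A}$) and $-H$ (the dual generator of $\tilde{B}$). These are linearly independent, since $F$ involves no $\Xi$ while $H=\lambda\cdot\Xi\cdot\tilde{G}\neq 0$ does. The separating operator will be $\sigma=\xi\in R[\xi]_1$; this is a unit multiple of the common Thom class $\tilde{\tau}_A=\tilde{\tau}_B=-\lambda^{-1}\xi$, and since rescaling $\sigma$ by a unit alters neither hypothesis of Proposition \ref{thm:CS} nor the principal ideal $\langle(\tilde{\tau}_A,\tilde{\tau}_B)\rangle$ cutting out the connected sum, the connected sum produced by Proposition \ref{thm:CS} for $\sigma=\xi$ coincides with $\tilde{A}\#_{\tilde{T}}\tilde{B}$ (note also that the algebra $T$ of Proposition \ref{thm:CS} is here $R[\xi]/\Ann_{R[\xi]}(\xi\circ\tilde{F})=R[\xi]/\Ann_{R[\xi]}(\tilde{G})=\tilde{T}$, and the projection maps agree with $\tilde{\pi}_A,\tilde{\pi}_B$). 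Granting the two hypotheses, Proposition \ref{thm:CS} gives $\tilde{A}\#_{\tilde{T}}\tilde{B}\cong R[\xi]/\Ann_{R[\xi]}\big(\tilde{F}-(-H)\big)=R[\xi]/\Ann_{R[\xi]}(F)$, and since $F$ does not involve $\xi$ we have $\Ann_{R[\xi]}(F)=\Ann_R(F)\cdot R[\xi]+(\xi)$, whence this quotient is $R/\Ann_R(F)=A$, which is the claim.

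So I would verify the two hypotheses of Proposition \ref{thm:CS} for $\sigma=\xi$. For condition (1), I need $\xi\circ\tilde{F}=\xi\circ(-H)\neq 0$. Since $F$ has no $\Xi$, $\xi\circ F=0$, and the computation following Construction \ref{con:hatMD} shows $\xi\circ(\Xi\cdot\tilde{G})=\tilde{G}$, so $\xi\circ H=\lambda\cdot\tilde{G}$; therefore $\xi\circ\tilde{F}=\xi\circ F-\xi\circ H=-\lambda\cdot\tilde{G}=\xi\circ(-H)$, and this is nonzero because $\lambda\neq 0$ and $\tilde{G}\neq 0$.

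For condition (2), I need $\Ann_{R[\xi]}(\tilde{G})=\Ann_{R[\xi]}(\tilde{F})+\Ann_{R[\xi]}(H)$ (using $\Ann_{R[\xi]}(-\lambda\tilde{G})=\Ann_{R[\xi]}(\tilde{G})$ and $\Ann_{R[\xi]}(-H)=\Ann_{R[\xi]}(H)$). The inclusion $\supseteq$ is immediate: $\tilde{G}$ is a unit multiple of both $\xi\circ\tilde{F}$ and $\xi\circ H$, so — contraction operators commuting — any element killing $\tilde{F}$ or $H$ also kills $\tilde{G}$. For $\subseteq$ I use the free-extension descriptions from Lemma \ref{lem:FreeExt}: $\Ann_{R[\xi]}(\tilde{G})=\Ann_R(G)\cdot R[\xi]+(f_R(\xi))$, and, applying the same lemma to the $G$-dual $h_R(\xi)$ of $\xi\cdot f_R(\xi)$ exactly as in the set-up preceding Theorem \ref{thm:BUCS}, $\Ann_{R[\xi]}(H)=\Ann_R(G)\cdot R[\xi]+(\xi\cdot f_R(\xi))$. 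Hence $\Ann_R(G)\cdot R[\xi]\subseteq\Ann_{R[\xi]}(H)$, and it remains to see $f_R(\xi)\in\Ann_{R[\xi]}(\tilde{F})$. This is a direct check: $f_R(\xi)\circ\tilde{F}=f_R(\xi)\circ F-\lambda\cdot f_R(\xi)\circ(\Xi\cdot\tilde{G})$; the first term is $r_n\circ F=\lambda\cdot\tau\circ F=\lambda\cdot G$, because $\xi^j\circ F=0$ for $j\geq 1$ and $r_n=\lambda\cdot\tau$; the second term is $\lambda\cdot G$ by the de-homogenization identity $f_R(\xi)\circ(\Xi\cdot\tilde{G})=\big(f_R(\xi)h_R(\xi)\big)\circ(\Xi^d\cdot G)=G$ of Remark \ref{rem:SSThom}. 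Thus $f_R(\xi)\circ\tilde{F}=0$, which completes $\subseteq$, hence condition (2), hence the proof.

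The main obstacle is condition (2), and within it the inclusion $\subseteq$: this is the only place where genuine structure enters, via the Macaulay-dual presentations of $\Ann_{R[\xi]}(\tilde{G})$ and $\Ann_{R[\xi]}(H)$ furnished by the free-extension Lemma \ref{lem:FreeExt} together with the relation $f_R(\xi)h_R(\xi)\equiv\xi^d$. It is worth noting the formal parallel with Theorem \ref{thm:BUCS}: blowing up exhibits $\tilde{A}$ as $A\#_TB$ using the separating operator $\sigma=f_R(\xi)$ of degree $n=d-k$ (collapsing the ``$F$-direction''), while blowing down exhibits $A$ as $\tilde{A}\#_{\tilde{T}}\tilde{B}$ using $\sigma=\xi$ of degree $1$ (collapsing the ``exceptional direction''), and in each case the difference of dual generators returns exactly the expected form.
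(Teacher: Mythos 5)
Your proof is correct and follows essentially the same route as the paper's: both set $\sigma=\xi$ in Proposition \ref{thm:CS}, check condition (1) identically, and establish condition (2) via the free-extension descriptions of $\Ann_{R[\xi]}(\tilde G)$ and $\Ann_{R[\xi]}(H)$ from Lemma \ref{lem:FreeExt}. Your handling of condition (2) differs only cosmetically --- you verify both inclusions and obtain $f_R(\xi)\in\Ann_{R[\xi]}(\tilde F)$ by a direct Macaulay-duality calculation, whereas the paper cites the full ideal presentation of $\Ann_{R[\xi]}(\tilde F)$ supplied by Theorem \ref{thm:BUMD1} and then sums the two presentations --- but the underlying computation is the same.
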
  
\begin{proof}
	Write 
	$$A=\frac{R[\xi]}{\Ann_{R[\xi]}\left(F=\tilde{F}-(-H)\right)}=\frac{R[\xi]}{\Ann_{R}(F)\cdot R[\xi]+(\xi)}.$$
	Setting $\sigma=\xi$ in Proposition \ref{thm:CS}, we will show that conditions (1.) and (2.) hold.  Condition (1.) holds because  
	$$\xi\circ \tilde{F}=-\lambda\cdot \tilde{G}=\xi\circ (- H).$$  
	For Condition (2.) note that it follows from Theorem \ref{thm:BUMD1} and Construction \ref{con:hat} that
	$$\tilde{A}\cong \frac{A[\xi]}{\left(\xi\cdot K,\overline{f_R(\xi)}\right)}=\frac{R[\xi]}{\Ann_R(F)\cdot R[\xi]+\xi\cdot \Ann_R(G)\cdot R[\xi]+(f_R(\xi))}$$
	and hence that  
	$$\Ann_{R[\xi]}(\tilde{F})=\Ann_R(F)\cdot R[\xi]+\xi\cdot \Ann_R(G)\cdot R[\xi]+(f_R(\xi)).$$ 
	Also from Lemma \ref{lem:FreeExt} we have
	$$\Ann_{R[\xi]}(H)=\Ann_R(G)\cdot R[\xi]+(\xi\cdot f_R(\xi)),$$ 
	and since $\Ann_R(F)\subseteq\Ann_R(G=\tau\circ F)$, their sum satisfies
	$$\Ann_{R[\xi]}(\tilde{F})+\Ann_{R[\xi]}(H)=\Ann_R(G)\cdot R[\xi]+(f_R(\xi))=\Ann_{R[\xi]}(\tilde{G})$$
	which is condition (2).  Hence the result follows from Proposition \ref{thm:CS}.
\end{proof}

One interesting consequence of Theorem \ref{thm:bdcs} is that every AG algebra has a nontrivial connected sum decomposition over some algebra $\tilde{T}$.  This stands in direct contrast to connected sums over the ground field $T=\F$, where $\#_\F$-indecomposable AG algebras exist; see \cite[Theorem 8.3]{AAM} and also \cite[Proposition 3.1]{SmithStong2}.  On the other hand, Theorem \ref{thm:bdcs} shows that over general $T$, there may be no $\#_T$-indecomposable AG algebras.

\begin{example}
	\label{ex:13631}
	Set $F=Z^2XY-X^2Y^2$ so that the corresponding AG algebra is
	$$A=\frac{\F[x,y,z]}{\Ann(Z^2 XY-X^2Y^2)}=
	\frac{\F[x,y,z]}{(x^3,y^3,x^2z, y^2z,x(z^2+xy),y(z^2+xy),z^3)}$$
	with Hilbert function $H(A)=(1,3,6,3,1)$.  Set $G=1$ so that $T=\F$ and the Thom class for the projection $\pi\colon A\rightarrow \F$ is just the socle generator $\tau=z^2xy$.  Then we can set $f_R(\xi)=\xi^4-z^2xy$ which has $G$-dual $h_R(\xi)=1$.  Therefore we have $\tilde{G}=\Xi^3$, $\tilde{F}=(Z^2XY-X^2Y^2)-\Xi^4$, and $H=\Xi^4$.  Then 
	$$\tilde{T}=\frac{\F[x,y,z,\xi]}{\Ann(\Xi^3)}=\frac{\F[x,y,z]}{(x,y,z,\xi^4)}$$
	and 
	$$\tilde{A}=\frac{\F[x,y,z,\xi]}{\Ann(Z^2XY-X^2Y^2-\Xi^4)}=\frac{\F[x,y,z,\xi]}{(x^3,y^3,x^2z, y^2z,x(z^2+xy),y(z^2+xy),z^3, \xi x, \xi y, \xi z, \xi^4-z^2xy)}$$
	and 
	$$\tilde{B}=\frac{\F[x,y,z,\xi]}{\Ann(-\Xi^4)}=\frac{\F[x,y,z,\xi]}{(x,y,z,\xi^5)}.$$ 
	Since $\xi\circ \left(Z^2XY-X^2Y^2-\Xi^4\right)=-\Xi^3=\xi\circ \left(-\Xi^4\right)$, and also $\Ann(Z^2XY-X^2Y^2-\Xi^4)+\Ann(\Xi^4)=\Ann(\Xi^3)$, it follows that $A$ is a connected sum of $\tilde{A}$ and $\tilde{B}$ along $\tilde{T}$, as guaranteed by Theorem \ref{thm:bdcs}. 	
\end{example}

We conclude this section with an example that shows that unlike the blow up operation, the blow down operation may not always preserve the standard grading. 
\begin{example}
	\label{ex:nonst}
	Define the AG algebra
	$$A=\frac{\F[x,y,u]}{(x^2,u^2,xy, xu-yu,xu-y^3)}=\frac{\F[x,y,u]}{\Ann\left(XU+YU+Y^3\right)}$$
	with the non-standard grading $\deg(x)=\deg(y)=1$ and $\deg(u)=2$.  We can blow up along the projection 
	$$\pi\colon A\rightarrow T=\frac{\F[x]}{\Ann(X)}=\frac{F[x]}{(x^2)}$$
	with Thom class is $\tau=u-y^2$ and kernel $K=(y,u)$.  Then setting $f_R(\xi)=\xi^2-(u-y^2)$ (which has $G=X$-dual polynomial $h_R(\xi)=\xi^k$), the cohomological blow up of $A$ along $\pi$ is  
	$$\tilde{A}=\frac{\F[x,y,u,\xi]}{(x^2,u^2,xy,xu-yu,xu-y^3,\xi y,\xi u,\xi^2-(u-y^2))}=\frac{\F[x,y,u,\xi]}{\Ann(\Xi^2X-XU-YU-Y^3)}.$$
	Since $u\equiv \xi^2+y^2$ in $\tilde{A}$, we can eliminate $u$, and get 
	$$\tilde{A}=\frac{\F[x,y,\xi]}{(x^2,(\xi^2+y^2)^2,xy,(x-y)(\xi^2+y^2),x(\xi^2+y^2)-y^3,\xi y,\xi(\xi^2+y^2))}\cong \frac{\F[x,y,\xi]}{(x^2,\xi^3,xy,x\xi^2-y^3,y\xi)},$$
	which has a standard grading.
	\par Incidentally, $\tilde{A}$ is also the cohomological blow-up algebra of a \emph{standard graded} AG algebra $A'$ along a different $T'$, namely:
	$$A'=\frac{\F[x,\xi]}{(x^2,\xi^3)}\rightarrow T'=\F.$$
	Here the kernel is $K'=(x,\xi)$ and the Thom class is the socle generator $\tau=x\xi^2$, and taking $y$ as the ``blow up variable'' with $f_{R}(y)=y^3-x\xi^2$, we find that the blow up 
	$$\tilde{A'}=\frac{A[y]}{(yK',\overline{f(y)})}=\frac{\F[x,\xi,y]}{(x^2,\xi^3,yx,y\xi,y^3-x\xi^2)}\cong \tilde{A}.$$
\end{example}

\section{Minimal Generating Sets and Complete Intersections}
\label{sec:Ideal}
In this paper, a complete intersection (CI) is a quotient of a polynomial ring by an ideal generated by a regular sequence of maximal length.  At a cursory glance, the presentation in Construction \ref{con:hat} may lead one to believe that cohomological blow ups cannot be complete intersections, except in embedding dimension two where all AG algebras are CI, but the following example shows otherwise.
\begin{example}
	\label{ex:cibu}
Define the AG algebras and the surjective map between them 
$$A=\frac{\F[x,y]}{(x^3,y^3)}\overset{\pi}{\longrightarrow} T=\frac{\F[x,y]}{(x^3,y)}, \ \pi(x)=x, \ \pi(y)=0.$$
Then the kernel of $\pi$ is $K=(y)$ and the Thom class is $\tau=y^2$.  Here $d=4$, $k=2$ and $n=2$.  Letting $\xi$ be the blow up variable and $f_A(\xi)=\xi^2-y^2$, define the associated cohomological blow-up algebra as in Construction \ref{con:hat} as
$$\tilde{A}=\frac{A[\xi]}{(\xi\cdot K,f_A(\xi))}=\frac{\F[x,y,\xi]}{(x^3,y^3,\xi y,\xi^2-y^2)}$$
In this presentation the generator $y^3$ is redundant, and we see that $\tilde{A}$ is indeed a complete intersection of Hilbert function $H(\tilde{A})=(1,3,4,3,1)=H(A)+H(T)[1]$.
\end{example}
In this section we will show that Example \ref{ex:cibu} is prototypical of the class of BUG which are CI.  First we introduce yet another description of the cohomological blow-up in terms of its defining ideal.

As usual, let $R=\F[x_1,\ldots,x_r]$ be a graded polynomial ring with homogeneous maximal ideal $\mathfrak{m}=(x_1,\ldots,x_r)$.  Recall from Section \ref{sec:Thom} a homogeneous ideal $I\subseteq R$ is $\mathfrak{m}$-primary and irreducible if and only if the quotient $R/I$ is a graded AG algebra.  We abuse notation slightly and call the socle degree of such an ideal the socle degree of the corresponding quotient.
\begin{construction}
	\label{con:ideal}
	Fix an $\mathfrak{m}$-primary irreducible homogeneous ideal $I\subset R$ of socle degree $d$, and fix a homogeneous polynomial $\tau\in R$ of degree $n$ where $2\leq n<d-1$ and such that $I\subsetneq (I\colon\tau)\subsetneq R$.  Then the ideal $(I\colon\tau)$ is also homogeneous, $\mathfrak{m}$-primary, and irreducible of socle degree $k=d-n$; see Lemma \ref{lem:transition}.
	
	Let $\xi$ be an indeterminate, and fix a homogeneous monic polynomial $f_R(\xi)=\xi^n+r_1\xi^{n-1}+\cdots +r_n$ where $r_i\in R_i$.  
	
	Define the ideal $\hat{I}\subset R[\xi]=\hat{R}$ by 
	\begin{equation}
	\label{eq:hatII}
	\hat{I}=I\cdot R[\xi]+\xi\cdot(I\colon \tau)\cdot R[\xi]+f_R(\xi)\cdot R[\xi].
	\end{equation}
\end{construction}
We now describe some properties of the ideal \eqref{eq:hatII} of Construction \ref{con:ideal}.
	Since $I$ and $(I\colon\tau)$ are both $\mathfrak{m}$-primary and irreducible it follows that $A=R/I$ and $T=R/(I\colon\tau)$ are AG algebras, and since $I\subseteq (I\colon\tau)$, the identity map on $R$ passes to a surjective map of quotient algebras $\pi\colon A\rightarrow T$.  Moreover it follows from Lemma \ref{lem:Thomdual} that one can choose orientations on $A$ and $T$ such that $\overline{\tau}\in A_n$ is the Thom class of $\pi$.  Note that the kernel of $\pi$ is $K=(I\colon\tau)/I\subset R/I=A$.  It follows that if $f_A(\xi)=\overline{f_R(\xi)}\in A[\xi]=R[\xi]/I$, then $\hat{A}$ from Construction \ref{con:hat} satisfies
	\begin{equation}
	\label{eq:hatideal}
	\hat{A}=\frac{A[\xi]}{(\xi\cdot K,\overline{f_R(\xi)})}=\frac{R[\xi]}{I\cdot R[\xi]+\xi\cdot (I\colon\tau)\cdot R[\xi]+(f_R(\xi))\cdot R[\xi]}=\frac{\hat{R}}{\hat{I}}.
	\end{equation} 
	It follows therefore from Lemma \ref{thm:hatAGor} that the ideal $\hat{I}$ from Construction \ref{con:ideal} is $\hat{\mathfrak{m}}=(x_1,\ldots,x_r,\xi)$-primary irreducible if and only if the constant coefficient $r_n\in R_n$ of $f_R(\xi)$ satisfies $r_n-\lambda\cdot \tau\in I$ for some $\lambda\in\F^\times$.  In this case we shall replace the ``hat'' with ``tilde'' and call $\tilde{I}=\hat{I}$ the \emph{cohomological blow up ideal of $I$ and $\tau$}, as it is the defining ideal of the cohomological blow-up algebra $\tilde{A}$ of $A$ along $\pi$ with parameters $(\overline{r_1},\ldots,\overline{r_{n-1}},\lambda)$. 	
	
	It is also clear from Construction \ref{con:ideal}, and Lemma \ref{lem:Thomhat}, that the colon ideal $(\hat{I}\colon \xi)\subset\hat{R}$ satisfies
	\begin{equation}
	\label{eq:colonhat}
	(\hat{I}\colon \xi)=(I\colon\tau)+(f_T(\xi)=\overline{f_R(\xi)})
	\end{equation}
	and we have 
	\begin{equation}
	\label{eq:hatTIdeal}
	\tilde{T}=\frac{T[\xi]}{\left(f_T(\xi)\right)}=\frac{R[\xi]}{(I\colon\tau)\cdot R[\xi]+f_R(\xi)\cdot R[\xi]}=\frac{\hat{R}}{\left(\hat{I}\colon\xi\right)},
	\end{equation} 
	which is the the algebra from Construction \ref{con:hat} Equation \eqref{eq:hatT}.
	
	\subsection{Minimal Generating Sets}
	
		Next we would like to know how the minimal generators of the cohomological blow up ideal $\tilde{I}$ compare to those of $I$ and of $(I\colon \tau)$. We also determine the relations among these minimal generators.
We start by providing a lemma  that helps explain some of these relations.

		\begin{lemma}
		\label{lem:mzero}
		Let $B(\xi)=b_p\xi^p+\cdots+b_0$ and $C(\xi)=c_q\xi^q++\cdots+c_0$ be any homogeneous polynomials in $R[\xi]$ with homogeneous coefficients $b_i,c_i\in R$.  Let $J\subset R$ is any homogeneous ideal in $R$ and assume that $C(\xi)$ is monic, i.e. $c_q=1$.  If the product of $B$ and $C$ is in the ideal in $R[\xi]$ generated by $J$, i.e. 
		$$B(\xi)\cdot C(\xi)\in J\cdot R[\xi],$$
		then every coefficient of $B(\xi)$ must lie in $J$, i.e. $b_i\in J\cdot R$ for all $0\leq i\leq p$.
	\end{lemma}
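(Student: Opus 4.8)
The plan is to deduce the statement from the elementary fact that a monic polynomial over a commutative ring is a non-zerodivisor. First I would pass to the quotient ring $\bar{R}=R/J$ and use the natural identification $R[\xi]/(J\cdot R[\xi])\cong\bar{R}[\xi]$, under which the hypothesis $B(\xi)\cdot C(\xi)\in J\cdot R[\xi]$ becomes the equation $\bar{B}(\xi)\cdot\bar{C}(\xi)=0$ in $\bar{R}[\xi]$, where $\bar{B}(\xi)$ and $\bar{C}(\xi)$ are obtained by reducing all coefficients of $B(\xi)$ and $C(\xi)$ modulo $J$. Since the leading coefficient $c_q=1$ of $C(\xi)$ maps to $1\in\bar{R}$, the polynomial $\bar{C}(\xi)$ is still monic of $\xi$-degree $q$.

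Next I would check that $\bar{C}(\xi)$ is a non-zerodivisor in $\bar{R}[\xi]$. Indeed, if $\bar{B}(\xi)\neq 0$, let $\bar{b}_m$ denote the coefficient of the highest power of $\xi$ actually occurring in $\bar{B}(\xi)$, so $\bar{b}_m\neq 0$; then the coefficient of $\xi^{m+q}$ in the product $\bar{B}(\xi)\cdot\bar{C}(\xi)$ equals $\bar{b}_m\cdot 1=\bar{b}_m\neq 0$, contradicting $\bar{B}(\xi)\cdot\bar{C}(\xi)=0$. Hence $\bar{B}(\xi)=0$, which says precisely that every coefficient $b_i$ of $B(\xi)$ lies in $J$, as claimed.

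There is no real obstacle here: the argument is the classical leading-coefficient comparison underlying polynomial division with remainder, and one can equally well phrase it as an induction on the $\xi$-degree $p$ of $B(\xi)$ (comparing coefficients of $\xi^{p+q}$ gives $b_p\in J$ since $C$ is monic, and then $\bigl(B(\xi)-b_p\xi^p\bigr)\cdot C(\xi)$ still lies in $J\cdot R[\xi]$, so one recurses on a polynomial of smaller $\xi$-degree). The only point worth flagging is that the homogeneity of $B$ and $C$, and the fact that $\xi$ is a single indeterminate, play no role whatsoever — monicity of $C$ does all the work.
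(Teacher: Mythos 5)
Your proof is correct and is essentially the paper's argument, recast a bit more abstractly: the paper proves by hand the ingredient that $R[\xi]/(J\cdot R[\xi])\cong(R/J)[\xi]$ (evaluating at $\xi=0$ and using that $\xi$ is a non-zerodivisor on $J\cdot R[\xi]$) and then runs an explicit downward induction on coefficients, while you invoke that isomorphism directly and appeal to the standard fact that a monic polynomial over a commutative ring is a non-zerodivisor. The leading-coefficient comparison at the heart of both arguments is the same.
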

		\begin{proof}
		The key observation here is that if $D(\xi)=d_r\xi^r+\cdots+d_0$ is any polynomial in the ideal $J\cdot R[\xi]$, then its coefficients $d_i\in J\cdot R$.  Indeed, if $D(\xi)\in J\cdot R[\xi]$, plug in $\xi=0$ to see that $D(0)=d_0\in J\cdot R$.  Then $D(\xi)-d_0=D_1(\xi)=\xi\left(d_r\xi^{r-1}+\cdots+d_{1}\right)\in J\cdot R[\xi]$ and hence $d_r\xi^{r-1}+\cdots+d_{1}\in \left(J\cdot R[\xi]:\xi\right)=J\cdot R[\xi]$, since $\xi$ is a non-zero divisor of $J\cdot R[\xi]$.  Then plug in $\xi=0$ to see that $d_1\in J\cdot R$, and so on. 
		
		Therefore if the product  
		$$B(\xi)\cdot C(\xi)=\sum_{i=0}^{p+q}\left(b_0c_i+b_1c_{i-1}+\cdots+b_{i}c_0\right)\xi^{i}$$
		is in the ideal $J\cdot R[\xi]$, then each of its coefficients must be in $J\cdot R$, i.e.
		$$b_0c_i+\cdots+ b_ic_0\in J\cdot R \ \ \text{for each} \ 0\leq i\leq p+q.$$
		Taking first $i=p+q$, we find that $b_p\cdot c_q\in J\cdot R$ and since $c_q=1$ it follows that $b_p\in J\cdot R$.  Inductively assume that $b_{p-j+1},\ldots,b_p\in J\cdot R$.  Then taking $i=p+q-j$ we find that 
		$$b_pc_{q-j}+b_{p-1}c_{q-j+1}+\cdots+b_{p-j+1}c_{q-1}+b_{p-j}c_q\in J\cdot R$$
		from which it follows that $b_{p-j}c_{q}=b_{p-j}\in J\cdot R$ as well.  Therefore by induction, all coefficients $b_j$ are in $J\cdot R$.
	\end{proof}
	
	We can now provide a short exact sequence that determines the relations among the obvious (not necessarily minimal) set of generators of the cohomological blow up ideal $\tilde{I}$.
	
	\begin{proposition}
	\label{prop:ses}
	Let $\tilde{I}=I \cdot R[\xi]+\xi\cdot (I:\tau) \cdot R[\xi]+(f_R(\xi)) $ be the ideal described in Construction \ref{con:ideal}, let $I'=I\cdot R[\xi], K'=(I:\tau)\cdot R[\xi]$ and let $g=(f_R(\xi)-\lambda\tau)/\xi$. There is a short exact sequence of graded $R[\xi]$-modules
	\begin{equation}
	\label{eq:ses}
	0\to I'(-1) \oplus K'(-n) \xrightarrow{\begin{bmatrix} \xi & \lambda\tau \\ -1 & g \\ 0 & -1\end{bmatrix}} I'\oplus K'(-1)\oplus R[\xi]  \xrightarrow{\begin{bmatrix} 1 & \xi & f_R \end{bmatrix}} \hat{I}\to 0.
	\end{equation} 
	\end{proposition}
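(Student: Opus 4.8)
The plan is to check, in order, that the displayed sequence is a complex, that the right-hand map $\begin{bmatrix}1&\xi&f_R\end{bmatrix}$ is surjective, that the left-hand map is injective, and that the sequence is exact in the middle; the grading shifts are arranged precisely so that all three matrix maps are homogeneous of degree zero (multiplication by $\lambda\tau$ raises degree by $n$, by $g$ by $n-1$, and by $f_R$ by $n$), which I will treat as routine bookkeeping. At the outset I would note that we may assume $r_n=\lambda\tau$ on the nose: replacing $f_R(\xi)$ by $f_R(\xi)-(r_n-\lambda\tau)$ changes $f_R$ only by an element of $I$, hence leaves $\hat I$ unchanged, and makes $g=(f_R(\xi)-\lambda\tau)/\xi$ an honest element of $R[\xi]$ with $\xi g=f_R-\lambda\tau$.

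The three easy points come first. The composite of the two maps vanishes because $1\cdot\xi+\xi\cdot(-1)=0$ in the first column and $1\cdot\lambda\tau+\xi\cdot g+f_R\cdot(-1)=0$ in the second, the latter by the definition of $g$. Surjectivity of $\begin{bmatrix}1&\xi&f_R\end{bmatrix}$ onto $\hat I$ is immediate from the description $\hat I=I'+\xi K'+(f_R)$. For injectivity of the left map, if $(u,v)$ maps to zero, then its third coordinate $-v$ is zero, and then its second coordinate $-u$ is zero, so $(u,v)=0$.

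The heart of the matter is exactness at the middle term. Suppose $(a,b,c)\in I'\oplus K'(-1)\oplus R[\xi]$ satisfies $a+\xi b+f_R c=0$ in $R[\xi]$. The second and third coordinates of the left map force any preimage to be $v:=-c$ and $u:=-b-gc$, so the task reduces to showing $c\in K'$ and $u\in I'$ (after which one checks that $(u,v)$ really does map to $(a,b,c)$). For $c\in K'$: from the relation, $f_R c=-a-\xi b$, and the right side lies in $K'=(I:\tau)R[\xi]$ because $a\in I'\subseteq K'$ and $\xi b\in K'$; since $f_R$ is monic, Lemma \ref{lem:mzero} applied with the polynomial $c$, the monic polynomial $f_R$, and the ideal $(I:\tau)$ forces every coefficient of $c$ into $(I:\tau)$, i.e.\ $c\in K'$. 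For $u\in I'$: using $\xi g=f_R-\lambda\tau$, one computes $\xi u=-\xi b-\xi g c=-\xi b-f_R c+\lambda\tau c=a+\lambda\tau c$, which lies in $I'$ because $a\in I'$ and $\tau c\in\tau(I:\tau)R[\xi]\subseteq I\cdot R[\xi]$ (using $c\in K'$); since $R[\xi]/I'\cong(R/I)[\xi]$ has $\xi$ as a nonzerodivisor, $\xi u\in I'$ yields $u\in I'$. Finally, substituting back, the left map sends $(u,v)$ to $\bigl(\xi u+\lambda\tau v,\ -u+gv,\ -v\bigr)=\bigl(a+\lambda\tau c-\lambda\tau c,\ b,\ c\bigr)=(a,b,c)$.

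I expect the one genuinely non-formal step to be $c\in K'$ — deducing membership of the quotient in $(I:\tau)$ from a monic multiple lying in $(I:\tau)R[\xi]$, which is exactly what Lemma \ref{lem:mzero} is designed for; everything else is forced once one takes $v=-c$ and uses the relation $\xi g=f_R-\lambda\tau$. A small but essential second ingredient is that $\xi$ is a nonzerodivisor modulo $I\cdot R[\xi]$, which upgrades $\xi u\in I'$ to $u\in I'$.
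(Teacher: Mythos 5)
Your proof is correct and follows essentially the same route as the paper: verify the complex condition, surjectivity, and injectivity directly, then apply Lemma \ref{lem:mzero} to force $c\in K'$, and use the identity $\xi g=f_R-\lambda\tau$ together with the fact that $\xi$ is a nonzerodivisor modulo $I'$ to place the remaining coordinate in $I'$. Your preliminary normalization $r_n=\lambda\tau$ (so that $g$ is a genuine element of $R[\xi]$) and your explicit invocation of the nonzerodivisor property of $\xi$ modulo $I'$ are small but welcome clarifications of steps the paper leaves implicit.
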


\begin{proof}
The definition of $\tilde{I}$ yields surjectivity of the rightmost nonzero map, and the injectivity of  the leftmost nonzero map is clear from its description (note there is a unit entry in each column). That \eqref{eq:ses}  is a complex is seen by  matrix multiplication and utilization of $f=g\xi+\lambda\tau$.

It remains to prove exactness in the middle of \eqref{eq:ses}. 
For this, consider a triple $(\alpha(\xi), \beta(\xi), \gamma(\xi)) \in I'\oplus K'(-1)\oplus R[\xi] $ so that 
\begin{equation}
\label{eq:syz}
\alpha(\xi)+\beta(\xi)\xi+\gamma(\xi)f_R(\xi)=0.
\end{equation}
Since $I'\subseteq K'$ we have $\alpha(\xi)+\beta(\xi)\xi\in K'$, so that also $\gamma(\xi)f_R(\xi)\in K'$ and by Lemma \ref{lem:mzero} it follows that $\gamma(\xi)\in K'$ since $f_R(\xi)$ is monic. Adding the relation
\[
\lambda\tau\gamma(\xi)+g(\xi)\gamma(\xi)-f_R(\xi)\gamma(\xi)=0,
\]
to \eqref{eq:syz} yields $(\alpha(\xi)-\lambda\tau\gamma(\xi))+(\beta(\xi)-g(\xi)\gamma(\xi))\xi=0$. Since $\gamma(\xi)\in K'=(I:\tau)\cdot R[\xi]$, we have $\lambda\tau\gamma(\xi)\in I\cdot R[\xi]=I'$ and thus $\beta'(\xi):=(\beta(\xi)-g(\xi)\gamma(\xi) \in I'$. We have thus obtained the identity
\[
\begin{bmatrix}
\alpha(\xi)\\ \beta(\xi)\\ \gamma(\xi)
\end{bmatrix}
=
-\beta'(\xi)
\begin{bmatrix}
\xi \\ -1\\ 0
\end{bmatrix}
-\gamma(\xi)
\begin{bmatrix}
\lambda\tau \\ g\\ -1
\end{bmatrix},
\]
where $\beta'(\xi)\in I'$ and $\gamma(\xi)\in K'$, establishing the desired exactness.
\end{proof}

Based on the presentation in Proposition \ref{prop:ses}, one can infer a minimal generating set for $\tilde{I}$. Setting $\hat{\fm}$ to be the homogeneous maximal ideal of $R[\xi]$, tensoring the short exact sequence \eqref{eq:ses} with $R[\xi]/\hat{\fm}$, and observing that there are isomorphisms $I'/\hat{\fm}I'\cong I/\fm I$ and $K'/\hat{\fm}K'\cong (I:\tau)/\fm (I:\tau)$ we obtain a new exact sequence of $\F$-vector spaces
\begin{equation}
\label{eq:ses2}
\frac{I}{\fm I}(-1) \oplus \frac{(I:\tau)}{\fm(I:\tau)} (-n) \xrightarrow[\psi_1]{\begin{bmatrix} 0 & \lambda\phi_2 \\ -\phi_1 & 0 \\ 0 & 0\end{bmatrix}} \frac{I}{\fm I}\oplus \frac{(I:\tau)}{\fm(I:\tau)}(-1)\oplus \F  \xrightarrow[\psi_2]{\begin{bmatrix} 1 & \xi & f_R \end{bmatrix}} \frac{\tilde{I}}{\hat{\fm}\tilde{I}}\to 0.
\end{equation}
The zero entries in the first matrix are due to the containments $\tau I'\subseteq \hat{\fm}I'$, $g I'\subseteq \hat{\fm}K'$, and $\xi I'\subseteq \hat{\fm}I'$.
We discuss the remaining maps $\phi_1$ and $\phi_2$ -- the former is induced by inclusion and the other by multiplication by $\tau$.  These maps fit into the sequence
	\begin{equation}
	\label{eq:cpx1}
	\xymatrix{(I\colon\tau)/\mathfrak{m}(I\colon\tau)\ar[r]^-{\times\tau}_-{\phi_2} & I/\mathfrak{m}I\ar[r]_-{\phi_1} & (I\colon\tau)/\mathfrak{m}(I\colon\tau)\ar[r]_-{\phi_2}^-{\times \tau} & I/\mathfrak{m}I.}
	\end{equation}
	Note this sequence of maps \eqref{eq:cpx1} forms a complex, i.e. $\phi_1\circ\phi_2=0$ and $\phi_2\circ\phi_1=0$, hence there are homology groups 
	\begin{equation}
	\label{eq:homology}
	H=\frac{\ker(\phi_1)}{\operatorname{im}(\phi_2)} \ \ \text{and} \ \ H'=\frac{\ker(\phi_2)}{\operatorname{im}(\phi_1)}.
	\end{equation}
	
	We will see that these homology groups $H$ and $H'$ measure the difference between minimal generating sets of $\tilde{I}$ and those of $I$ or $(I\colon\tau)$.  In fact, we will see that these homology groups are obstructions to $\tilde{I}$ being generated by a regular sequence.

	Using \eqref{eq:ses2}, we can express a minimal generating set of $\tilde{I}$ in terms of the minimal generating sets of $I$ and $(I:\tau)$ and the homology groups $H$ and $H'$.
	
	\begin{theorem}
		\label{thm:mingens}
Let $\phi_1, \phi_2$ be defined as in \eqref{eq:homology} and consider the vectors space decompositions
\begin{align}
\label{eq:It}
	\frac{I}{\mathfrak{m}I}= & U\oplus\underbrace{\operatorname{im}(\phi_2)\oplus H}_{\ker(\phi_1)} & 
	\frac{(I\colon \tau)}{\mathfrak{m}(I\colon\tau)}= & W\oplus \underbrace{\operatorname{im}(\phi_1)\oplus H'}_{\ker(\phi_2)}
	\end{align} 
	where $U$ and $W$ are some (non canonical) complements for $\ker(\phi_1)$ and $\ker(\phi_2)$ respectively.
		Then the vector space spanned by the minimal generators for $\hat{I}$ decomposes as 
		\begin{equation}
		\label{eq:hatmg}
		\frac{\tilde{I}}{\hat{\mathfrak{m}}\tilde{I}}\cong U\oplus H \oplus \xi W\oplus \xi H'\oplus \langle f_R(\xi)\rangle.\end{equation} 
	\end{theorem}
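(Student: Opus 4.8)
The plan is to read off \eqref{eq:hatmg} as a piece of linear algebra extracted from the presentation \eqref{eq:ses2}: once we recognize that $\tilde{I}/\hat{\fm}\tilde{I}$ is the cokernel of $\psi_1$, everything reduces to describing $\operatorname{im}(\psi_1)$ and choosing complements. Recall that \eqref{eq:ses2} was obtained by applying $-\otimes_{R[\xi]}R[\xi]/\hat{\fm}$ to the short exact sequence \eqref{eq:ses} of Proposition \ref{prop:ses}, together with the identifications $I'/\hat{\fm}I'\cong I/\fm I$ and $K'/\hat{\fm}K'\cong (I\colon\tau)/\fm(I\colon\tau)$. Since base change is right exact, \eqref{eq:ses2} is exact at its middle term, hence
\[
\frac{\tilde{I}}{\hat{\fm}\tilde{I}}\;\cong\;\operatorname{coker}(\psi_1).
\]

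First I would compute $\operatorname{im}(\psi_1)$ directly from the displayed matrix of $\psi_1$. Its three vanishing entries are exactly the containments $\xi I'\subseteq\hat{\fm}I'$, $gI'\subseteq\hat{\fm}K'$ and $(I\colon\tau)R[\xi]\subseteq\hat{\fm}R[\xi]$ noted right after \eqref{eq:ses2}, so that $\psi_1$ acts by $(u,w)\mapsto(\lambda\phi_2(w),\,-\phi_1(u),\,0)$. Consequently
\[
\operatorname{im}(\psi_1)\;=\;\operatorname{im}(\phi_2)\ \oplus\ \xi\operatorname{im}(\phi_1)\ \oplus\ 0
\]
inside $\frac{I}{\fm I}\oplus\frac{(I\colon\tau)}{\fm(I\colon\tau)}(-1)\oplus\F$, where $\operatorname{im}(\phi_2)\subseteq\frac{I}{\fm I}$ and $\operatorname{im}(\phi_1)$ sits in the $\xi$-shifted summand $\frac{(I\colon\tau)}{\fm(I\colon\tau)}(-1)$. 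Passing to the cokernel gives
\[
\frac{\tilde{I}}{\hat{\fm}\tilde{I}}\;\cong\;\frac{I/\fm I}{\operatorname{im}(\phi_2)}\ \oplus\ \xi\cdot\frac{(I\colon\tau)/\fm(I\colon\tau)}{\operatorname{im}(\phi_1)}\ \oplus\ \langle f_R(\xi)\rangle,
\]
the last summand being the image under $\psi_2$ of the generator $1\in\F$, i.e.\ the class of $f_R(\xi)$.

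It then remains to substitute the splittings \eqref{eq:It}. Over the field $\F$, the subspace $\operatorname{im}(\phi_2)\subseteq\ker(\phi_1)$ has a complement isomorphic to $H=\ker(\phi_1)/\operatorname{im}(\phi_2)$ and $U$ is a complement of $\ker(\phi_1)$ inside $I/\fm I$, so $\frac{I/\fm I}{\operatorname{im}(\phi_2)}\cong U\oplus H$; symmetrically $\frac{(I\colon\tau)/\fm(I\colon\tau)}{\operatorname{im}(\phi_1)}\cong W\oplus H'$ with $H'=\ker(\phi_2)/\operatorname{im}(\phi_1)$ and $W$ the chosen complement of $\ker(\phi_2)$. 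Inserting these into the previous display, and carrying the $\xi$-shift through so that the second factor reads $\xi W\oplus\xi H'$, produces precisely the decomposition \eqref{eq:hatmg}.

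I do not expect a real obstacle: all the weight is carried by Proposition \ref{prop:ses}, and the rest is bookkeeping with cokernels over a field. The two spots where a little care pays off are (i) confirming that the $\F$-summand of $\operatorname{coker}(\psi_1)$ genuinely survives — immediate, because the last coordinate of $\operatorname{im}(\psi_1)$ is $0$, so $f_R(\xi)$ is a true minimal generator and is not absorbed into relations among the generators inherited from $I$ and $(I\colon\tau)$ — and (ii) tracking the internal degree shifts, so that the $W\oplus H'$ contribution lands in $\xi$-degree one exactly as written in \eqref{eq:hatmg}.
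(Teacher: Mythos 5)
Your proposal is correct and takes essentially the same approach as the paper: the paper also observes that exactness of \eqref{eq:ses2} gives $\tilde{I}/\hat{\mathfrak m}\tilde{I}\cong\operatorname{coker}(\psi_1)$ with $\operatorname{im}(\psi_1)=\operatorname{im}(\phi_2)\oplus\operatorname{im}(\phi_1)\oplus 0$, and then reads off the decomposition from the splittings \eqref{eq:It}. Your write-up is slightly more explicit about the computation of $\operatorname{im}(\psi_1)$ and the degree shift, but there is no difference in substance.
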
	
	\begin{proof}
	This follows almost immediately from exactness of Sequence \eqref{eq:ses2}:  Since $\ker(\psi_2)=\operatorname{im}(\psi_1)=\operatorname{im}(\phi_2)\oplus\operatorname{im}(\phi_1)\subset I/\mathfrak{m}I\oplus (I:\tau)/\mathfrak{m}(I:\tau)\oplus\F$ is a direct summand, and since $\psi_2$ is surjective, it passes to an isomorphism 
	$$\frac{\hat{I}}{\hat{\mathfrak{m}}\hat{I}}\cong \frac{\frac{I}{\fm I}\oplus \frac{(I:\tau)}{\fm(I:\tau)}(-1)\oplus \F}{\ker(\psi_2)} \cong U\oplus \oplus H\oplus W\oplus H'\oplus \F$$
	and the result follows.
\end{proof}
	
	For a homogeneous ideal $J\subset R$ we denote by $\mu(J)=\dim_\F\left(J/\mathfrak{m}J\right)$ the number of minimal generators of $J$. The following corollary is an immediate consequence of Theorem \ref{thm:mingens}, and we omit the proof.
	\begin{corollary}
		\label{cor:mingenform}
		With $I$, $\tau$, $(I\colon\tau)$ and $\hat{I}$ as above we have 
		$$\mu(\tilde{I})=\mu(I)+\dim(H')+1=\mu(I\colon\tau)+\dim(H)+1.$$
	\end{corollary}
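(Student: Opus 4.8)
The plan is to extract the corollary as a direct consequence of the isomorphism \eqref{eq:hatmg} in Theorem \ref{thm:mingens}, combined with the rank--nullity theorem applied to the maps $\phi_1$ and $\phi_2$ from the complex \eqref{eq:cpx1}. First I would take dimensions on both sides of \eqref{eq:hatmg}, which gives
\[
\mu(\tilde{I}) = \dim_\F U + \dim_\F H + \dim_\F W + \dim_\F H' + 1.
\]
Separately, the vector space decompositions \eqref{eq:It} yield $\mu(I) = \dim_\F U + \dim_\F\operatorname{im}(\phi_2) + \dim_\F H$ and $\mu(I\colon\tau) = \dim_\F W + \dim_\F\operatorname{im}(\phi_1) + \dim_\F H'$, using that $\ker(\phi_1) = \operatorname{im}(\phi_2)\oplus H$ and $\ker(\phi_2)=\operatorname{im}(\phi_1)\oplus H'$.

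The one substantive observation needed is that $\dim_\F U = \dim_\F\operatorname{im}(\phi_1)$ and $\dim_\F W = \dim_\F\operatorname{im}(\phi_2)$. This is immediate from rank--nullity: since $U$ is a complement of $\ker(\phi_1)$ inside $I/\mathfrak{m}I$, the map $\phi_1$ restricts to an isomorphism $U \xrightarrow{\ \cong\ } \operatorname{im}(\phi_1)$, and likewise $\phi_2$ restricts to an isomorphism $W \xrightarrow{\ \cong\ } \operatorname{im}(\phi_2)$. Substituting $\dim_\F W = \dim_\F\operatorname{im}(\phi_2)$ into the displayed formula for $\mu(\tilde{I})$ collapses it to $\mu(\tilde{I}) = \bigl(\dim_\F U + \dim_\F\operatorname{im}(\phi_2) + \dim_\F H\bigr) + \dim_\F H' + 1 = \mu(I) + \dim_\F H' + 1$, while substituting $\dim_\F U = \dim_\F\operatorname{im}(\phi_1)$ instead gives $\mu(\tilde{I}) = \mu(I\colon\tau) + \dim_\F H + 1$.

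I do not expect any serious obstacle here; the content is entirely bookkeeping with the exact sequence \eqref{eq:ses2} already in hand. The only point requiring care is that the complements $U$ and $W$ in \eqref{eq:It} are non-canonical, so the argument should be phrased in terms of their dimensions rather than treating them as fixed subspaces, and one should note that the equalities $\dim_\F U = \dim_\F\operatorname{im}(\phi_1)$ and $\dim_\F W = \dim_\F\operatorname{im}(\phi_2)$ hold for \emph{any} admissible choice of complement, so the final count is independent of these choices.
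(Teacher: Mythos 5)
Your proof is correct and fills in the bookkeeping that the paper explicitly omits (the paper states the corollary is "an immediate consequence" of Theorem~\ref{thm:mingens}). The key observation — that $\phi_1$ restricts to an isomorphism $U\xrightarrow{\cong}\operatorname{im}(\phi_1)$ and $\phi_2$ to $W\xrightarrow{\cong}\operatorname{im}(\phi_2)$, so the dimensions transfer — is exactly what makes the count work, and your care about the non-canonicity of $U$ and $W$ is well placed.
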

	Note that in our set up of Construction \ref{con:ideal}, $\mu(\tilde{I})\geq r+1=\dim(\hat{R})$ with equality if and only if $\tilde{I}$ is generated by a regular sequence.  In particular, Corollary \ref{cor:mingenform} shows that the vanishing of homology groups $H$ and $H'$ from \eqref{eq:homology} is necessary for the cohomological blow up ideal to be generated by a regular sequence, and hence for the BUG quotient $\tilde{A}=\hat{R}/\tilde{I}$ to be a CI.  However, the following example shows that the vanishing of $H$ and $H'$ is not quite sufficient for that purpose.  
	
\begin{example}
		\label{ex:zeroH}
		Let $I=\left(x^4, y^4, zx^2, zy^2, z^4-x^2y^2\right)$, and $\tau=z^2-xy$ so that $(I\colon\tau)=\left(x^4,y^4,zx^2,zy^2,z^2+xy\right)$.  Then taking $\hat{R}=R[\xi]=\F[x,y,z][\xi]$ and $f_R(\xi)=\xi^2-\tau$ the cohomological blow up ideal is
		$$\tilde{I}=\left(x^4,y^4,zx^2,z y^2,\xi(z^2+xy),\xi^4-(z^2-xy)\right),$$
		Hence the cohomological blow up $\tilde{A}=\hat{R}/\tilde{I}$ is not a CI.  Note that in this case the homology groups $H$ and $H'$ are both zero.
	\end{example}

	Note in Example \ref{ex:zeroH} that the colon ideal $\left(\tilde{I}\colon \xi\right)=\tilde{I}+(z^2+xy)$ is principal over $\tilde{I}$.  This is related to exact pairs of zero divisors, which allow a complete characterization of BUGs that are CI.  
	
\subsection{Complete Intersection Blow-up Algebras and Exact Zero Divisors}
Below we give a necessary and sufficient condition for a cohomological blow up algebra to be a complete intersection based on exact zero divisors, a notion introduced by I.~Henriques and L.~\c Sega \cite{HS}, which is defined as follows:

\begin{definition}
\label{def:exactzd}
A pair of non-unit elements $a,b$ of a ring $A$ is an {\em exact pair of zero divisors} if $(0 : _A a)= b\cdot A$ and $(0 : _A b) = a\cdot A$.
\end{definition}

\begin{example}
\label{ex:exact}
\begin{enumerate}
	\item In Example \ref{ex:cibu} $a=\overline{y^2}$ and $b=\overline{y}$ form an exact pair of zero divisors on 
	$$A=\frac{\F[x,y]}{(x^3,y^3)}.$$

	\item In Example \ref{ex:zeroH}, $a=\overline{z^2-xy}$ and $b=\overline{z^2+xy}$ form an exact pair of zero divisors on 
	$$A=\frac{\F[x,y,z]}{\left(x^4, y^4, zx^2, zy^2, z^4-x^2y^2\right)}.$$  
\end{enumerate}
\end{example}

If $A$ is Artinian then it suffices to check only one of the conditions in Definition \ref{def:exactzd}. 
\begin{lemma}
	\label{lem:one}
If $A$ is a graded Artinian $\F$-algebra then homogeneous elements $a,b\in A$ of positive degree form an exact pair of zero divisors of $A$ provided that $(0:_A a)=b\cdot A$.
\end{lemma}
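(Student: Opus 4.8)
The plan is to assume $(0:_A a) = b\cdot A$ and deduce the reverse condition $(0:_A b) = a\cdot A$. One containment is immediate: since $ab = 0$ (because $b \in (0:_A a)$), we have $a \cdot A \subseteq (0:_A b)$. So the work is entirely in proving $(0:_A b) \subseteq a\cdot A$, i.e., that every element annihilated by $b$ is a multiple of $a$.

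The key tool I would use is a dimension count via exact sequences of multiplication maps. Consider the maps $A \xrightarrow{\cdot a} A$ and $A \xrightarrow{\cdot b} A$. The hypothesis $(0:_A a) = bA$ says that the sequence $A \xrightarrow{\cdot b} A \xrightarrow{\cdot a} A$ is exact at the middle term, which gives $\dim_\F(aA) = \dim_\F A - \dim_\F(bA)$ (since $aA \cong A/(0:_A a) = A/bA$). Symmetrically, whatever I can say about $(0:_A b)$: I always have $bA \cong A/(0:_A b)$, so $\dim_\F(bA) = \dim_\F A - \dim_\F(0:_A b)$. Combining, $\dim_\F(0:_A b) = \dim_\F A - \dim_\F(bA) = \dim_\F(aA)$. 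Since $aA \subseteq (0:_A b)$ and the two have the same finite $\F$-dimension (finiteness is where Artinian-ness enters), they must be equal. That completes the proof.

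The main subtlety — and the step I'd be most careful about — is the exactness claim $\dim_\F(bA) = \dim_\F A - \dim_\F(0:_A b)$, which is just the rank-nullity / first isomorphism theorem for the $\F$-linear map $\cdot b\colon A \to A$, valid because $A$ is a finite-dimensional $\F$-vector space; and likewise interpreting the hypothesis $(0:_A a) = bA$ correctly as the statement that $\ker(\cdot a) = \operatorname{im}(\cdot b)$. One should also note that homogeneity and positive degree of $a, b$ guarantee these are not units (so the data genuinely gives a pair of \emph{zero divisors}), though that plays no role in the dimension count itself. I expect no real obstacle here; the lemma is a clean linear-algebra observation once the exact-sequence bookkeeping is set up, and the Artinian hypothesis is exactly what makes all the dimensions finite so that equality of subspace and ambient space follows from equality of dimensions.

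\begin{proof}
Since $A$ is graded Artinian, it is finite-dimensional as an $\F$-vector space. The hypothesis $ab = 0$ (which holds because $b \in (0:_A a)$) gives $a\cdot A \subseteq (0:_A b)$, so it remains to prove the reverse inclusion. View multiplication by $a$ and by $b$ as $\F$-linear endomorphisms of $A$. The first isomorphism theorem gives $b\cdot A \cong A/(0:_A b)$ and $a\cdot A \cong A/(0:_A a) = A/(b\cdot A)$, so
\[
\dim_\F(a\cdot A) = \dim_\F A - \dim_\F(b\cdot A) = \dim_\F A - \bigl(\dim_\F A - \dim_\F(0:_A b)\bigr) = \dim_\F(0:_A b).
\]
Thus $a\cdot A \subseteq (0:_A b)$ is an inclusion of finite-dimensional $\F$-vector spaces of equal dimension, hence an equality. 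Therefore $(0:_A b) = a\cdot A$, and together with the hypothesis $(0:_A a) = b\cdot A$ this shows $a, b$ form an exact pair of zero divisors.
\end{proof}
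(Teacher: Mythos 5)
Your proof is correct and follows essentially the same route as the paper's: both establish $a\cdot A \subseteq (0:_A b)$ from $ab=0$, then use the first isomorphism theorem (equivalently, rank–nullity for the multiplication maps) together with the hypothesis $(0:_A a)=bA$ to show $\dim_\F(a\cdot A) = \dim_\F(0:_A b)$, concluding equality from finite-dimensionality. The only cosmetic difference is that the paper phrases the dimension count via the identity $\dim_\F A = \dim_\F(A/bA) + \dim_\F(bA)$ rather than applying rank–nullity directly to $\cdot b$, but the content is identical.
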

\begin{proof}
From  $(0:_A a)=b\cdot A$ we deduce there is an $A$-module isomorphism $A/(b)=A/(0:_A a)\cong a\cdot A$ by means of the diagram
$$\xymatrix{0\ar[r] & A/(0:_Aa)\ar@{-->}[d]_-{\cong}\ar[r]^-{\times a} &  A \ar[d]^-{=} \ar[r] &  A/(a) \ar[d]^-{=} \ar[r] &  0\\
0\ar[r] & a\cdot A \ar[r] & A \ar[r] & A/(a) \ar[r] & 0\\}
$$
Similarly, there is an isomorphism $A/(0:_A b)\cong b\cdot A$.
Moreover, the hypothesis yields $ab=0$ and thus $a\cdot A\subseteq (0:_A b)$.
To see that this containment is in fact an equality we compute
\[
\dim_\F(A) = \dim_\F(A/(b)) + \dim_\F(b\cdot A) = \dim_\F(a\cdot A) + \dim_\F (A)- \dim_\F (0:_A b)
\]
whence $\dim_\F(a)=\dim_\F (0:_A b)$, and hence $a\cdot A=(0:_Ab)$ as desired.
\end{proof}

The next lemma relates exact zero divisors to the homology group $H$ in \eqref{eq:homology}.
\begin{lemma}
	\label{lem:exactH}
	Let $A=R/I$ and $T=R/(I\colon\tau)$ be AG algebras, and suppose that $\overline{\tau}\in A_n$ is part of a pair of exact zero divisors on $A$.  Then the homology groups $H$ and $H'$  from \eqref{eq:homology} vanish. 
\end{lemma}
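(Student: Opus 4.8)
The plan is to translate the exact-zero-divisor hypothesis into two ideal-theoretic identities in $R$, use them to compute $\phi_1$ and $\phi_2$ explicitly, and thereby reduce the whole statement to the single assertion that a certain product of lifts is a minimal generator of $I$.

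First I would set things up. Let $\overline{\tau'}\in A$ be such that $\overline{\tau},\overline{\tau'}$ is an exact pair of zero divisors, and fix homogeneous lifts $\tau,\tau'\in R$, with $\deg\tau=n$ and $\deg\tau'=q\geq 1$. Since $\overline{\tau}\,\overline{\tau'}=0$ we have $\tau\tau'\in I$, and the conditions $(0:_A\overline{\tau})=\overline{\tau'}A$, $(0:_A\overline{\tau'})=\overline{\tau}A$ become
\[(I\colon\tau)=I+(\tau')\qquad\text{and}\qquad (I\colon\tau')=I+(\tau).\]
From these I would record two identities. Since $R$ is a domain, $I\cap(\tau)=\tau(I\colon\tau)=\tau I+(\tau\tau')$. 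And, using the modular law together with $\tau'\in\mathfrak{m}$ and $(I\colon\tau')=I+(\tau)\subseteq\mathfrak{m}$,
\[I\cap\mathfrak{m}(I\colon\tau)=I\cap\bigl(\mathfrak{m}I+\mathfrak{m}(\tau')\bigr)=\mathfrak{m}I+\bigl(I\cap\mathfrak{m}(\tau')\bigr)=\mathfrak{m}I+\tau'\bigl(I+(\tau)\bigr)=\mathfrak{m}I+(\tau\tau'),\]
where in the last step $\tau'I\subseteq\mathfrak{m}I$ because $\tau'\in\mathfrak{m}$.

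Granting these, $H=0$ is immediate: as $\tau\in\mathfrak{m}$ we have $\tau I\subseteq\mathfrak{m}I$, so the first identity gives $\operatorname{im}(\phi_2)=\bigl(\tau(I\colon\tau)+\mathfrak{m}I\bigr)/\mathfrak{m}I=\bigl(\mathfrak{m}I+(\tau\tau')\bigr)/\mathfrak{m}I$, while the second gives $\ker(\phi_1)=\bigl(I\cap\mathfrak{m}(I\colon\tau)+\mathfrak{m}I\bigr)/\mathfrak{m}I=\bigl(\mathfrak{m}I+(\tau\tau')\bigr)/\mathfrak{m}I$; hence $\ker(\phi_1)=\operatorname{im}(\phi_2)$ (note this does not use whether $\overline{\tau\tau'}$ is zero in $I/\mathfrak{m}I$). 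For $H'$, since $\mathfrak{m}I\subseteq I$ we have $(\mathfrak{m}I\colon\tau)\subseteq(I\colon\tau)$, so $\ker(\phi_2)=\bigl((\mathfrak{m}I\colon\tau)+\mathfrak{m}(I\colon\tau)\bigr)/\mathfrak{m}(I\colon\tau)$ and $\operatorname{im}(\phi_1)=\bigl(I+\mathfrak{m}(I\colon\tau)\bigr)/\mathfrak{m}(I\colon\tau)$; thus $H'=0$ is equivalent to $(\mathfrak{m}I\colon\tau)\subseteq I+\mathfrak{m}(I\colon\tau)$. Given homogeneous $g\in(\mathfrak{m}I\colon\tau)$, write $g=i+c\tau'$ with $i\in I$ and $c\in R$ homogeneous; then $c\,\tau\tau'=\tau g-\tau i\in\mathfrak{m}I$ since $\tau i\in\tau I\subseteq\mathfrak{m}I$. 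If $\tau\tau'\notin\mathfrak{m}I$, then $(\mathfrak{m}I\colon\tau\tau')$ is a proper homogeneous ideal containing $\mathfrak{m}$, hence equals $\mathfrak{m}$, so $c\in\mathfrak{m}$ and $g\in I+\mathfrak{m}(\tau')\subseteq I+\mathfrak{m}(I\colon\tau)$. So the entire Lemma reduces to the single claim $\tau\tau'\notin\mathfrak{m}I$, equivalently that $\overline{\tau\tau'}$ is a minimal generator of $I$ (a condition independent of the chosen lifts).

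This last claim is the heart of the matter and the step I expect to be the main obstacle. My approach would be: observe that $(\mathfrak{m}I\colon\tau)$ contains $I+\mathfrak{m}(I\colon\tau)$ and that its image $(\mathfrak{m}I\colon\tau)/I$ is an $A$-submodule of the cyclic module $K=(0:_A\overline{\tau})=\overline{\tau'}A$ containing $\mathfrak{m}_AK$; since $K/\mathfrak{m}_AK$ is one-dimensional (generated by $\overline{\tau'}$ in degree $q$), this image is either $\mathfrak{m}_AK$ — equivalently $(\mathfrak{m}I\colon\tau)=I+\mathfrak{m}(I\colon\tau)$, which is exactly what we want — or all of $K$, equivalently $\tau\tau'\in\mathfrak{m}I$. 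To exclude the second alternative I would use the second exact-pair relation $(I\colon\tau')=I+(\tau)$ together with the fact that $T'=R/(I\colon\tau')=A/\overline{\tau}A$ is again Gorenstein (itself a consequence of the exact pair, since $A/\overline{\tau}A$ is isomorphic, up to shift, to its own Matlis dual $\operatorname{Hom}_A(A/\overline{\tau}A,A)=(0:_A\overline{\tau})=\overline{\tau'}A$, so has one-dimensional socle), and compare the Hilbert functions of $A$, $T$ and $T'$ — equivalently analyze the two-periodic minimal free resolution of $T'$ over $A$ — to conclude that $\tau\tau'$ cannot lie in $\mathfrak{m}_1I_{n+q-1}$. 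Making this comparison airtight is the delicate point; once it is in place, the reductions above immediately yield $H=H'=0$.
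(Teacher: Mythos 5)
Your argument for $H=0$ is correct: computing $\ker\phi_1$ and $\operatorname{im}\phi_2$ both as $(\mathfrak{m}I+(\tau\tau'))/\mathfrak{m}I$ via the identities $\tau(I:\tau)=\tau I+(\tau\tau')$ and $I\cap\mathfrak{m}(I:\tau)=\mathfrak{m}I+(\tau\tau')$ is a more explicit packaging of the paper's lifting argument (which takes $s\in I$ with $[s]\in\ker\phi_1$, writes $s=\sum a_ig_i+b\sigma$ and uses $(I:\sigma)=I+(\tau)$ to get $s\equiv r\tau\sigma$ mod $\mathfrak{m}I$; here $\sigma$ plays the role of your $\tau'$). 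Your reduction of $H'=0$ to the single claim $\tau\tau'\notin\mathfrak{m}I$ is also correct, and in fact cleaner than what the paper writes: the paper cites precisely this fact from \cite[Proposition~1.9]{KSV} but then asserts that it forces $(I:\tau)/\mathfrak{m}(I:\tau)=\langle\sigma\rangle$ and hence $\operatorname{im}\phi_1=0$; that intermediate step is not valid in general (e.g.\ $I=(x^3,y^3)$, $\tau=y^2$, $\sigma=y$: here $(I:\tau)=(x^3,y)$ has two minimal generators, $\operatorname{im}\phi_1=\langle[x^3]\rangle\neq 0$, and yet $\ker\phi_2=\langle[x^3]\rangle$ so $H'=0$ anyway). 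Your colon-ideal argument $(\mathfrak{m}I:\tau)\subseteq I+\mathfrak{m}(I:\tau)$, using $(\mathfrak{m}I:\tau\tau')=\mathfrak{m}$, deduces $H'=0$ directly from $\tau\tau'\notin\mathfrak{m}I$ without that detour.

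The genuine gap is the claim $\tau\tau'\notin\mathfrak{m}I$ itself --- that is, that a lift of the product of an exact pair of zero divisors is a minimal generator of $I$. You correctly identify this as the heart of the matter, but what you offer in its place is only a plan: the dichotomy $(\mathfrak{m}I:\tau)/I\in\{\mathfrak{m}_A K,K\}$ is fine, but your proposal to rule out the second alternative by combining the Gorenstein property of $A/\overline{\tau}A$ with a Hilbert-function or two-periodic-resolution comparison is never carried out, and you yourself flag it as ``the delicate point.'' The paper disposes of this by a single citation to \cite[Proposition~1.9]{KSV}, which proves exactly the minimal-generator statement. To complete your proof you should either cite that result, as the paper does, or actually execute the comparison argument you sketch; as written, the crux is left unproven.
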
 
\begin{proof}
	Assume that there exists a homogeneous element (of positive degree) $\sigma\in R$ for which $(I\colon\tau)=I+(\sigma)$ and $(I\colon\sigma)=I+(\tau)$, so that $\overline{\tau}$ and $\overline{\sigma}$ are an exact pair of zero divisors in $A$.  
	
	Suppose that $s\in I$ is any minimal generator of $I$ for which $s+\mathfrak{m}I\in \ker(\phi_1)$.  Then $s\in\mathfrak{m}(I\colon\tau)=\mathfrak{m}(I+(\sigma))$, and hence there exists elements $g_1,\ldots,g_t\in I$ and $a_1,\ldots,a_t,b\in \mathfrak{m}$ such that 
	$$s=\sum_{i=1}^ta_ig_i+b\cdot\sigma.$$
	It follows that $b\in(I\colon\sigma)=I+(\tau)$ and hence $b=q+r\cdot \tau$ for some $q\in I$ and $r\in R$.  Therefore we see that 
	$$s-r\cdot\tau\cdot\sigma\in\mathfrak{m}I.$$
	Then the equivalence class of $s$ in $H$ is 
	$$\left[s\right]\equiv \left[r\tau\sigma\right]\equiv 0$$
	which shows that $H=0$, as claimed.  
	
	Moreover, by \cite[Proposition 1.9]{KSV} we have $\tau\cdot\sigma\in I\setminus \mathfrak{m}I$, which yields that $(I:\tau)/\fm(I:\tau)=\langle \sigma \rangle$. Consequently, the second equation in \eqref{eq:It} yields $W=\langle \sigma\rangle$, $\operatorname{im}(\phi_1)=0$, and $H'=0$, also as claimed. 
\end{proof}

The following is a characterization of complete intersection cohomological blow-up algebras in terms of exact pairs of zero divisors.

\begin{theorem}
\label{thm:CI}
Fix oriented AG algebras $A=R/I$ and $T=R/(I:\tau)$ of socle degrees $d>k$ and let $\pi\colon A\rightarrow T$ be the natural surjective algebra map between them with Thom class $\overline{\tau}\in A_n$ and kernel $K\subset A$.  Let $f_R(\xi)=\xi^n+r_1\xi^{n-1}+\cdots+\lambda\cdot \tau\in R[\xi]=\hat{R}$ for some homogeneous elements $r_i\in R$, and let $\tilde{A}=\hat{R}/\tilde{I}$ be the associated cohomological blowup of $A$ along $\pi$ with parameters $(\overline{r}_1,\ldots,\overline{r}_{n-1},\lambda)$.  Then the following are equivalent.
\begin{enumerate}
\item $\tilde{A}$ is a complete intersection. 
\item $A$ is a complete intersection and $\overline{\tau}\in A_n$ is part of an exact pair of zero divisor on $A$.
\item $T$ is a complete intersection and $\overline{\tau}\in A_n$ is part of an exact pair of zero divisor on $A$.
\end{enumerate}
\end{theorem}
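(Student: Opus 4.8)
We reduce the whole equivalence to counting minimal generators of the cohomological blow up ideal $\tilde I$, using Corollary~\ref{cor:mingenform} together with the observation (recorded just before it) that $\tilde A=\hat R/\tilde I$ is a complete intersection if and only if $\mu(\tilde I)=r+1=\dim\hat R$, and the two structural facts Lemma~\ref{lem:exactH} and Lemma~\ref{lem:one}. First I would record the following generator-count reformulation. Since $I$ and $(I\colon\tau)$ are $\mathfrak m$-primary of height $r$ in $R=\F[x_1,\dots,x_r]$, one always has $\mu(I)\ge r$ and $\mu(I\colon\tau)\ge r$, with $\mu(I)=r$ (resp.\ $\mu(I\colon\tau)=r$) precisely when $A$ (resp.\ $T$) is a complete intersection, because in the regular ring $R$ an $\mathfrak m$-primary ideal generated by $r$ elements is generated by a system of parameters, hence by a regular sequence, and conversely a homogeneous regular sequence of maximal length is a minimal generating set. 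As $\dim H,\dim H'\ge0$, Corollary~\ref{cor:mingenform} then gives
\[
\tilde A\text{ is a CI}\iff\mu(\tilde I)=r+1\iff\big(A\text{ is a CI and }H'=0\big)\iff\big(T\text{ is a CI and }H=0\big).
\]

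Granting this, $(2)\Rightarrow(1)$ and $(3)\Rightarrow(1)$ are immediate: if $\overline\tau$ is part of an exact pair of zero divisors on $A$ then $H=H'=0$ by Lemma~\ref{lem:exactH}, and combined with $A$ (resp.\ $T$) being a CI the displayed equivalence forces $\tilde A$ to be a CI.

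The substance is in $(1)\Rightarrow(2)$ and $(1)\Rightarrow(3)$. Assuming $\tilde A$ is a CI, the displayed equivalence already gives that $A$ and $T$ are complete intersections and that $H=H'=0$, so it only remains to exhibit an exact pair of zero divisors on $A$ containing $\overline\tau$; by Lemma~\ref{lem:one} it is enough to prove the homogeneous ideal $\Ann_A(\overline\tau)=(I\colon\tau)/I$ of $A$ is principal, i.e.\ $\mu_A\big((I\colon\tau)/I\big)=1$. (It is nonzero since $I\subsetneq(I\colon\tau)$, so $\mu_A\ge1$; the point is to rule out $\mu_A\ge2$.) I would compute this number in two ways. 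On one hand, $\mu_A\big((I\colon\tau)/I\big)=\dim_\F\big((I\colon\tau)/(I+\mathfrak m(I\colon\tau))\big)=\dim_\F\operatorname{coker}(\phi_1)$, and because $\phi_2\circ\phi_1=0$ the complex \eqref{eq:cpx1} induces a left exact sequence $0\to H'\to\operatorname{coker}(\phi_1)\to I/\mathfrak m I$ with image $\operatorname{im}(\phi_2)$; with $H'=0$ this reads $\mu_A\big((I\colon\tau)/I\big)=\dim_\F\operatorname{im}(\phi_2)$. On the other hand, the natural surjection $I/\mathfrak m I\oplus\F\twoheadrightarrow(I+(\tau))/\mathfrak m(I+(\tau))$ that sends the free summand to the class of $\tau$ has kernel exactly $\operatorname{im}(\phi_2)\subseteq I/\mathfrak m I$: if $g+c\tau\in\mathfrak m I+\tau\mathfrak m$ with $g\in I$ and $c\in\F$, then $(f-c)\tau\in I$ for some $f\in\mathfrak m$, whence $f-c\in(I\colon\tau)\subseteq\mathfrak m$, forcing $c=0$ and $g\in\mathfrak m I+\tau\,(I\colon\tau)$. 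Hence $\mu(I+(\tau))=\mu(I)+1-\dim_\F\operatorname{im}(\phi_2)$. Since $A$ is a CI, $\mu(I)=r$, and since $I+(\tau)$ is $\mathfrak m$-primary, $\mu(I+(\tau))\ge r$; therefore $\dim_\F\operatorname{im}(\phi_2)\le1$. Combining the two computations forces $\mu_A\big((I\colon\tau)/I\big)=\dim_\F\operatorname{im}(\phi_2)=1$, so $\Ann_A(\overline\tau)$ is principal and Lemma~\ref{lem:one} produces the exact pair; together with $A$ (resp.\ $T$) being a CI this is exactly $(2)$ (resp.\ $(3)$).

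The main obstacle is this last computation — the identity $\mu(I+(\tau))=\mu(I)+1-\dim_\F\operatorname{im}(\phi_2)$ and the resulting bound $\dim_\F\operatorname{im}(\phi_2)\le1$ when $A$ is a complete intersection. Conceptually it expresses that $A/(\overline\tau)=R/(I+(\tau))$, being $R$ modulo at most $r+1$ forms, is forced to be a complete intersection exactly when $\Ann_A(\overline\tau)$ is cyclic; all the remaining steps are bookkeeping with Corollary~\ref{cor:mingenform}, Lemma~\ref{lem:exactH} and Lemma~\ref{lem:one}.
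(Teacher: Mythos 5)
Your proof is correct, and for the crucial direction $(1)\Rightarrow(2),(3)$ it takes a genuinely different route from the paper's. Both arguments agree at the outset: the numerical formula $\mu(\tilde I)=\mu(I)+\dim H'+1=\mu(I:\tau)+\dim H+1$ from Corollary~\ref{cor:mingenform}, together with $\mu(I),\mu(I:\tau)\geq r$ from Krull's height theorem, immediately yields the equivalence $\tilde A$ CI $\iff$ ($A$ CI and $H'=0$) $\iff$ ($T$ CI and $H=0$), which settles $(2)\Rightarrow(1)$ and $(3)\Rightarrow(1)$ via Lemma~\ref{lem:exactH} exactly as the paper does. Where you diverge is in producing the exact zero divisor partner for $\overline\tau$. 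The paper reads off $\dim_\F W=1$ from the decomposition in Theorem~\ref{thm:mingens}, justified by the remark that no two minimal generators of the CI ideal $I$ (or $\tilde I$) can share a common nonconstant factor — a true but not entirely trivial fact about regular sequences that the paper does not prove. You instead establish $\mu_A\bigl((I:\tau)/I\bigr)=1$ by an independent count: the exact sequence
\[
0\to\operatorname{im}(\phi_2)\to I/\mathfrak m I\oplus\F\to (I+(\tau))/\mathfrak m(I+(\tau))\to 0
\]
gives $\mu(I+(\tau))=\mu(I)+1-\dim_\F\operatorname{im}(\phi_2)$, and since $I+(\tau)$ is $\mathfrak m$-primary Krull's height theorem forces $\mu(I+(\tau))\geq r=\mu(I)$, hence $\dim_\F\operatorname{im}(\phi_2)\leq 1$; combined with $\dim_\F\operatorname{coker}(\phi_1)=\dim_\F H'+\dim_\F\operatorname{im}(\phi_2)=\dim_\F\operatorname{im}(\phi_2)\geq 1$ this gives $\mu_A\bigl((I:\tau)/I\bigr)=1$, and Lemma~\ref{lem:one} closes the argument. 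Your route is more self-contained — it replaces the unproved ``no common divisor'' assertion by a height-theorem bookkeeping argument that uses only facts already deployed elsewhere in the section — at the cost of one extra exact-sequence computation. Both routes read off the same quantity ($\dim_\F W=\mu_A\bigl((I:\tau)/I\bigr)=1$), just by different means.
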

\begin{proof}
	Assume that (1) holds.  Then $\tilde{I}$ must be generated by a $\hat{R}$-sequence, and hence it follows from Corollary \ref{cor:mingenform} that both $I$ and $(I\colon\tau)$ must also be generated by $R$-sequences and hence that $A$ and $T$ must be complete intersections, and that the homology groups $H$ and $H'$ must vanish.  Thus according to Equations \eqref{eq:It} and \eqref{eq:hatmg} we have 
	\begin{align*}
	\frac{I}{\mathfrak{m}I}= U\oplus \phi_2( W), &\ \ \ \ \ \  \frac{(I\colon\tau)}{\mathfrak{m}(I\colon\tau)}= \phi_1(U)\oplus W, & \text{and} \ \  
	\frac{\tilde{I}}{\hat{\mathfrak{m}}\tilde{I}}= & U\oplus \xi\cdot W\oplus\langle f_R(\xi)\rangle.
	\end{align*}
	It follows that $\dim_\F(W)=1$, since no two elements of a minimal generating set of $I$ (or $\tilde{I}$) can have a common divisor.  Let $\sigma\in (I\colon\tau)$ be a minimal generator for which $\sigma+\mathfrak{m}(I\colon\tau)\in W$.  Then we can write 
	$$I=(u_1,\ldots,u_{r-1},\tau\cdot\sigma) \ \text{and} \ (I\colon\tau)=(u_1,\ldots,u_{r-1},\sigma)=I+(\sigma)$$
	which implies by Lemma \ref{lem:one} that $a=\overline{\tau}$ and $b=\overline{\sigma}$ form an exact pair of zero divisors for $A$.  This shows that (1.) implies (2.) and (3.).
	
	Next assume that (2.) holds: $A$ is a complete intersection and $\overline{\tau}\in A_n$ is an exact zero divisor for $A$.  It follows from Lemma \ref{lem:exactH} that $H=H'=0$, and hence by Corollary \ref{cor:mingenform} it follows that $\tilde{I}$ is generated by a $\hat{R}$-sequence and hence that the cohomological blow up $\tilde{A}$ is a complete intersection, which is (1.).
	
	Finally, assume that (3.) holds:  $T$ is a complete intersection and $\overline{\tau}\in A_n$ is an exact zero divisor for $A$.  Again, it follows from Lemma \ref{lem:exactH} that $H=0$, and hence it follows from Corollary \ref{cor:mingenform} that the cohomological blow up $\tilde{A}$ is a complete intersection, and thus (1.) holds.
\end{proof}

\begin{remark}
It follows from the description of the minimal generators of $\tilde{I}$ in equation \eqref{eq:hatmg}  together with Theorem \ref{thm:CI} and Lemma \ref{lem:exactH} that when $\tilde{A}$ is a complete intersection and thus $\overline{\sigma}, \overline{\tau}$ is an exact pair of zero divisors on $A$,  a minimal generating set for $\tilde{I}$ can be described as
\[
\frac{\tilde{I}}{\hat{\mathfrak{m}}\tilde{I}}\cong U \oplus \langle \xi \sigma \rangle \oplus \langle f_R(\xi)\rangle.
\]
In particular, since $\tilde{I}$ is generated by a regular sequence and $ \xi \sigma$ is a minimal generator for $\tilde{I}$ it follows from  \cite[Proposition 1.9]{KSV} that  $\xi$ is an exact zero divisor of $\tilde{A}$.  The following example shows that this condition is not quite sufficient to identify a complete intersection as a cohomological blow-up algebra.
\end{remark}

\begin{example}
	\label{ex:Tony}
	Consider the complete intersection  
	$$\tilde{A}=\frac{\F[x,y]}{(x^4+y^4,x^2y^2)}.$$
	Then $\xi=\overline{x}\in\tilde{A}_1$ is an exact zero divisor for $\tilde{A}$, and its Hilbert function is $H(\tilde{A})=(1, 2, 3, 4, 3, 2, 1)$.  This particular CI algebra (or any isomorphic to it) cannot be a cohomological blow-up algebra by a result of \cite{IMMSW} concerning codimension two.
\end{example}
\subsection{Application:  Watanabe's Bold Conjecture}\label{boldsec}
The following ``rather bold'' conjecture was put forth by the fifth author \cite{WatanabeTalk} after noticing that many complete intersections arising as invariant rings could be realized as subrings of complete intersections cut out by quadrics and of the same socle degree.
\begin{conjecture}[Watanabe's Bold Conjecture]\label{boldconj}
	For any standard graded Artinian complete intersection $A$ of socle degree $d$, there is another standard graded Artinian complete intersection $B$ of the same socle degree $d$ cut out by quadrics and an injective algebra map
	from $A$ into $B$, in symbols:
	$$\phi\colon A=\frac{\F[x_1,\ldots,x_r]}{(f_1,\ldots,f_r)}\hookrightarrow \frac{\F[X_1,\ldots,X_N]}{(F_1,\ldots,F_N)}=B, \ \ \deg(F_i)=2, \ \forall \ 1\leq i\leq N.$$
\end{conjecture}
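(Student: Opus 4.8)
This is stated as a conjecture and, to the best of my knowledge, it is open in full generality; accordingly, the plan below is a strategy with one genuine gap, which I identify at the end. The approach proceeds in two stages. \emph{Stage one} proves the conjecture for the subclass of complete intersections admitting a presentation $A=\F[x_1,\dots,x_r]/(f_1,\dots,f_r)$ in which every $f_i$ is a product of linear and quadratic forms; this is precisely what the cohomological blow-up machinery yields (it is Theorem \ref{thm:WBC}). \emph{Stage two} reduces an arbitrary standard graded Artinian complete intersection to this subclass by a flat deformation. Stage one is proved outright by the results of this paper; stage two is the real obstacle and is why the statement remains conjectural.

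For stage one one ``peels off'' factors one at a time. Deleting any linear defining form by eliminating a variable, we may assume every $f_i$ has degree at least two; if some $f_i$ --- say $f_r$ --- has degree at least three, write $f_r=\tau\cdot\sigma$ where $\tau$ is either a quadratic factor or a product of two linear factors, so $n:=\deg\tau=2$. A short computation, using that $\F[x_1,\dots,x_r]/(f_1,\dots,f_{r-1})$ is a one-dimensional Cohen--Macaulay ring on which $\overline{f_r}$ is a nonzerodivisor (compare \cite[Proposition 1.9]{KSV}), gives $(I\colon\tau)=(f_1,\dots,f_{r-1},\sigma)$ and $(I\colon\sigma)=(f_1,\dots,f_{r-1},\tau)$, so $\overline\tau,\overline\sigma$ form an exact pair of zero divisors on $A$. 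Assuming the socle degree $d$ of $A$ is at least $4$ (the finitely many smaller cases being checked by hand), apply Construction \ref{con:ideal} with this $\tau$ and with a blow-up polynomial $f_R(\xi)=\xi^2+r_1\xi+r_2$ chosen to be a product of linear forms: take $f_R(\xi)=\xi^2-q$ when $\tau=q$ is quadratic and $f_R(\xi)=(\xi-\ell_1)(\xi-\ell_2)$ when $\tau=\ell_1\ell_2$. By Theorem \ref{thm:CI} the resulting cohomological blow-up $\tilde A=\F[x_1,\dots,x_r,\xi]/\tilde I$ is again a standard graded Artinian complete intersection (Remark \ref{rem:stdg}) of the same socle degree, it contains $A$ as a subalgebra via $\beta$, and by \eqref{eq:hatmg} one has $\tilde I=(f_1,\dots,f_{r-1},\xi\sigma,f_R(\xi))$. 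Thus the generator $f_r$, of degree $2+\deg\sigma$, has been traded for the genuine quadric $f_R(\xi)$ and the shorter product $\xi\sigma$, of degree $1+\deg\sigma$, with all other generators unchanged. Iterating, the multiset of degrees of the non-quadratic product generators strictly decreases, so after finitely many blow-ups we reach a standard graded Artinian complete intersection $B\supseteq A$ of the same socle degree and cut out entirely by quadrics.

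For stage two one wants to reduce a general standard graded Artinian complete intersection $A=\F[x_1,\dots,x_r]/(f_1,\dots,f_r)$, with $\deg f_i=d_i$, to the product case. The natural device is a flat degeneration of $A$ to the monomial complete intersection $A_0=\F[x_1,\dots,x_r]/(x_1^{d_1},\dots,x_r^{d_r})$, each of whose defining forms $x_i^{d_i}=x_i\cdot x_i^{d_i-1}$ is a product of linear forms (such a family is easily produced, e.g.\ along a curve inside the irreducible parameter space of regular sequences of degrees $(d_1,\dots,d_r)$). Stage one then supplies a quadric complete intersection $B_0\supseteq A_0$ of socle degree $d$, and the remaining --- and, to my knowledge, unresolved --- task is to propagate the embedding $A_0\hookrightarrow B_0$ along the family: given a connected flat family $\mathcal A\to\mathbb{A}^1$ with special fibre $A_0$ and some fibre $\cong A$, one seeks a flat family $\mathcal B\to\mathbb{A}^1$ of quadric complete intersections with special fibre $B_0$ and a morphism $\mathcal A\to\mathcal B$ of $\mathbb{A}^1$-algebras restricting to $A_0\hookrightarrow B_0$ over the origin. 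Concretely this asks, for each parameter value, for linear forms $\ell_j\in\mathcal B_t$ and quadrics $Q_i$ defining $\mathcal B_t$, depending algebraically on $t$, with $f_j^{(t)}(\ell_1,\dots,\ell_r)\equiv 0$ in $\mathcal B_t$ for all $j$; solving it amounts to lifting a solution of a polynomial system along $\mathbb{A}^1$, which would follow from the vanishing of an obstruction in an appropriate cotangent cohomology group --- equivalently, from the statement that the locus of complete intersections admitting such an embedding is open, or at least meets every component in a dense subset. Establishing that vanishing is the main obstacle and is exactly what keeps the statement a conjecture. An alternative, deformation-free attempt would fix a sufficiently generic quadric complete intersection $B$ on $N=d$ variables (so $B$ has socle degree $d$) and try to solve $f_i(\ell_1,\dots,\ell_r)\equiv 0$ directly in $B$ for generic linear forms $\ell_j\in B_1$; this is a Bertini-type count whose delicate point is that the system may be overdetermined in the degrees $d_i$ close to $d/2$, and I would expect it to be no easier than the deformation route.
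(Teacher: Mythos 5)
You correctly observe that this is stated as a conjecture which the paper does not prove; the paper establishes only the special case in which every defining form factors into linear and/or quadratic forms (Theorem \ref{thm:WBC}), and your stage one essentially reproduces that proof. Both you and the paper peel a degree-two factor $\tau$ off a generator $f_r$ of degree at least three, take the cohomological blow-up along the surjection $A\to T=R/(I\colon\tau)$ with $f_R(\xi)$ a monic quadric whose constant term is a scalar multiple of $\tau$, observe that $\tilde I=(f_1,\ldots,f_{r-1},\xi\sigma,f_R(\xi))$, compose the blow-down inclusion with the inductive embedding, and iterate. The differences are cosmetic: you track the multiset of degrees of non-quadratic generators where the paper uses the single invariant it calls the defect; you sometimes take $f_R(\xi)=(\xi-\ell_1)(\xi-\ell_2)$ where the paper always takes $\xi^2-\tau$, but either choice is already a quadric so nothing is gained by factoring; and you invoke Theorem \ref{thm:CI} and the exact-zero-divisor criterion explicitly where the paper leaves the complete-intersection verification implicit. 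Your explicit treatment of the small-socle-degree cases $d\le 3$, where the constraint $2\le n<d-1$ of Construction \ref{con:ideal} forbids $n=2$, is a refinement the paper's proof of Theorem \ref{thm:WBC} does not make. Stage two --- degenerating a general complete intersection to a monomial one and propagating the quadric embedding along a flat family --- is not attempted anywhere in the paper, and you are right that the obstruction-theoretic question you isolate there is precisely what keeps the full statement a conjecture rather than a theorem; flagging it as the genuine gap is the correct conclusion.
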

In 2016, the third author proved \cite{McDBC} Watanabe's Bold Conjecture in the special case where $A$ is cut out by polynomials which factor into a product of linear forms\footnote{See also \cite[Theorem 1]{McDSmith} for another case in which Watanabe's Bold Conjecture holds.}.  Here we give another, much shorter, proof of this result (in fact something slightly stronger) using cohomological blow ups.  First some notation.  Let $A=R/I$ is a complete intersection where $R=\F[x_1,\ldots,x_r]$ has the standard grading and $I$ is minimally generated by some regular sequence $(f_1,\ldots,f_r)$.  If the minimal generators $f_1,\ldots,f_n$ can be chosen such that each $f_i$ is a product of linear and/or quadratic forms, i.e. $f_i=L_1\cdots L_k$ where $\deg(L_j)=1 \ \text{or} \ 2$, then we shall say $A$ is of class $\mathcal{W}$.  If $d_i=\deg(f_i)>1$ for all $i$, then the degree sequence is $(d_1,\ldots,d_n)$ and we define its \emph{defect} to be 
\[ \operatorname{def}(A)=d_1+\cdots+d_n-2n=\operatorname{soc.deg.}(A)-n.\]
Let $\mathcal{W}(m)$ denote the subclass consisting of complete intersections of defect $m$, so that $\mathcal{W}=\bigsqcup_{m=0}^\infty \mathcal{W}(m)$.  Complete intersections of class $\mathcal{W}(0)$ are called \emph{quadratic complete intersections}.  

\begin{theorem}
	\label{thm:WBC}
	For every complete intersection $A$ of class $\mathcal{W}$ there is a complete intersection $B$ of class $\mathcal{W}(0)$ of the same socle degree as $A$ and an injective map of algebras $\phi\colon A\hookrightarrow B$.
\end{theorem}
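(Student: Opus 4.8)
The plan is to induct on the defect $m=\operatorname{def}(A)$, using a single cohomological blow up to strip off a degree-$2$ factor and drop the defect by one while staying inside the class $\mathcal{W}$. First I would perform the standard reduction to the case where every minimal generator has degree at least $2$: if some $f_i$ is linear, eliminate the corresponding variable; the images of the remaining generators still form a regular sequence and are still products of linear and/or quadratic forms, so one is reduced to a complete intersection of class $\mathcal{W}$ in fewer variables with the same quotient algebra. So assume $A=R/I$ with $I=(f_1,\dots,f_r)$, each $f_i$ a product of linear and/or quadratic forms of degree $d_i\ge 2$, and set $d=\operatorname{soc.deg.}(A)=\sum_i(d_i-1)$. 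The base case $m=0$ forces all $d_i=2$, so $A$ is already a quadratic complete intersection and we take $B=A$.

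For the inductive step assume $m\ge1$, so $\sum_i(d_i-2)=m\ge1$ and, after reordering, $d_1\ge3$. Since $f_1$ is a product of linear/quadratic forms of total degree $\ge3$, I can split off a sub-product $h$ of degree exactly $2$ (one quadratic factor, or two linear factors) and write $f_1=g\cdot h$ with $g$ a product of linear/quadratic forms of degree $d_1-2\ge1$. Put $\tau=h$. Because $f_1=gh$ lies in a regular sequence it is a nonzerodivisor modulo $(f_2,\dots,f_r)$, hence so is $h$, and therefore $(I:\tau)=(g,f_2,\dots,f_r)$; in particular $T:=R/(I:\tau)$ is again a complete intersection, $\pi\colon A\to T$ is the natural surjection, and $\overline{\tau}=\overline{h}\in A_n$ with $n:=d-\operatorname{soc.deg.}(T)=\deg h=2\ge2$, so the blow up construction applies. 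Passing the equality $(I:h)=(g,f_2,\dots,f_r)$ to $A$ gives $(0:_A\overline{h})=\overline{g}\,A$, so by Lemma \ref{lem:one} the elements $\overline{\tau}=\overline{h}$ and $\overline{g}$ form an exact pair of zero divisors on $A$.

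Now form the cohomological blow up $\tilde{A}$ of $A$ along $\pi$ with $f_R(\xi)=\xi^2-h$ (so $a_1=0$ and $a_2=-h=\lambda\tau$ with $\lambda=-1\ne0$). By Theorem \ref{thm:hatAGor}(1) the structure map $\beta\colon A\hookrightarrow\tilde{A}$ is injective, and by Theorem \ref{thm:hatAGor}(2) $\tilde{A}$ is AG, hence Artinian, of socle degree $d$. Its defining ideal is $\tilde{I}=I\cdot R[\xi]+\xi\cdot(I:\tau)\cdot R[\xi]+(f_R(\xi))$; expanding and deleting the redundant generators $f_1=gh=\xi(\xi g)-g\,f_R(\xi)$ and $\xi f_i\in(f_i)$ for $i\ge2$, one obtains
\[
\tilde{I}=\bigl(\xi g,\ f_2,\dots,f_r,\ \xi^2-h\bigr)\subset R[\xi].
\]
This is generated by $r+1$ elements and $R[\xi]$ has Krull dimension $r+1$ with $\tilde{A}$ Artinian, so $\tilde{I}$ is a regular sequence and $\tilde{A}$ is a complete intersection (this is of course also an instance of Theorem \ref{thm:CI}, the exact pair of zero divisors $\overline{h},\overline{g}$ being the point). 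Its degree sequence is $(d_1-1,d_2,\dots,d_r,2)$, with sum $\sum_i d_i+1$, so $\operatorname{def}(\tilde A)=\sum_id_i+1-2(r+1)=m-1$; moreover $\xi g$ is $\xi$ times a product of linear/quadratic forms, $\xi^2-h$ is a single quadratic form, and all degrees are $\ge2$, so $\tilde{A}\in\mathcal{W}$. Since blowing up along a surjective map of standard graded algebras preserves standard grading (Remark \ref{rem:stdg}), $\tilde{A}$ is standard graded. Applying the induction hypothesis to $\tilde{A}$ yields a quadratic complete intersection $B$ of class $\mathcal{W}(0)$ of socle degree $d$ with an injective algebra map $\tilde{A}\hookrightarrow B$; composing with $\beta$ gives $A\hookrightarrow\tilde{A}\hookrightarrow B$.

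The crux of the argument, and the step I expect to require the most care, is the inductive step's bookkeeping: one must choose the factorization $f_1=g\cdot h$ so that $h$ has degree exactly $2$ (guaranteeing both $n=2\ge2$ and that $\xi^2-h$ is a genuine quadratic generator), recognize $\overline{h},\overline{g}$ as an exact pair of zero divisors so that the blow up is a complete intersection, and read off the new degree sequence $(d_1-1,d_2,\dots,d_r,2)$ from the explicit minimal generating set of $\tilde{I}$ (alternatively via Theorem \ref{thm:mingens} and the remark following Theorem \ref{thm:CI}) in order to certify that $\tilde{A}$ again belongs to $\mathcal{W}$ with strictly smaller defect. Everything else is routine, and iterating $m=\operatorname{def}(A)$ times produces the desired quadratic complete intersection, which also gives the statement in the slightly stronger form that one need only assume $A$ is cut out by products of linear \emph{and/or quadratic} forms.
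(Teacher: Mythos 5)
Your proof is correct and takes essentially the same approach as the paper: induct on the defect, split one generator $f_i$ of degree $\geq 3$ into a degree-$2$ factor $\tau$ times a cofactor $g$, blow up along the surjection $A\to R/(I:\tau)$ with $f_R(\xi)=\xi^2-\tau$ to get a complete intersection with defect one smaller, and compose the resulting injections. You spell out a few details the paper leaves implicit (the exact-zero-divisor structure of $\overline{\tau},\overline{g}$, the initial reduction to no linear generators), but the core argument and even the displayed ideal $\tilde{I}$ and degree-sequence bookkeeping are identical.
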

\begin{proof}	
	We use induction on $m$ to show that every complete intersection $A\in\mathcal{W}(m)$ embeds into a complete intersection $B\in\mathcal{W}(0)$ of the same socle degree.  For the base case $m=0$ there is nothing to show.  For the inductive step, assume that $m>0$ and that the statement holds for every $0\leq m'<m$.  Fix $A\in \mathcal{W}(m)$ with presentation as in \eqref{eq:A} with minimal generators $f_1,\ldots,f_n$ that factor into products of linear and/or quadratic forms.  Since $\operatorname{def}(A)=m>0$, we may assume without loss of generality that $\deg(f_n)\geq 3$.  Write $f_n=\tau\cdot g$ where $\deg(\tau)=2$ which is possible since we are assuming that $f_n$ factors as a product of linear and quadratic forms.  Then let $\xi$ be an indeterminate, and define the monic quadratic polynomial $f_R(\xi)=\xi^2-\tau\in R[\xi]$.  Then the cohomological blow up ideal of $I$ with respect to $\tau$ and $f_R(\xi)$ is by \eqref{eq:hatII}
	$$\tilde{I}=(f_1,\ldots,f_{n-1},\xi g,f_R(\xi))\subset R[\xi]$$
	and the quotient $\tilde{A}=R[\xi]/\tilde{I}$ is the cohomological blow-up of $A$ along the map 
	$$\pi\colon A=\frac{\F[x_1,\ldots,x_n]}{(f_1,\ldots,f_{n-1},\tau g)}\rightarrow \frac{\F[x_1,\ldots,x_n]}{(f_1,\ldots,f_{n-1},g)}=T.$$
	Note that the degree sequence of $\tilde{A}$ is $(d_1,\ldots,d_{n-1},d_n-1,2)$ and its defect is $d_1+\cdots+d_{n}-1+2-2(n+1)=(d_1+\cdots+d_n)-2n-1=m-1$.  Finally it is clear that the minimal generators of $\tilde{I}$ are either quadratic (like $f_R(\xi)$) or factor into a product of linear and/or quadratic forms (since the minimal generators of $I$ do).  Therefore $\tilde{A}\in\mathcal{W}(m-1)$.  For the injection, note that the blow down map will do the trick:
	$$\beta\colon A=\frac{\F[x_1,\ldots,x_n]}{(f_1,\ldots,f_{n-1},\tau g)}\hookrightarrow\frac{\F[x_1,\ldots,x_n][\xi]}{(f_1,\ldots,f_{n-1},\xi g, f(\xi))}=\tilde{A}.$$ 
	To complete the proof note that by induction, there is an embedding of $\tilde{A}\in\mathcal{W}(m-1)$ into some complete intersection $B\in\mathcal{W}(0)$ of the same socle degree as $\tilde{A}$ (and hence also $A$), say
	$$\iota\colon \tilde{A}\hookrightarrow B.$$
	Composing $\beta$ and $\iota$ then gives an embedding $\iota\circ\beta\colon A\hookrightarrow B$, as desired.    
\end{proof}

\section{Restrictions on Hilbert Functions}\label{compsec}
In this section we show that standard graded cohomological blow-up algebras cannot have arbitrary Hilbert functions. In fact, in the parameter space of AG algebras of fixed embedding dimension $\geq 3$ and socle degree $4$ or $\geq 6$, cohomological blow-up algebras are quite rare. To justify this assertion we recall the notion of compressed AG algebra \cite{Iarrobino}.  We use the notation of Section \ref{sec:MD}: $R=\F[x_1,\ldots,x_r]$ is a standard graded polynomial ring, $Q=\F[X_1,\ldots,X_r]$ its dual divided power algebra.  For a $d$-form $H$, its associated AG algebra $C=R/\Ann(H)$ has embedding dimension $r$ if $\Ann(H)$ contains no linear forms.

\begin{definition}
	A standard graded AG algebra $C$ of embedding dimension $r$ and socle degree $d$ is {\em compressed} provided
	\[
	\dim_k C_i=
	\begin{cases}
	\dim_\F R_{i}=\binom{r-1+i}{r-1} & \text{if } i\leq \left \lfloor \frac{d}{2}\right \rfloor \\
	\dim_\F C_{d-i} & \text{if } i > \left \lfloor \frac{d}{2}\right \rfloor
	\end{cases}.
	\]
\end{definition}

We can parametrize graded Artinian Gorenstein algebras of fixed socle degree $d$ and codimension $r$ by their Macaulay dual generators in $Q_d$, essentially by elements of the projective space $\P(Q_d)$. The compressed algebras form a dense Zariski open set in this parameter space \cite[Proposition~3.12]{IK}.\footnote{This depends on $R_j/(I^2)_j$ being the tangent space to $\
	Gor(T)$ at $A=R/I$ and gives perhaps the shortest proof. Original references are \cite[Theorem 3.31]{EI} and \cite[Theorem 5.1]{Green}.} We now show that for sufficiently large parameters the cohomological blow-up algebras do not belong to this set.

\begin{theorem}
	\label{thm:compressed}
	Compressed algebras of embedding dimension $r\geq 3$ whose socle degree $d$ satisfies $d=4$ or $d\geq 6$ are not cohomological blow up algebras. 
\end{theorem}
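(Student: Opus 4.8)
The plan is to obtain a contradiction from the Hilbert function identity
$$H(\tilde A)=H(A)+H(T)[1]+\cdots+H(T)[n-1]$$
of Corollary~\ref{cor:splitseq}, read off in the middle degree $e:=\lfloor d/2\rfloor$. I take ``cohomological blow up algebra'' to mean ``having a standard BUG structure'', i.e.\ $\tilde A$ is the cohomological blow up of a standard graded AG algebra $A$ of socle degree $d$ along a surjective map $\pi\colon A\to T$ with $n=d-k\ge 2$ (this matches the notion of an inaccessible Hilbert function, and is what makes the binomial estimates below available). First I would collect the cheap consequences: surjectivity of $\pi$ makes $T$ standard graded with $\dim T_1=:r_T\le r_A:=\dim A_1$; comparing the two sides of the identity in degree~$1$ gives $r=r_A+1$, so both $A$ and $T$ are generated by at most $r-1$ elements. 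Since $d=4$ or $d\ge 6$, we have $e\ge 2$.

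The heart of the argument is the identity in degree $e$. For any standard graded algebra generated by at most $m$ elements one has $\dim(\,\cdot\,)_i\le\binom{m-1+i}{m-1}$; applying this to $A$ and to each summand $H(T)_{e-j}$, and using that $\tilde A$ being compressed forces $H(\tilde A)_e=\binom{r-1+e}{r-1}$ (valid because $e\le\lfloor d/2\rfloor$), the identity in degree $e$ yields
$$\binom{r-1+e}{r-1}=H(\tilde A)_e\le\sum_{j=0}^{n-1}\binom{r-2+e-j}{r-2},$$
with binomials having negative top entry read as $0$. By the hockey stick identity the right-hand side equals $\binom{r-1+e}{r-1}$ when $e\le n-1$ and equals $\binom{r-1+e}{r-1}-\binom{r-1+e-n}{r-1}$ when $e\ge n$. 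In the case $e\ge n$ (that is, $k\ge\lceil d/2\rceil$) we have $e-n\ge 0$, so the subtracted binomial is at least $1$ and the inequality becomes $\binom{r-1+e}{r-1}\le\binom{r-1+e}{r-1}-1$, which is absurd.

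It remains to handle $e\le n-1$, i.e.\ $k\le\lceil d/2\rceil-1$. Here the displayed inequality is forced to be an equality, so every intermediate estimate is an equality; in particular $H(T)_i=\binom{r-2+i}{r-2}$ for $i=0,1,\dots,e-1$. These values are positive and $T$ is Artinian of socle degree $k$, so $k\ge e-1$; combined with $k\le\lceil d/2\rceil-1$ this leaves only $k=e-1$, or ($d$ odd and) $k=e$. If $k=e-1$ then, since $e\ge 2$ gives $k\ge 1$, we get $1=\dim\soc(T)=H(T)_{e-1}=\binom{r+e-3}{r-2}$, impossible because $r+e-3>r-2$ makes the binomial at least $r-1\ge 2$. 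If $k=e$ (so $d=2e+1$), the equality chain gives $H(T)_1=\binom{r-1}{r-2}=r-1$, while Gorenstein symmetry of $T$ gives $H(T)_{e-1}=H(T)_1=r-1$; but the equality chain also gives $H(T)_{e-1}=\binom{r+e-3}{r-2}$, so $\binom{r+e-3}{r-2}=r-1$, which forces $e=2$, hence $d=5$ --- excluded by hypothesis. In every case we reach a contradiction, so $\tilde A$ has no standard BUG structure.

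I expect the real obstacle to sit in the case $e\le n-1$: there the crude binomial bound is already sharp, so it produces no contradiction on its own, and one must push to the equality case and then feed in the extra structure of $T$ --- its socle degree and the symmetry of its Hilbert function. This is exactly the step where the hypotheses $r\ge 3$ and $d\ne 5$ (hence the stated range of $d$) get used, and it is worth double-checking the boundary values $d=4$ (where the subcase $k=e$ cannot occur) and $d=6$ (where $e=3$) by hand to confirm the inequalities go the right way. A secondary, more bookkeeping-type point is to state clearly that it suffices to treat standard graded $A$ and $T$, i.e.\ to argue at the level of standard BUG structures and inaccessible Hilbert functions.
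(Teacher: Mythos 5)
Your argument is correct, and while it shares the paper's backbone --- read the Hilbert function identity $H(\tilde A)=H(A)+\sum_{j=1}^{n-1}H(T)[j]$ against binomial bounds coming from embedding dimension $r-1$ and derive a contradiction with compressedness --- it travels a genuinely different route. The paper first pins down $n$ via Boij's structural theorem on minimal generators of compressed Gorenstein ideals: since $f_R(\xi)$ of degree $n$ is a minimal generator of $\tilde I$, and for a compressed algebra all minimal generators lie in degrees $\lfloor d/2\rfloor+1$ or $\lfloor d/2\rfloor+2$, one gets $n\in\{\lfloor d/2\rfloor+1,\lfloor d/2\rfloor+2\}$ and hence $k\in\{\lceil d/2\rceil-1,\lceil d/2\rceil-2\}$; it then reads the identity at degrees $i\le\lfloor k/2\rfloor+2$, invoking the Gorenstein symmetry of $T$ at the top degree to produce the strict inequality. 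You dispense with Boij entirely: you evaluate only at $e=\lfloor d/2\rfloor$, dispose of the region $n\le e$ (exactly the region Boij excludes) by a clean hockey-stick estimate, and in the complementary region $n\ge e+1$ force equality, deduce $k\ge e-1$ from positivity of $H(T)_{e-1}$, and finish using the socle degree of $T$ together with its symmetry. Your proof is more self-contained, and in fact yields a slightly sharper constraint on $k$ than the paper records (for even $d$ you rule out $k=e-2$ outright, whereas Boij leaves it as a possibility to be eliminated later). What the paper's appeal to Boij buys is a shorter case analysis, at the price of citing a nontrivial external result. Both proofs stall at $d=5$ for the same reason: the binomial estimate at degree $e$ is saturated there. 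Your framing in terms of standard BUG structures and inaccessible Hilbert functions is also the right reading of the statement, and is cleaner than the paper's own wording, which leaves the standard-gradedness of $A$ implicit.
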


\begin{proof}
	Let $C=R/\Ann(H)$ be a compressed algebra of socle degree $d$ and embedding dimension $r$ and assume that $C$ is the blow up along a surjective morphism $\pi: A\to T$ of some AG algebras of socle degrees $d$ and $k$ respectively. 
	Then since $C$ is compressed $\tilde{I}=\Ann(H)$ is a homogeneous ideal with initial degree 
	\[
	\min\left \{i\mid\tilde{ I}_i\neq 0 \right \}=\left \lfloor \frac{d}{2}\right \rfloor +1 .
	\]
	By \cite[Proposition 3.2]{Boij} the minimal generators of $\tilde{I}$ have degrees $\left \lfloor \frac{d}{2}\right \rfloor +1$ and possibly $\left \lfloor \frac{d}{2}\right \rfloor +2$. Recall that the polynomial $f_R(\xi)$ of degree $n=d-k$ is a minimal generator of $\tilde{I}$ by Theorem \ref{thm:mingens} and thus 
	\begin{equation}
	\label{eq:n}
	n=d-k\in \left\{ \left \lfloor \frac{d}{2}\right \rfloor +1, \left \lfloor \frac{d}{2}\right \rfloor +2\right\},
	\quad \text{ thus } \quad 
	k\in \left\{ \left \lceil \frac{d}{2}\right \rceil -1, \left \lceil \frac{d}{2}\right \rceil -2\right\}.
	\end{equation}
	
	By the definition of blow up, when $n>1$, the embedding dimension of $A$ is $r-1$ (one less than the embedding dimension of $C$) and by the surjectivity of $\pi$ embedding dimension of $T$ is at most $r-1$. This yields the following upper bounds on the Hilbert functions of $A$ and $T$:
	\begin{eqnarray}
	\label{eq:ineqA}
	\dim_\F A_i &\leq& \min \left\{\binom{r-2+i}{r-2},  \dim_\F A_{d-i} \right\} \\
	\label{eq:ineqT}
	\dim_\F T_i &\leq& \min \left\{\binom{r-2+i}{r-2},  \dim_\F T_{k-i} \right\}.
	\end{eqnarray}
	By Theorem \ref{thm:BUMD1} we have
	\begin{equation}
	\label{eq:H(C)}
	H(C)=H(A)+H(T)[1]+ \cdots+H(T)[n-1].
	\end{equation}
	Evaluating the above identity in degrees $i \leq \left \lfloor \frac{k}{2}\right \rfloor +1$ combined with the above inequalities yields
	\[
	\binom{r+i-1}{r-1}=H(A)[i]+\sum_{j=1}^{i} H(T)[i-j]\leq \sum_{j=0}^{i} \binom{r+j-2}{r-2}=\binom{r+i-1}{r-1},
	\]
	where the last equality is a well-known combinatorial identity. This implies that equality must hold both in \eqref{eq:ineqA} and in \eqref{eq:ineqT} for $i \leq \left \lfloor \frac{k}{2}\right \rfloor +1$. In degree $i = \left \lfloor \frac{k}{2}\right \rfloor +2$ since $r\geq 3$ the  inequality \eqref{eq:ineqT} yields 
	\[
	H(T)[i-1]= H(T)[k-i+1]=H(T)\left[ \left \lceil \frac{k}{2}\right \rceil -1\right]\leq  \binom{r-3+ \left \lceil \frac{k}{2}\right \rceil}{r-2}<\binom{r+i-2}{r-2}
	\]
	Evaluating equation \eqref{eq:H(C)} in degree $i = \left \lfloor \frac{k}{2}\right \rfloor +2$ and combining the result with the  inequality \eqref{eq:ineqA} and the inequality displayed above gives
	\[
	H(C)[i]=H(A)[i]+\sum_{j=1}^{i} H(T)[i-j]<  \sum_{j=0}^{i} \binom{r+j-2}{r-2}=\binom{r+i-1}{r-1}.
	\]
	This contradicts our assumption that $C$ is compressed, provided that  
	\[ \left \lfloor \frac{k}{2}\right \rfloor +2\leq \left \lfloor \frac{d}{2}\right \rfloor, \text{ for } k\in \left\{ \left \lceil \frac{d}{2}\right \rceil -1, \left \lceil \frac{d}{2}\right \rceil -2\right\}.\] 
	The above inequality is satisfied if and only if $d=4$ or $d\geq 6$.
\end{proof}

The following example shows that there exists compressed algebras of socle degree $5$ which are cohomological blow-ups.
\begin{example}
		\label{ex:HF}
		Let 
		$$\pi\colon A=\frac{\F[x,y]}{(x^4,y^3)}\rightarrow T=\frac{\F[x,y]}{(x^2-xy,y^2)}$$
		be the map defined by $\pi(x)=x$ and $\pi(y)=y$ and distinguished socle generators $\sigma_A=x^3y$ and $\sigma_T=x^2$.  Then $\tau=xy$ and $K=(x^2-xy,y^2)$, and hence taking parameters $t_1=t_2=0$ and $\lambda=1$ we get the cohomological blow-up 
		$$\tilde{A}=\frac{\F[x,y,\xi]}{(x^4,y^3,\xi(x^2-xy),\xi y^2,\xi^2-xy)}$$
		which has Hilbert function $H(\tilde{A})=(1,3,6,6,3,1)$, and hence $\tilde{A}$ is a compressed BUG of socle degree $5$.
\end{example}

\section{The Lefschetz Property}
\label{Lefsec}
 The strong Lefschetz property  for graded AG algebras is an algebraic version of a property of cohomology rings of smooth complex projective varieties stemming from the Hard Lefschetz theorem in algebraic geometry.  

\begin{definition}\label{LPdef}
A graded Artinian $\F$-algebra $A=\bigoplus_{i=0}^dA_i$ is said to satisfy the {\em strong Lefschetz property} (SLP) if there is a linear form $\ell\in A_1$ for which the multiplication maps $\times\ell^j\colon A_i\rightarrow A_{i+j}$ have full rank $\rank(\times \ell^j)=\min\{\dim_\F A_i, \dim_\F A_{i+j}\}$ for each degree $i$ and each exponent $j$. A linear form $\ell$ with this property is called a strong Lefschetz element for $A$.

If the multiplication maps $\times\ell\colon A_i\rightarrow A_{i+1}$ have full rank for each degree $i$ then $A$ is said to satisfy the {\em weak Lefschetz property} (WLP). 
\end{definition}

More generally, given any graded Artinian algebra $A$, and any linear form $\ell\in A_1$ we can define its \emph{Jordan type} $P_{\ell}$ to be the partition corresponding to the block decomposition of the Jordan canonical form for the nilpotent linear operator $\times \ell\colon A\rightarrow A$.  It is well known that for a standard-graded AG algebra with unimodal
Hilbert function then $\ell$ is strong Lefschetz if and only if the Jordan type $P_\ell$ is equal to $H^\vee$, the conjugate of the Hilbert function regarded as a partition (switch rows and columns in the Ferrers graph) \cite[Proposition 2.10]{IMM0} and $\ell$ is weak Lefschetz if $P_\ell$ has number of parts equal to the Sperner number of $H$ \cite[Proposition 3.5]{H-W}.

In this section we study the strong Lefschetz Property for cohomological blow-up algebras.
To attain this goal, we observe the behavior of these rings in families. Our strategy is to consider every cohomological blow-up algebra as the general fiber in a certain flat family. Interestingly, all fibers in these familes will be AG algebras with the exception of the special fiber. Our proof of the SLP for the cohomological blow up then employs the following well known fact:  in a flat family if SLP holds on the special fiber, then it must also hold on a sufficiently general fiber by semicontinuity of Jordan type, e.g. \cite[Corollary 2.44]{IMM0}.

We continue with the notation established in previous sections, namely $\pi:A\to T$ is a surjective map of oriented AG algebras with Thom class $\tau$ and kernel $K$, $f_A(\xi)\in A[\xi]$ is a homogeneous monic polynomial yielding the cohomological blow-up algebra
\[
\tilde{A}=\frac{A[\xi]}{(\xi \cdot K, f_A(\xi))}=\frac{A[\xi]}{I},
\]
as in Construction \ref{con:hat}.

Consider a weighted monomial order $<$ on $A[\xi]$ obtained by assigning weight $1$ to the variable $\xi$ and weight $0$ to each element of $A$. Then the weight  of a monomial $\mu$ is $w(\mu)=\max\{n:\,\  \xi^n\mid \mu\}$ and the weight of a polynomial $g=\sum c_i\mu_i \in A[\xi]$ with $c_i\in F^\times$ and $\mu_i$ monomials is $w(g)=\max\{w(\mu_i)\}$. Two monomials $\mu, \mu'$ satisfy $\mu<\mu'$ if and only if $w(\mu)<w(\mu')$. The initial form of a polynomial $g=\sum c_i\mu_i \in A[\xi]$ is ${\rm in}_<(g)=\sum_{w(\mu_j)=w(g)} c_j\mu_j$ and the initial ideal of an ideal $J\subseteq A[\xi]$ is 
\[
{\rm in}_<(J)=({\rm in}_<(g) \mid g\in J).
\] 

While it is not usually the case that the generators of the initial ideal of an ideal $J$ are the initial forms of the generators of $J$, this is nevertheless the case for $I$ since, as we show below,  the generators of $I$ form a Gr\"obner basis with respect to $<$. We obtain the following description for the initial ideal of $I$.

\begin{lemma}
\label{lem:initialideal}
The set $\{\xi \cdot K, f_A(\xi)\}$ is a Gr\"obner basis for $I$ with respect to  $<$, i.e. 
the initial ideal of $I$ is
$ {\rm in}_<(I)=(\xi\cdot K, \xi^n)$.
\end{lemma}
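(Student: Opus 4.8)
The plan is to prove the asserted equality ${\rm in}_<(I)=(\xi\cdot K,\xi^n)$ by combining an obvious containment with a Hilbert function computation. First I would record the initial forms of the generators. For $u\in K$ every term of $\xi\cdot u$ has $\xi$-weight $1$, so ${\rm in}_<(\xi\cdot u)=\xi\cdot u$; and since in $f_A(\xi)=\xi^n+a_1\xi^{n-1}+\cdots+a_n$ each term $a_i\xi^{n-i}$ with $i\ge 1$ has $\xi$-weight $n-i<n$, we get ${\rm in}_<(f_A(\xi))=\xi^n$. Because the weight filtration is compatible with the standard grading on $A[\xi]$ (the initial form of a homogeneous element is homogeneous of the same degree), this already gives the containment $(\xi\cdot K,\xi^n)\subseteq{\rm in}_<(I)$ of homogeneous ideals.

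For the reverse inclusion I would argue by Hilbert functions: since $A[\xi]/(\xi\cdot K,\xi^n)$ surjects onto $A[\xi]/{\rm in}_<(I)$, it suffices to show these two graded algebras have the same Hilbert function, i.e.\ $H\bigl(A[\xi]/(\xi\cdot K,\xi^n)\bigr)=H\bigl(A[\xi]/{\rm in}_<(I)\bigr)$. Here I would invoke the basic fact that $I$ and ${\rm in}_<(I)$ have the same Hilbert function, so that the right-hand side equals $H(A[\xi]/I)=H(\tilde A)$. The one point requiring care is that this fact is usually quoted for a genuine monomial order over a polynomial ring, whereas here $<$ is only a weighted (partial) order and the coefficient ring $A$ is an arbitrary graded $\F$-algebra; I expect this to be the main obstacle. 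It can be handled by a direct filtered-vector-space argument: in each degree $e$, the finite-dimensional space $(A[\xi])_e$ carries the exhaustive increasing filtration $F_j=\operatorname{span}_\F\{a\,\xi^i\mid a\in A_{e-i},\ 0\le i\le j\}$, the associated graded of the subspace $I_e\subseteq(A[\xi])_e$ embeds into $\bigoplus_j F_j/F_{j-1}\cong(A[\xi])_e$ with image $({\rm in}_<(I))_e$, and passing to the associated graded of a filtered vector space preserves dimension; hence $\dim_\F(A[\xi]/I)_e=\dim_\F(A[\xi]/{\rm in}_<(I))_e$ for every $e$.

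It then remains to compute $H\bigl(A[\xi]/(\xi\cdot K,\xi^n)\bigr)$ and to match it with $H(\tilde A)$. Modulo $(\xi^n)$ the ring $A[\xi]$ is free over $A$ with basis $1,\xi,\dots,\xi^{n-1}$, and the image of the ideal $(\xi\cdot K)$ is $\bigoplus_{j=1}^{n-1}K\xi^j$; therefore $A[\xi]/(\xi\cdot K,\xi^n)\cong A\oplus\bigoplus_{j=1}^{n-1}T\xi^j$ as graded $\F$-vector spaces, so that $H\bigl(A[\xi]/(\xi\cdot K,\xi^n)\bigr)=H(A)+H(T)[1]+\cdots+H(T)[n-1]$. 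On the other hand, since $\tilde A$ is a cohomological blow up we have $a_n=\lambda\tau$, and Corollary~\ref{cor:splitseq} gives $H(\tilde A)=H(A)+H(T)[1]+\cdots+H(T)[n-1]$. The two Hilbert functions coincide, so the surjection $A[\xi]/(\xi\cdot K,\xi^n)\twoheadrightarrow A[\xi]/{\rm in}_<(I)$ is an isomorphism and the containment of ideals is an equality; equivalently, $\{\xi\cdot K,f_A(\xi)\}$ is a Gr\"obner basis for $I$ with respect to $<$.
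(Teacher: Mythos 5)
Your argument is correct, but it takes a genuinely different route from the paper's. The paper proves the lemma by Buchberger's criterion: it computes the $S$-polynomial $S(f_A(\xi),u\xi)=ua_1\xi^{n-1}+\cdots+ua_{n-1}\xi$ for $u\in K$, observes that the constant term $u\,\lambda\tau$ vanishes by Lemma~\ref{lem:Thomdual}, concludes that the $S$-polynomial lies in $(\xi\cdot K)$ and hence reduces to zero, and is done. You instead establish the easy containment $(\xi\cdot K,\xi^n)\subseteq{\rm in}_<(I)$ and then prove equality by comparing Hilbert functions, matching $H(A[\xi]/(\xi\cdot K,\xi^n))=H(A)+H(T)[1]+\cdots+H(T)[n-1]$ against $H(\tilde A)$ via Corollary~\ref{cor:splitseq}. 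Your filtration argument showing $H(A[\xi]/I)=H(A[\xi]/{\rm in}_<(I))$ is a good instinct: because $<$ is only a weight order and $A[\xi]$ is not a polynomial ring, the usual citation for equality of Hilbert functions (or for Buchberger's criterion, which the paper quotes from Eisenbud) needs some care, and your direct degree-by-degree associated-graded computation supplies exactly that. The trade-off is that your route is longer and imports the full strength of the blow-up hypothesis through Corollary~\ref{cor:splitseq} (which itself rests on Theorem~\ref{thm:hatAGor} and Lemma~\ref{lem:decomp}), whereas the paper's $S$-polynomial computation is a two-line local check that only uses $K\cdot\tau=0$. On the other hand, your argument makes transparent why the identity $H(\tilde A)=H(A)+\sum_j H(T)[j]$ is precisely the numerical shadow of the Gr\"obner degeneration to the fibered-product special fiber, which is a nice conceptual dividend.
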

\begin{proof}
To show that the set $\{\xi\cdot K, f_A(\xi)\}$ is a Gr\"obner basis for $I$ one utilizes Buchsberger's criterion; see \cite[Theorem 15.8]{Eisenbud}.
Since any element in $\xi\cdot K$ is equal to its initial form, the S-polynomial of any two such elements is $0$. It remains to compute the S-polynomial of $u\xi$ (with $u\in K$) and $f_A(\xi)$ which is
\[
S(f_A(\xi),u\xi)=u(\xi^n+a_1\xi^{n-1}+\cdots+a_{n-1}\xi+\lambda\tau)-u\xi\cdot \xi^{n-1}=ua_1\xi^{n-1}+\cdots+ua_{n-1}\xi
\]
since $K\cdot \lambda\tau=0$ by Lemma \ref{lem:Thomdual}. Since $S(f_A(\xi),u\xi)\in \left(\xi \cdot K \right)$, this polynomial reduces to $0$ modulo the the set $\{\xi\cdot K, f_A(\xi)\}$,  concluding the proof that this is a Gr\"obner basis. It follows that
$ {\rm in}_<(I)=({\rm in}_<(\xi\cdot K), {\rm in}_<(f_A(\xi)))=(\xi\cdot K, \xi^n).$
\end{proof}

The relationship between $\tilde{A}=A[\xi]/I$ and its initial algebra ${\rm in}_<(\tilde{A}):=A[\xi]/{\rm in}_<(I)$ is established by means of the following well-known construction; see \cite[p. 343]{Eisenbud}. Given a parameter $t$ one considers an ideal $\mathcal{I}$ of $A[\xi,t]$ given by 
\[
\mathcal{I}=\left (\xi\cdot K, \xi^n+a_1t\xi^{n-1}+\cdots+a_{n-1}\xi t^{n-1} +\lambda\tau t^n \right).
\]
Note that setting $t=1$ in $\tilde{I}$ recovers $I$ while setting $t=0$ gives $ {\rm in}_<(I)$. We recall some key properties of this construction cf.~\cite[Theorem 15.17]{Eisenbud}.

\begin{theorem}
\label{thm:flatfam}
The following ring $\mathcal{A}$ is free and hence flat as a $\F[z]$-algebra
\begin{equation}
	\label{eq:1par}
	\mathcal{A}=\frac{A[\xi,z]}{\mathcal{I}}=\frac{A[\xi,z]}{(\xi\cdot K, \xi^n+a_1\xi^{n-1}z+\cdots+a_{n-1}\xi z^{n-1} +a_nz^n)}.
	\end{equation}
	Thus $\mathcal{A}$ can be viewed as a flat family of algebras with fibers $\mathcal{A}_c=\mathcal{A}\otimes_{\F[z]}F[z]/(z-c)$ given by
$$\mathcal{A}_c\cong \begin{cases} A[\xi]/I=A[\xi]/(\xi\cdot K,f_A(\xi))=\tilde{A} & \text{if} \ c\in \F^\times\\ 
A[\xi]/{\rm in}_<(I)=A[\xi]/(\xi\cdot K,\xi^n)={\rm in}_<(\tilde{A}) & \text{if} \ c=0\\
\end{cases}$$
\end{theorem}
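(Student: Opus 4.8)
The plan is to invoke the standard deformation-to-the-initial-ideal construction, as presented in \cite[Theorem 15.17]{Eisenbud}, applied to the ideal $I=(\xi\cdot K, f_A(\xi))\subseteq A[\xi]$ and the weight order $<$ from Lemma \ref{lem:initialideal}. The one subtlety is that the coefficient ring is not a field but the Artinian Gorenstein algebra $A$; however the homogenization machinery works verbatim over any Noetherian base, so this causes no difficulty. First I would recall the construction: for a polynomial $g=\sum_i c_i\mu_i$ with $w(g)=w$, its homogenization with respect to the new variable $z$ is $g^h=\sum_i c_i\mu_i z^{w-w(\mu_i)}\in A[\xi,z]$, and the homogenized ideal $\mathcal{I}\subseteq A[\xi,z]$ is generated by $g^h$ as $g$ ranges over a Gr\"obner basis of $I$. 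By Lemma \ref{lem:initialideal} the set $\{\xi u : u\in K\}\cup\{f_A(\xi)\}$ is a Gr\"obner basis for $I$; since each $\xi u$ already equals its own initial form it homogenizes to itself, while $f_A(\xi)=\xi^n+a_1\xi^{n-1}+\cdots+a_n$ has weight $n$ and homogenizes to $\xi^n+a_1\xi^{n-1}z+\cdots+a_{n-1}\xi z^{n-1}+a_nz^n$ (recall $a_n=\lambda\tau$). This is exactly the ideal $\mathcal{I}$ displayed in the statement, so $\mathcal{A}=A[\xi,z]/\mathcal{I}$.

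**Flatness.** The key structural fact, which is the heart of \cite[Theorem 15.17]{Eisenbud}, is that because $\{\xi\cdot K, f_A(\xi)\}$ is a Gr\"obner basis, the quotient $\mathcal{A}=A[\xi,z]/\mathcal{I}$ is a free $\F[z]$-module. I would make this explicit by exhibiting a basis: by Lemma \ref{lem:decomp} (or directly, by the division argument in Lemma \ref{lem:initialideal}), a monomial $\F$-basis of $\tilde A = A[\xi]/I$ is given by $\{\beta(b)\xi^j : 0\le j\le n-1\}$ where $b$ ranges over a homogeneous $\F$-basis of $A$ (for $j\ge 1$, restricted to lifts of a basis of $T$); the same set of monomials, now regarded in $A[\xi,z]$, is a free $\F[z]$-basis of $\mathcal{A}$. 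This is the routine verification one carries out using that $\mathcal{I}$ has a Gr\"obner basis with initial ideal $(\xi\cdot K, \xi^n)$ in $A[\xi]$, independent of $z$. Freeness over $\F[z]$ immediately gives flatness over $\F[z]$.

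**Identifying the fibers.** Having established freeness, the fiber over $z=c$ is $\mathcal{A}_c = \mathcal{A}\otimes_{\F[z]}\F[z]/(z-c) = A[\xi]/\big(\xi\cdot K,\ \xi^n+a_1\xi^{n-1}c+\cdots+a_{n-1}\xi c^{n-1}+a_nc^n\big)$. For $c=0$ this is $A[\xi]/(\xi\cdot K,\xi^n)=\operatorname{in}_<(\tilde A)$ by definition. For $c\in\F^\times$, I would exhibit the isomorphism $\mathcal{A}_c\cong\tilde A$ explicitly via the graded algebra automorphism of $A[\xi]$ sending $\xi\mapsto c\,\xi$ and fixing $A$ (one checks it carries $\xi^n+a_1\xi^{n-1}c+\cdots+a_nc^n$ to $c^n f_A(\xi)$, up to the unit $c^n$, and preserves $\xi\cdot K$), so the two quotients coincide. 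This also explains Remark \ref{rem:lambda}: rescaling $\xi$ shows $\tilde A(\lambda)\cong\tilde A(\lambda\mu^n)$ whenever $\mu\in\F^\times$, matching the assertion about $\tilde A(\lambda^2)\cong\tilde A(1)$ over $\Q$.

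**Main obstacle.** There is no deep obstacle here — the theorem is a direct application of a standard flat-degeneration result once Lemma \ref{lem:initialideal} supplies the Gr\"obner basis. The only point requiring care, and the thing I would write out in detail, is that the argument of \cite[Theorem 15.17]{Eisenbud} is typically stated over a field, so one must note it applies equally with the Artinian base ring $A$ in place of $\F$; concretely, the cleanest route is to bypass the citation entirely and prove freeness by hand, producing the explicit $\F[z]$-module basis described above from the weight-independent initial ideal $(\xi\cdot K,\xi^n)$.
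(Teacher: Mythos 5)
Your proposal is correct and takes essentially the same route as the paper. In fact, the paper states Theorem \ref{thm:flatfam} with no proof at all—it simply introduces the homogenized ideal $\mathcal{I}$ and defers to \cite[Theorem 15.17]{Eisenbud}, exactly the citation you use—so what you have done is fill in the details the paper leaves implicit: the check that $\mathcal{I}$ is the homogenization of the Gr\"obner basis from Lemma \ref{lem:initialideal}, the explicit $\F[z]$-basis $\{\beta(b)\xi^j : 0\le j\le n-1\}$ extracted from Lemma \ref{lem:decomp}, and the rescaling automorphism $\xi\mapsto c\xi$ identifying $\mathcal{A}_c$ with $\tilde A$ for $c\in\F^\times$. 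Your remark about the coefficient ring being the Artinian ring $A$ rather than a field, and your suggestion to verify freeness by hand from the $z$-independent initial ideal $(\xi\cdot K,\xi^n)$ rather than leaning on the citation, is the right instinct and arguably tidies up a small gap the paper glosses over.
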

In the literature, flat families in which the general fibers are isomorphic are sometimes called \emph{jump deformations}.  We remark that Theorem \ref{thm:flatfam} remains true if we replace $\tilde{A}$ with any $\hat{A}$ from Construction \ref{con:hat}, but recall that $\hat{A}$ is Gorenstein if and only if $\hat{A}=\tilde{A}$ by Theorem \ref{thm:hatAGor}. 

Since $\tilde{A}$ is Gorenstein, Theorem \ref{thm:flatfam} implies that the general fibers of the family $\mathcal{A}$ are Gorenstein. However, Theorem \ref{thm:hatAGor} implies that the special fiber $\mathcal{A}_0$ is not Gorenstein, but is boundary-Gorenstein (Remark \ref{rem:fakeG}).  Moreover, Corollary \ref{cor:splitseq} implies that all the fibers have the same Hilbert function given by 
\[ H(\mathcal{A}_c)=H(A)+H(T)[1]+\cdots+H(T)[n-1] \qquad \text{ for all } c\in \F.\]
	
We now consider the strong Lefschetz property for the special fiber. The ring ${\rm in}_<(\tilde{A})$ can thus be recognized as a fibered product 
\[
{\rm in}_<(\tilde{A})=A\times_T B \qquad \text{ with} \qquad B=\frac{T[\xi]}{(\xi^n)}
\]
 with respect to the projections $\pi_A=\pi:A\to T$ and $\pi_B:B\to T=B/(\xi)$, where $\pi_B$ is the canonical projection.
Our proof of Lemma \ref{lem:specialfiber} below closely resembles the proof of \cite[Theorem 5.12]{IMS}, where the SLP is established for fibered products of certain AG algebras of the same socle degree. This is not the case here as $\rm{soc. deg.}(A)=\rm{soc. deg.}(B)+1$. 

\begin{lemma}
\label{lem:specialfiber}
Let $\pi:A\to T$ be a surjective homomorphism of graded AG $\F$-algebras of socle degrees $d>k$ respectively, such that both $A$ and $T$ have SLP. Assume that $\F$ has characteristic zero, or characteristic $p>d>k$. Then the initial algebra of $\tilde{A}$, $A[\xi]/{\rm in}_<(I)$ has SLP as well.
\end{lemma}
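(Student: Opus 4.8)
As noted just above, the initial algebra $\mathrm{in}_<(\tilde A)=A[\xi]/\mathrm{in}_<(I)$ is the fibered product $A\times_T B$ with respect to $\pi=\pi_A\colon A\to T$ and the canonical projection $\pi_B\colon B:=T[\xi]/(\xi^n)\to B/(\xi)=T$, so it suffices to produce a strong Lefschetz element for $A\times_T B$. I would follow the strategy of \cite[Theorem 5.12]{IMS}. Since the strong Lefschetz property is unaffected by replacing $\F$ with the infinite field $\F(t)$ (this base change preserves every graded component and every multiplication map, hence every rank), I may assume $\F$ is infinite. Choose $\ell_A\in A_1$ generic, so that $\ell_A$ is a strong Lefschetz element for $A$. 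Because $\pi$ is a degree-preserving surjection, the induced map $A_1\to T_1$ is surjective, so $\ell_T:=\pi(\ell_A)$ is generic in $T_1$ and is therefore a strong Lefschetz element for $T$. Finally, $B=T[\xi]/(\xi^n)=T\otimes_\F\F[\xi]/(\xi^n)$ has the strong Lefschetz property with element $\ell_B:=\ell_T+\xi$, by the standard fact that a tensor product (here, a free extension by $\F[\xi]/(\xi^n)$) of two algebras with SLP again has SLP, with Lefschetz element the sum, valid when $\mathrm{char}\,\F=0$ or $\mathrm{char}\,\F=p>\mathrm{soc.deg}(B)=d-1$; see e.g.~\cite{IMM,IMM0}. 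Since $\pi_B$ is the identity on $T$ and sends $\xi\mapsto 0$, we have $\pi_A(\ell_A)=\ell_T=\pi_B(\ell_B)$, so $\ell:=(\ell_A,\ell_B)$ is a legitimate element of $(A\times_T B)_1$.

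It then remains to show that $\ell$ is a strong Lefschetz element of $A\times_T B$, i.e.\ that $\times\ell^{\,j}\colon(A\times_TB)_i\to(A\times_TB)_{i+j}$ has full rank for all $i,j$. The plan is to apply $\times\ell^{\,j}$ to the defining short exact sequence $0\to(A\times_TB)_i\to A_i\oplus B_i\to T_i\to 0$ (with third map $(a,b)\mapsto\pi_A(a)-\pi_B(b)$). Because $\pi_A,\pi_B$ are ring homomorphisms and $\pi_A(\ell_A)=\pi_B(\ell_B)$, multiplication by $\ell^{\,j}$ gives a morphism of short exact sequences whose vertical maps are $\times\ell^{\,j}$, $\times\ell_A^{\,j}\oplus\times\ell_B^{\,j}$, and $\times\ell_T^{\,j}$. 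The snake lemma produces a six-term exact sequence of kernels and cokernels; since $A$, $B$, and $T$ all have SLP with the elements chosen above, the four outer ranks are known exactly ($\times\ell_A^{\,j}$, $\times\ell_B^{\,j}$, $\times\ell_T^{\,j}$ each have full rank), and one reads off $\mathrm{rank}\!\left(\times\ell^{\,j}|_{(A\times_TB)_i}\right)$ from this. This is precisely the computation carried out in \cite[Theorem 5.12]{IMS}; the one new feature is that here $\mathrm{soc.deg}(B)=\mathrm{soc.deg}(A)-1$ rather than equal, which shifts the center of symmetry of the Hilbert functions $H(A)$, $H(B)$, $H(T)$ and therefore forces a reworking of the case analysis (namely, for a given pair of degrees, deciding which of $\times\ell_A^{\,j}$, $\times\ell_B^{\,j}$, $\times\ell_T^{\,j}$ is injective and which is surjective).

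The main obstacle I expect is this last step. The snake lemma by itself only yields an Euler-characteristic identity, so to pin down $\mathrm{rank}\!\left(\times\ell^{\,j}|_{A\times_TB}\right)$ one must control the induced map $\ker(\times\ell_A^{\,j})\oplus\ker(\times\ell_B^{\,j})\to\ker(\times\ell_T^{\,j})$ (and the analogous map on cokernels) and show it has the maximal possible rank. This is not a dimension count: it rests on the compatibility of the $\F[\ell_A]$-module structure of $A$ with its quotient $T$ — concretely, that $K=\ker\pi$ is a Lefschetz submodule of $A$, i.e.\ $\ell_A^{\,j}K_i=\ell_A^{\,j}A_i\cap K_{i+j}$ for all $i,j$ — together with the corresponding statement for $\pi_B\colon B\to T$, both of which should follow from $A$, $B$, $T$ having SLP with the compatibly chosen elements $\ell_A$, $\ell_B$, $\ell_T$. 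Once these submodule statements are established, the rank computation goes through degree by degree, and by semicontinuity of Jordan type this SLP for the special fiber will then transfer to the general fiber $\tilde A$.
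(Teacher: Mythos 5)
Your framework (fibered product plus snake lemma) is genuinely different from the paper's argument, and your choice of Lefschetz elements $\ell_A$ and $\ell_B=\ell_T+\xi$ is the right one, but the step you flag as ``the main obstacle'' is indeed a gap, and the route you propose to close it --- that $K=\ker\pi_A\subset A$ and $\ker\pi_B\subset B$ are Lefschetz submodules --- is actually false. Unwinding the definition as you indicate, the condition $\ell_A^j K_i=\ell_A^jA_i\cap K_{i+j}$ is equivalent to surjectivity of the induced map $\ker(\ell_A^j\colon A_i\to A_{i+j})\to\ker(\ell_T^j\colon T_i\to T_{i+j})$. Because $d>k$, there is a range of degrees where $\ell_A^j|_{A_i}$ is injective (so the source kernel is zero) while $\ell_T^j|_{T_i}$ has nonzero kernel, and surjectivity must fail. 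A small explicit example: $A=\F[x]/(x^5)$, $T=\F[x]/(x^3)$, $\ell_A=x$, $j=1$, $i=2$; then $\ell_A K_2=0$ while $\ell_A A_2\cap K_3=\F\cdot x^3\neq 0$. The analogue for $\ker\pi_B\subset B$ fails for the same reason, since $\operatorname{soc.deg}(B)=d-1>k$. What the snake lemma actually requires is surjectivity of the \emph{combined} map $\ker(\ell_A^j)\oplus\ker(\ell_B^j)\to\ker(\ell_T^j)$ in the appropriate degree range, and the two separate Lefschetz-submodule statements neither hold nor would deliver that even if they did.

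The paper's proof sidesteps this by choosing a different decomposition of the fibered product and thereby avoiding the snake lemma entirely. The canonical algebra section $T\hookrightarrow B=T[\xi]/(\xi^n)$ identifies $A\times_TB={\rm in}_<(\tilde A)$, as a graded vector space, with $A\oplus J$ where $J=\ker\pi_B=T\xi\oplus\cdots\oplus T\xi^{n-1}\cong\xi\cdot T[\xi]/(\xi^{n-1})$. The key observation is that $J$ is an \emph{ideal} of ${\rm in}_<(\tilde A)$ (whereas $A$ is only a subalgebra), so multiplication by $\ell=\ell_A+\xi$ is block lower-triangular relative to this splitting, with diagonal blocks $\times\ell_A$ on $A$ and $\times\ell|_J$ on $J$. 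Now $J$ is a shift of the free extension $T[\xi]/(\xi^{n-1})$ of $T$, so it has SLP with element $\ell_T+\xi$ by \cite[Theorem 6.1]{HW} or \cite[Theorem 2.6]{IMM2} (this is where the characteristic hypothesis enters), and --- crucially --- $H(A)$ and $H(J)$ are both unimodal and \emph{both symmetric about $d/2$} (whereas $H(B)$ is centered at $(d-1)/2$). Hence for each $i,j$ the two diagonal blocks $\ell_A^j\colon A_i\to A_{i+j}$ and $\ell^j|_J\colon J_i\to J_{i+j}$ are simultaneously injective or simultaneously surjective, and a block lower-triangular map whose diagonal blocks are both injective (resp.\ both surjective) is injective (resp.\ surjective). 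This is exactly the ``reworking of the case analysis'' you anticipated: by replacing $B$ with its ideal $J$, the three mismatched centers of symmetry that worried you collapse to the single relevant one, and the rank count becomes immediate.
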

\begin{proof}
Recall from Lemma \ref{lem:initialideal} that 
\[
{\rm in}_<(\tilde{A})=\frac{A[\xi]}{{\rm in}_<(I)}=\frac{A[\xi]}{(\xi\cdot K, \xi^n)}.
\]
This algebra admits a decomposition
\[
{\rm in}_<(\tilde{A})=A\oplus J, \text{ where } J=T\xi\oplus \cdots \oplus T\xi^{n-1}=\xi\cdot \frac{T[\xi]}{(\xi^{n-2})}.
\]
The important point here is that in the algebra ${\rm in}_<(\tilde{A})$, the vector space $J$ is actually an ideal, and thus multiplication by a linear form $\ell=\ell_A+\xi$ with $\ell_A\in A_1$ is represented with respect to the above decomposition by a block matrix
\begin{equation}
\label{eq:blockmatrix}
\begin{pmatrix}
\ell_A & 0 \\
* & \ell|_J
\end{pmatrix}
\end{equation}
where $\ell_A$ stands for the map given by multiplication by $\ell_A$ on $A$ and $ \ell|_J$ denotes the restriction of multiplication by $\ell$ to $J$. Since $A$ and $T$ have SLP, one can pick $\ell_A$ so that both $\ell_A$ and its image $\ell_T=\overline{\ell_A}$  in $T$ are strong Lefschetz elements. Then by  \cite[Theorem 6.1]{HW} if characteristic $\F$ is zero, or \cite[Theorem 2.6]{IMM2} in general $\ell=\ell_T+\xi$ is also a strong Lefschetz element on the free extension $T[\xi]/(\xi^{n-2})$ of $T$,  and hence on $J$.  Since the Hilbert functions of $A$ and $J$ are both symmetric around $d/2$ (due to both being AG) and unimodal (due to both having SLP) we conclude that $\ell_A^j:A_i\to A_{i+j}$ and $\ell|_J^j:J_i\to J_{i+j}$ are simultaneously injective or simultaneously surjective. It follows from \eqref{eq:blockmatrix} that $\ell$ is a strong Lefschetz element on ${\rm in}_<(\tilde{A})$.
	\end{proof}
		
We are now ready to prove that cohomological blow-up algebras have SLP.

\begin{theorem}
\label{thm:SLPblowup}
Let $\F$ be an infinite field and let $\pi:A\to T$ be a surjective homomorphism of  graded AG $\F$-algebras of socle degrees $d>k$ respectively such that both $A$ and $T$ have SLP.  Assume that characteristic $\F$ is zero or characteristic $F$ is $ p>d$. Then every cohomological blow-up algebra of $A$ along $T$ satisfies SLP.
\end{theorem}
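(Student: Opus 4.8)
The plan is to deduce Theorem~\ref{thm:SLPblowup} from the flat family of Theorem~\ref{thm:flatfam} together with the SLP of the special fiber established in Lemma~\ref{lem:specialfiber}, via semicontinuity of Jordan type. Concretely, I would argue as follows. By Theorem~\ref{thm:flatfam}, the algebra $\mathcal{A}=A[\xi,z]/\mathcal{I}$ is free, hence flat, over $\F[z]$, and its fibers are $\mathcal{A}_c\cong\tilde{A}$ for every $c\in\F^\times$ and $\mathcal{A}_0\cong {\rm in}_<(\tilde{A})$. By Lemma~\ref{lem:specialfiber}, under the stated hypotheses on $\F$ the special fiber ${\rm in}_<(\tilde{A})$ has SLP; moreover all fibers share the same Hilbert function $H=H(A)+H(T)[1]+\cdots+H(T)[n-1]$ by Corollary~\ref{cor:splitseq}.

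Next I would set up the Lefschetz element in a family. Choose $\ell_A\in A_1$ so that $\ell_A$ is a strong Lefschetz element for $A$ and its image $\ell_T=\pi(\ell_A)$ is a strong Lefschetz element for $T$; since $\F$ is infinite and $A$, $T$ have SLP, the sets of such elements are nonempty Zariski-open, and their preimage/intersection is again nonempty open, so such an $\ell_A$ exists. Then $\ell=\ell_A+\xi$ gives a well-defined linear form in each fiber $\mathcal{A}_c$; equivalently, $\ell$ defines an $\F[z]$-linear ``multiplication'' endomorphism of the free $\F[z]$-module $\mathcal{A}$. For each exponent $j$ and each degree $i$, the map $\times\ell^j\colon \mathcal{A}_i\to \mathcal{A}_{i+j}$ is given, in a basis of the free $\F[z]$-module, by a matrix with entries in $\F[z]$; its rank is lower-semicontinuous in $z$, so the locus where the rank equals the generic value is Zariski-open in $\Spec\F[z]=\mathbb{A}^1_\F$. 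By Lemma~\ref{lem:specialfiber}, at $z=0$ the rank of $\times\ell^j$ already attains the maximal possible value $\min\{\dim_\F A_i,\dim_\F A_{i+j}\}$ permitted by the (common) Hilbert function $H$; hence the generic rank is also maximal, and therefore the rank is maximal for all $z$ in a nonempty open set $U_{i,j}\subseteq\mathbb{A}^1_\F$. Intersecting the finitely many $U_{i,j}$ over all relevant pairs $(i,j)$ yields a nonempty open set $U$; since $\F$ is infinite, $U$ contains a point $c\in\F^\times$, and for that $c$ the element $\ell$ is a strong Lefschetz element of $\mathcal{A}_c\cong\tilde{A}$. Finally, by Remark~\ref{rem:lambda} every cohomological blow-up algebra of $A$ along $\pi$ arises (up to isomorphism, after possibly rescaling orientations) as some such $\tilde{A}$ with parameter $\lambda$ equal to any prescribed nonzero value, so this establishes SLP for every cohomological blow-up algebra of $A$ along $T$; alternatively one invokes \cite[Corollary 2.44]{IMM0} directly since semicontinuity of Jordan type is exactly the statement that having a given Jordan type is a closed condition, and $H^\vee$ is the maximal Jordan type compatible with $H$.

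The main obstacle, and the step requiring the most care, is making precise that the special fiber attaining SLP forces a \emph{general} fiber to attain it: one must check that SLP is genuinely detected by open conditions on $z$ and that the relevant ranks cannot \emph{exceed} the value forced by the Hilbert function, so that ``maximal at $z=0$'' implies ``maximal generically.'' This is exactly the content of semicontinuity of Jordan type in a flat family (the Jordan type of $\times\ell$ on $\mathcal{A}_c$ dominates that of $\times\ell$ on $\mathcal{A}_0$ in the dominance order, and the Jordan type $H^\vee$ of the special fiber is the unique maximum compatible with $H$), which is available as \cite[Corollary 2.44]{IMM0}; the only subtlety is that all fibers must have the same Hilbert function (true here by Corollary~\ref{cor:splitseq}) so that the target of the comparison is fiber-independent. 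One also needs to confirm that $\ell=\ell_A+\xi$ specializes correctly in every fiber and that the characteristic hypothesis $p>d$ (needed in Lemma~\ref{lem:specialfiber} for the free-extension SLP result) is inherited by $\tilde{A}$, which has the same socle degree $d$; both are routine.
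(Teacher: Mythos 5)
Your proof is correct and follows essentially the same approach as the paper: construct the flat family $\mathcal{A}$ over $\F[z]$ (Theorem~\ref{thm:flatfam}), verify SLP for the special fiber $\mathcal{A}_0={\rm in}_<(\tilde{A})$ via Lemma~\ref{lem:specialfiber}, and transfer it to a general fiber $\mathcal{A}_c\cong\tilde{A}$ ($c\neq0$) by semicontinuity of Jordan type \cite[Corollary 2.44]{IMM0}, using that all fibers share the Hilbert function from Corollary~\ref{cor:splitseq}. Your added paragraph invoking Remark~\ref{rem:lambda} to cover ``every'' blow-up is unnecessary: the flat family is built from the specific parameters of the blow-up in question, so each $\tilde{A}$ comes with its own family, and no rescaling of orientations or parameters is needed.
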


\begin{proof}
	We have seen that $\mathcal{A}$ is a flat family.  By Lemma \ref{lem:specialfiber}, the special fiber $\mathcal{A}_0$ has SLP.  Therefore by semicontinuity of Jordan type \cite[Corollary 2.44]{IMM0}, and since $\F$ is infinite, we deduce that there is some $c\neq 0$ for which the general fiber $\mathcal{A}_c$ has SLP.  By Theorem \ref{thm:flatfam} it follows that the cohomological blow $\tilde{A}=\mathcal{A}_{1}$ has SLP as well. \end{proof}

The following example shows that our assumptions on the characteristic of the field in Theorem \ref{thm:SLPblowup} are necessary.
\begin{example}
	\label{ex:failingSLP}
	Let $d$ and $k$ be integers satisfying ${2k<d}$, and, following the
	usual notation, write ${n=d-k}$. Let $\F$ be a field of characteristic
	$p$, and suppose there is an integer ${m\ge1}$ such that  ${n<p^m\le
		d-2}$. Consider the AG algebras
	\[
	A=\frac{\F[x,y]}{\operatorname{Ann}(X^d+Y^d)}=\frac{\F[x,y]}{\left(xy,x^d-y^d\right)}
	\qquad\text{and}\qquad
	T=\frac{\F[x,y]}{\operatorname{Ann}(X^k)}=\frac{\F[x,y]}{\left(y,x^{k+1}\right)}
	\cong\frac{\F[x]}{\left(x^{k+1}\right)}.
	\]
	The Thom class of the natural surjection ${\pi:A\to T}$ is ${\tau=x^n}$,
	and choosing ${f_R(\xi)=\xi^n-x^n}$ and ${h_R(\xi)=1}$ as in Construction
	\ref{con:hatMD}, we get a cohomological blow-up algebra
	\[
	\tilde{A}=\frac{\F[x,y,\xi]}{\operatorname{Ann}(X^d+Y^d+X^k\Xi^n)}=\frac{\F[x,y,\xi]}{\left(xy,x^d-y^d,y\xi,x^{k+1}\xi,\xi^n-x^n\right)}.
	\]
	We can easily check that ${\ell_A=x+y}$ and ${\ell_T=x}$ are Lefschetz
	elements in $A$ and $T$, respectively. We also know that the Hilbert
	function of $\tilde{A}$ satisfies
	${H\bigl(\tilde{A}\bigr)_{1}=H\bigl(\tilde{A}\bigr)_{d-1}=3}$. This means
	that if $\tilde{A}$ satisfies the SLP, multiplication by ${\ell^{d-2}}$,
	for a general ${\ell\in \tilde{A}_1}$ must have rank $3$. However, writing
	${\ell=ax+by+c\xi}$, we have
	${\ell^{p^m}=a^{p^m}x^{p^m}+b^{p^m}y^{p^m}+c^{p^m}\xi^{p^m}=a^{p^m}x^{p^m}+b^{p^m}y^{p^m}}$,
	becuse ${\xi^{p^m}}$ is zero in $\tilde{A}$. Therefore ${\ell^{p^m}\xi=0}$
	(note that ${p^m>k}$), meaning that multiplication by $\ell^{p^m}$ has
	rank less than $3$, hence so does multiplication by ${\ell^{d-2}}$.
	Therefore $\tilde{A}$ does not satisfy the SLP.
\end{example}

Theorem \ref{thm:SLPblowup} once again singles out cohomological blow-up algebras among connected sums. Connected sums $A\#_\F B$ over a field $\F$ of two strong or weak Lefschetz algebras $A,B$ are strong or weak Lefschetz, respectively; however taking connected sum $A\#_TB$  over an arbitrary AG algebra $T$ may not in general preserve SLP or WLP \cite[Section 5.2]{IMS}.

The following example shows that the converse of Theorem \ref{thm:SLPblowup} is not true: if the cohomological blowup $\hat{A}$ has SLP it does not follow that $A$ has SLP. In other words, while the process of blowing up preserves SLP, the process of blowing down does not preserve SLP, nor even WLP.
\begin{example}
	\label{ex:Rodrigo}
The following example, originally due to U. Perazzo, but re-examined more recently by R. Gondim and F. Russo \cite{GR}, is an AG algebra with unimodal Hilbert function which does not have SLP or WLP:
	$$A=\frac{\F[x,y,z,u,v]}{\operatorname{Ann}(XU^2+YUV+ZV^2)}=\frac{\F[x,y,z,u,v]}{\left(x^2,xy,y^2,xz,yz,z^2,u^3,u^2v,uv^2,v^3,xv,zu,xu-yv,zv-yu\right)}.$$
	Taking the quotient $T$ of $A$ given by the Thom class $\tau=u^2$ yields
	$$T=\frac{\F[x,y,z,u,v]}{\operatorname{Ann}(X)}=\frac{\F[x,y,z,u,v]}{\left(x^2,y,z,u,v\right)}\cong\frac{\F[x]}{(x^2)}.$$
	Fix a parameter $\lambda\in\F$ and define  polynomials $f_T(\xi)\in T[\xi]$ and $f_A(\xi)\in A[\xi]$ by
	$$f_T(\xi)=\xi^2-\lambda x\xi \quad \text{ and } f_A(\xi)=\xi^2-\lambda x\xi+u^2.$$
	Denoting the ideal of relations of $A$ by $I$ we obtain the cohomological blowup 
	$$\tilde{A}=\frac{\F[x,y,z,u,v, \xi]}{I+\xi(y,z,u,v)+(f_A(\xi))}, $$
	which has Hilbert function $H(\tilde{A})=H(A)+H(T)[1]=(1,6,6,1)$.
	Fix $\F$-bases 
	$$\tilde{A}_1=\operatorname{span}_{\F}\left\{x,y,z,u,v, \xi\right\}, \ \ \text{and} \ \ \tilde{A}_2=\operatorname{span}_{\F}\left\{u^2,uv,v^2,yv,yu,-x\xi\right\}$$
	and let $\ell\in\hat{A}_1$ be a general linear form 
	$$\ell=ax+by+cz+du+ev+f\xi.$$
	Then the matrix for the Lefschetz map $\times\ell\colon \hat{A}_1\rightarrow\hat{A}_2$ and its determinant are given by
	$$M = \left(\begin{array}{cccccc}
	0 & 0 & 0 & d & 0 & -f\\
	0 & 0 & 0 & e & d & 0\\
	0 & 0 & 0 & 0 & e & 0\\
	d & e & 0 & a & b & 0\\
	0 & d & e & b & c & 0\\
	-f & 0 & 0 & 0 & 0 & -(a+\lambda f)\\
	\end{array}\right)\Rightarrow \det(M)=f^2e^4.$$ 
Thus $\ell$ is a strong Lefschetz element  for $\tilde{A}$ if and only if $e\cdot f\neq 0$.  In particular $\tilde{A}$ satisfies SLP and also WLP.
\end{example}

Surprisingly, the analogous result to Theorem \ref{thm:SLPblowup} does not hold for the weak Lefschetz property.  The obstruction to establishing such a result via an analogue of Lemma \ref{lem:specialfiber} is that  the tensor product of two weak Lefschetz algebras need not be weak Lefschetz. Examples of non-AG quadratic algebras which demonstrate this are given in \cite[\S 4.1]{MNS}. Here we point out an AG example. The AG algebra $T$ in Example \ref{8.7ex} below is an example of R. Gondim, quoted as \cite[Example 3.4]{IMM0}. Using the Clebsch-Gordan Theorem \cite[Theorem 3.29]{H-W}, one can see that if $\cha \F=0$ or $\cha \F\geq 7$, then the tensor product $T\otimes B$, where $B= \F[\xi]/(\xi^2)$, has Jordan type (for generic linear form) the tensor product of $P_T=(5,3,3,3,2,2)$ and $P_B=(2)$, which is $(6,4^4,3^2,2^3,1^2)$ with 12 parts; on the other hand $H(T\otimes B)=(1,6,11,11,6,1)$, hence $H(T\otimes B)^\vee=(6,4^5,2^5)$ has so $T\otimes B$ is not weak Lefschetz.

Building upon this, we give an example illustrating that blowing up does not preserve WLP.
	
	\begin{example}\label{8.7ex}
	Consider the following algebra 
	$$A=\frac{\F[x,y,z,u,v]}{\operatorname{Ann}(XU^6+YU^4V^2+ZU^5V)}=\frac{\F[x,y,z,u,v]}{\left(yz,xz,xy,vy-uz,vx,ux-vz,u^5y,u^5v^2,u^6v,u^7,v^3,x^2,y^2,z^2\right)}$$
with $H(A)=(1, 5, 6, 6, 6, 6, 5, 1)$	and its quotient  corresponding to the Thom class $\tau=u^3$
		$$T=\frac{\F[x,y,z,u,v]}{\operatorname{Ann}(XU^3+YUV^2+ZU^2V)}=\frac{\F[x,y,z,u,v]}{\left(z^2,yz,xz,y^2,xy,vy-uz,x^2,vx,ux-vz,u^2y,v^3,u^2v^2,u^3v,u^4 \right)}$$
		with $H(T)=(1, 5, 6, 5, 1)$. Both $A$ and $T$ satisfy WLP, but not SLP: for a generic $\ell$ the Jordan types are $P_{A,\ell}=(8,6,6,6,5,5)$, and $P_{T,\ell}=(5,3,3,3,2,2)$.
	Denoting the ideal of relations of $A$ by $I$ and the ideal of relations of $T$ by $K$, consider the cohomological blow-up algebra
	$$\tilde{A}=\frac{\F[x,y,z,u,v, \xi]}{I+\xi\cdot K+(\xi^3-u^3)}, $$
	which has Hilbert function $H(\tilde{A})=H(A)+H(T)[1]+H(T)[2]=(1, 6, 12, 17, 17, 12, 6, 1)$ and dual Macaulay generator $\tilde{F}= \Xi^3G+\Xi^6X+F$, here $\lambda=-1$ in the Construction \ref{con:hatMD}.
		Fix the following bases for the 17-dimensional vector spaces
\begin{eqnarray*}
		\tilde{A}_3 &=& \operatorname{span}_{\F}\left\{x\xi^2, yu^2, yuv, yu\xi, yv^2, yv\xi, y\xi^2, zv\xi, z\xi^2, u^2v, u^2\xi, uv^2, uv\xi, u\xi^2, v^2\xi, v\xi^2, \xi^3\right\}\\
\tilde{A}_4 &=& \operatorname{span}_{\F}\left\{x\xi3, yuv\xi, yu\xi^2, yv2\xi, yv\xi^2, y\xi^3, zv\xi^2, z\xi^3, u^2v^2, u^2v\xi, u^2\xi^2, uv^2\xi, uv\xi^2, u\xi^3, v2\xi^2, v\xi^3, \xi^4\right\}
\end{eqnarray*}
	Taking a general linear form $\ell\in\tilde{A}_1$ as 
	$\ell=ax+by+cz+du+ev+f\xi$
	we compute the matrix for the Lefschetz map $\times\ell\colon \tilde{A}_3\rightarrow\hat{A}_4$ and its determinant, which shows $\tilde{A}$ does not satisfy WLP
	$$
	\small
	M =
	{\left({\begin{array}{ccccccccccccccccc}
      f&0&e&0&d&0&0&0&0&c&0&b&0&0&0&0&a\\
      0&0&f&e&0&d&0&0&0&0&c&0&b&0&0&0&0\\
      0&0&0&f&0&0&d&0&0&0&0&0&0&b&0&0&0\\
      0&0&0&0&f&e&0&d&0&0&a&0&c&0&b&0&0\\
      0&0&0&0&0&f&e&0&d&0&0&0&0&c&0&b&0\\
      0&d&0&0&0&0&f&0&0&0&0&0&0&0&0&0&b\\
      d&0&0&0&0&0&0&f&e&0&0&0&0&a&0&c&0\\
      0&e&d&0&0&0&0&0&f&b&0&0&0&0&0&0&c\\
      0&0&0&0&0&0&0&0&0&e&0&d&0&0&0&0&0\\
      0&0&0&0&0&0&0&0&0&f&e&0&d&0&0&0&0\\
      0&0&0&0&0&0&0&0&0&0&f&0&0&d&0&0&0\\
      0&0&0&0&0&0&0&0&0&0&0&f&e&0&d&0&0\\
      0&0&0&0&0&0&0&0&0&0&0&0&f&e&0&d&0\\
      0&0&0&0&0&0&0&0&0&0&0&0&0&f&0&0&d\\
      0&0&0&0&0&0&0&0&0&0&0&0&0&0&f&e&0\\
      0&0&0&0&0&0&0&0&0&d&0&0&0&0&0&f&e\\
      0&0&0&0&0&0&0&0&0&0&d&0&0&0&0&0&f\\
      \end{array}}\right)}
	\Rightarrow \det(M)=0.$$
	A Macaulay2 \cite{M2} calculation gives the generic Jordan type $P_{\tilde{A}}=(8,6^5,4^6,2^4,1^2)$ with $18$ parts, whereas the conjugate partition of the Hilbert function is
	$H(\tilde{A})^\vee=(8,6^5,4^6,2^5)$ with $17$ parts, which implies that $\tilde{A}$ has neither SLP nor WLP. 
	\end{example}
	
In Example \ref{8.7ex} the Thom class of the map $A\to T$ has degree 3. This is the minimal possible value for such an example based on the following result.

\begin{theorem}
\label{prop:codim2}
Let $\F$ be an infinite field and let $\pi:A\to T$ be a surjective homomorphism of  graded AG $\F$-algebras such that the difference between the socle degrees of $A$ and $T$ is at most 2 and $A$ and $T$ both satisfy WLP. Then every cohomological blow-up algebra of $A$ along $\pi$ satisfies WLP.
\end{theorem}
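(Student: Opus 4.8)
The plan is to mimic the proof of Theorem~\ref{thm:SLPblowup}, replacing SLP by WLP throughout, and to exploit the one feature special to the range $n:=d-k\le 2$: in the special fiber of the flat family $\mathcal{A}$ of Theorem~\ref{thm:flatfam}, the ``fiber direction'' is, up to a degree shift, just a copy of $T$. First I would dispose of the case $n=1$: then $k=d-1$ and, as noted in the proof of Lemma~\ref{lem:Thomhat}, $\tilde{A}=A$, so $\tilde{A}$ has WLP by hypothesis. Assume $n=2$ from now on. Take the flat family $\mathcal{A}$ over $\F[z]$ from Theorem~\ref{thm:flatfam}: its general fibers are isomorphic to $\tilde{A}$ and its special fiber is $\mathcal{A}_0={\rm in}_<(\tilde{A})=A[\xi]/(\xi\cdot K,\xi^2)$. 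By semicontinuity of Jordan type (as in the proof of Theorem~\ref{thm:SLPblowup}, via \cite[Corollary~2.44]{IMM0}) it is enough to show that $\mathcal{A}_0$ has WLP; equivalently, since the number of parts of the generic Jordan type of any graded Artinian algebra is at least its Sperner number, it suffices to exhibit a linear form $\ell$ on $\mathcal{A}_0$ whose generic Jordan type has exactly $\operatorname{Sperner}(H(\tilde{A}))$ parts.

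Next I would analyze $\mathcal{A}_0$ explicitly. Writing $\mathcal{A}_0=A\oplus J$ with $J=T\xi$, the relation $\xi^2=0$ makes $J$ an ideal of $\mathcal{A}_0$, and as a graded $T$-module (via $\beta_0$) one has $J\cong T[-1]$. Since $\F$ is infinite, $\pi\colon A_1\to T_1$ is surjective, and the WLP loci of $A$ and of $T$ are nonempty Zariski-open subsets of $A_1$ and $T_1$, I can choose $\ell_A\in A_1$ so that $\ell_A$ is a weak Lefschetz element for $A$ and $\ell_T:=\pi(\ell_A)$ is one for $T$. Put $\ell=\ell_A+\xi$. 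With respect to $\mathcal{A}_0=A\oplus J$, multiplication by $\ell$ in degree $i$ is the block lower-triangular map
\[
\begin{pmatrix}\times\ell_A & 0\\ \pi & \times\ell_T\end{pmatrix}\colon A_i\oplus T_{i-1}\longrightarrow A_{i+1}\oplus T_i,
\]
where the lower-left block is $\pi$ because $\ell\cdot\beta(a)\equiv\beta(\ell_A a)+\pi(a)\xi$, the lower-right block is $\times\ell_T$ because $\ell\cdot(t\xi)\equiv(\ell_T t)\xi$, and the upper-right block vanishes because $\ell\cdot(t\xi)$ has no component in $A$. Both diagonal blocks have maximal rank, so $\dim\ker$ of this matrix is at most $\dim\ker(\times\ell_A|_{A_i})+\dim\ker(\times\ell_T|_{T_{i-1}})$; summing over $i$ and using that $\ell_A$ and $\ell_T$ are weak Lefschetz elements, the generic Jordan type of $\mathcal{A}_0$ has at most $\operatorname{Sperner}(H(A))+\operatorname{Sperner}(H(T))$ parts.

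To close the argument I would compare Sperner numbers. Because $A$ and $T$ are AG with WLP their Hilbert functions are symmetric and unimodal, $H(A)$ being symmetric about $d/2$ and $H(J)=H(T)[1]$ about $k/2+1=d/2$; this is exactly where the hypothesis $d-k\le 2$ is used. Having a common peak forces $\operatorname{Sperner}(H(\tilde{A}))=\operatorname{Sperner}\bigl(H(A)+H(T)[1]\bigr)=\operatorname{Sperner}(H(A))+\operatorname{Sperner}(H(T))$. Together with the previous bound and the general inequality $\operatorname{Sperner}(H(\mathcal{A}_0))\le(\text{number of parts of the generic Jordan type of }\mathcal{A}_0)$, this pins the generic Jordan type of $\mathcal{A}_0$ to have exactly $\operatorname{Sperner}(H(\tilde{A}))$ parts, so $\mathcal{A}_0$ has WLP, and hence so does $\tilde{A}$.

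I expect the hard part to be the last step: checking that the block-triangular multiplication matrix on $\mathcal{A}_0$ really attains the expected rank in every degree, which above is packaged into the Sperner-number count and ultimately rests on $H(A)$ and $H(T)[1]$ being unimodal with a common peak at $d/2$. This is precisely the place where $n\le 2$ is essential: for $n\ge 3$ the fiber direction in the special fiber is instead $\bigl(T\otimes_{\F}\F[\xi]/(\xi^{\,n-1})\bigr)[-1]$, whose WLP does not follow from that of $T$, and indeed Example~\ref{8.7ex}, where $n=3$, fails WLP. The remaining ingredients --- the flat degeneration, semicontinuity of Jordan type, and the explicit block form of the multiplication map --- are routine and run parallel to the proof of Theorem~\ref{thm:SLPblowup}.
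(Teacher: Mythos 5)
Your proof is correct and follows essentially the same route as the paper: degenerate to the special fiber via the flat family of Theorem~\ref{thm:flatfam}, analyze the block lower-triangular multiplication map on the decomposition $A\oplus T\xi$, and conclude WLP for the special fiber from the coincidence of the peaks of $H(A)$ and $H(T)[1]$, exactly as in the WLP analogue of Lemma~\ref{lem:specialfiber}. Your packaging of the final step through Sperner-number counting is only a minor variation of the paper's direct degree-by-degree rank check, and correctly fills in the details that the paper leaves implicit in its one-sentence argument.
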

\begin{proof}
The hypothsis on the socle degreed of $A$ and $T$ translates into $n\leq 2$.
The proofs of Lemma \ref{lem:specialfiber} in the spacial cases $J=0$ (for $n=1$) or $J=T$ (for $n=2$) and Theorem \ref{thm:SLPblowup} go through upon replacing SLP by WLP throughout.
\end{proof}

\section{Geometric View and Examples}\label{geomsec}
As in the Introduction, our motivation for studying the cohomological blow-up algebras stems from the blow up construction in algebraic geometry.  That the cohomology ring of the blow up of a compact complex manifold along a closed complex submanifold satisfies the conditions of Theorem \ref{lem:buchar} can be pieced together from results in the book of P. Griffiths and J. Harris \cite[Chapter 4, Section 6]{GH}; see also the paper of D. McDuff \cite[Proposition 2.4]{McDuff} for the statements in the symplectic category.  The cohomology of the blow up was also studied by S. Gitler who has obtained a presentation, as in Equation \eqref{eq:cohomologybu}, of the cohomology of the blow up of a complex manifold along a complex submanifold in the case of a surjective restriction map as in Equation \eqref{eq:cohomologybu} \cite[Theorem 3.11]{Gitler}.  An analogous presentation for the Chow ring of the blow up of an algebraic variety along a regularly embedded subvariety with a surjective restriction map also appears in the paper of S. Keel \cite[Appendix, Theorem 1]{Keel}.

Blowing up and blowing down are fundamental building blocks in birational geometry.  For example, in the theory of algebraic surfaces, a classical result states that every birational map between algebraic surfaces (smooth complex projective variety of dimension two) admits a strong factorization, meaning it factors as a sequence of blow ups followed by a sequence of blow downs \cite[Corollary II.12]{Beauville}.  More recently D. Abramovich et al. have proved the weak factorization conjecture, a higher dimensional analogue which states that every birational map between complete non-singular algebraic varieties over an algebraically closed field of characteristic zero factors as a product of blow ups and blow downs (in no specific order) \cite[Theorem 0.1.1]{AKMW}.  In the theory of (smooth, projective) toric varieties, one can be quite explicit with these factorizations by working with the associated (simplicial, polytopal) fan, where blowing up along a toric subvariety corresponds to subdividing a cone of the fan \cite[Proposition 3.3.15]{CLS}.  As we have seen in Section \ref{Lefsec} concerning the strong Lefschetz Property, and will show in a sequel \cite{MMSW} concerning the Hodge-Riemann bilinear relations (HRR), factorizations in terms of blow ups and blow downs can be useful in establishing SLP and HRR. 

P. McMullen \cite{McMullen} in his proof of SLP and HRR for the polytope algebra, an AG algebra which he shows is isomorphic to a certain Artinian reduction of the Stanley-Reisner ring of the corresponding simplicial polytopal fan, has given an explicit formula for a (weak) factorization of the birational map $\P^n\supset(\C^*)^n\rightarrow X$ where $X$ is a smooth projective toric variety of dimension $n$; see also V. A. Timorin \cite{Timorin} for an exposition from the point of view of Macaulay duality.  K. Karu \cite{Karu} used similar arguments in his proof of SLP and HRR for non-simplicial polytopal fans; see also \cite{BBFK,BL}.  More recently F. Ardila, G. Denham, and J. Huh \cite{ADH} have exploited blow up factorizations to prove SLP and HRR for the Chow ring of the Bergman fan associated to a matroid. Especially important in their work is the special case of blow ups which correspond to edge subdivisions in these Bergman fans, and in fact they factor any blow up as a sequence of successive edge subdivison blow ups and blow downs. 
Geometrically, these edge subdivisions correspond to cohomological blow ups with $n=2$. Theorem \ref{prop:codim2} indicates that this $n=2$ scenario is particularly favorable in terms of ascent of the weak Lefschetz property to the cohomological blow-up algebra but it does not necessarily guarantee the descent of WLP or SLP to the blow-down, see Example \ref{ex:Rodrigo}.

Algebraically, one might say that two oriented graded AG algebras $A$ and $A'$ are \emph{birationally equivalent} if there is a sequence of oriented graded AG algebras $(A_0,A_1,\ldots,A_m)$ of some fixed socle degree $d$, where $A_0=A$ and $A_m=A'$ and for each $i$, $A_i$ is either a cohomological blow up or a cohomological blow down of $A_{i-1}$.  We give an example below of several birationally equivalent AG algebras using fans corresponding to smooth projective toric surfaces; in fact we derive a (strong) factorization of the birational map $\P^2\dashrightarrow \P^1\times\P^1$.  First we recall a few fundamental facts on toric varieties and their associated fans; for further details we refer the reader to the book \cite{CLS}.  In these geometric examples, as in the Introduction, we take cohomology with coefficients over the rationals $\F=\Q$.

A presentation for the cohomology ring of a complete simplicial toric variety can be obtained from the associated fan in the following explicit manner.
 Let $\Sigma$ be a complete simplicial fan and let $X$ be the corresponding toric variety. Let $\rho_1, \ldots, \rho_r$ be the rays of $\Sigma$, each $\rho_i$ having minimal generator $u_i$. Introduce a variable $x_i$ for each $\rho_i$. In the ring $\mathbb{Q}[x_1,\ldots, x_r]$, let $I$ be the square-free monomial ideal
\[
 I = (x_{i_1}\cdots x_{i_s} \mid i_1<\cdots <i_s \text{ and } \rho_{i_1},\ldots, \rho_{i_s} \text{ are not part of the same cone of }\Sigma).
 \]
We call $I$ the Stanley-Reisner ideal of $\Sigma$. Let $J$ be the ideal generated by the linear forms
\[
J=\left (\sum_{i=1}^r\langle m, u_i\rangle x_i\mid m\in \sum_{i=1}^r u_i\mathbb{Z} \right).
\]
By  \cite[Theorem 12.4.1]{CLS} the singular cohomology ring of $X$  can be presented as  
\begin{equation}
\label{eq:tvcohom}
H^{{2\bullet}}(X,\mathbb{\Q}) \cong \frac{\mathbb{\Q}[x_1,\ldots, x_r]}{I +J}.
\end{equation}

The following example shows that the oriented graded AG algebras $A_1=\Q[x]/(x^3)$, $A_2=\Q[y,z]/(y^2,z^2-yz)$, $A_3=\Q[r,s,t]/(rt,st,r^2,s^2,t^2+rs)=A_4$, and $A_5=\Q[u,v]/(u^2,v^2)$ are all birationally equivalent to one another.  This corresponds to the well known fact from algebraic geometry that if $\mathbb{P}^2$ is blown up at two points and the proper transform of the line joining the two points is blown down, the resulting surface is isomorphic to $\mathbb{P}^1\times \mathbb{P}^1$. We work through the details of this example from the perspective of cohomological blow ups.

\begin{example}\label{toricex}

The algebraic varieties featured in this example correspond to the following fans
\smallskip

\begin{tikzpicture}
\begin{scope}[shift = {(-0.5,0)}]
\draw[line width=0.25mm, -to] (0,0) -- (1,0);
\draw[line width=0.25mm, -to] (0,0) -- (0,1);  
\draw[line width=0.25mm, -to] (0,0) -- (-1,-1);  
\node at (1.2,0) {$u_1$};
\node at (0,1.2) {$u_2$};
\node at (-1.2,-1.2) {$u_3$};
\node at (0,-2) {$X_1=\mathbb{P}^2$};
\end{scope}

\begin{scope}[shift = {(3,0)}]
\draw[line width=0.25mm, -to] (0,0) -- (1,0);
\draw[line width=0.25mm, -to] (0,0) -- (0,1);  
\draw[line width=0.25mm, -to] (0,0) -- (-1,-1);  
\draw[line width=0.25mm, -to] (0,0) -- (0,-1);  
\node at (1.2,0) {$u_1$};
\node at (0,1.2) {$u_2$};
\node at (-1.2,-1.2) {$u_3$};
\node at (0,-1.2) {$u_4$};
\node at (0,-2) {$X_2=\tilde{X}_1$};
\end{scope}

\begin{scope}[shift = {(7.5,0)}]
\draw[line width=0.25mm, -to] (0,0) -- (1,0);
\draw[line width=0.25mm, -to] (0,0) -- (0,1);  
\draw[line width=0.25mm, -to] (0,0) -- (-1,-1);  
\draw[line width=0.25mm, -to] (0,0) -- (-1,0);
\draw[line width=0.25mm, -to] (0,0) -- (0,-1);  
\node at (1.2,0) {$u_1$};
\node at (0,1.2) {$u_2$};
\node at (-1.2,-1.2) {$u_3$};
\node at (-1.2,0) {$u_5$};
\node at (0,-1.2) {$u_4$};
\node at (0,-2) {$X_3=\tilde{X}_2=X_4=\tilde{X}_5$};
\end{scope}

\begin{scope}[shift = {(12,0)}]
\draw[line width=0.25mm, -to] (0,0) -- (1,0);
\draw[line width=0.25mm, -to] (0,0) -- (0,1);  
\draw[line width=0.25mm, -to] (0,0) -- (-1,0);
\draw[line width=0.25mm, -to] (0,0) -- (0,-1);  
\node at (1.2,0) {$u_1$};
\node at (0,1.2) {$u_2$};
\node at (-1.2,0) {$u_5$};
\node at (0,-1.2) {$u_4$};
\node at (0,-2) {$X_5=\mathbb{P}^1\times \mathbb{P}^1$};
\end{scope}
\end{tikzpicture}

As a toric variety, $X_1=\mathbb{P}^2$ is defined by the fan $\Sigma$ with ray generators $u_1 =e_1, u_1=e_2, u_3=-e_1-e_2$, where $e_1=(1,0)$ and $e_2=(0,1)$ are the standard basis vectors; see \cite[Example 12.4.2]{CLS}. According to the formula above its cohomology algebra is  
\[
A_1=H^{2\bullet}(X_1) \cong \frac{\mathbb{Q}[x_1,x_2,x_3 ]}{(x_1x_2x_3, x_1-x_3, x_2-x_3)}\cong  \frac{\mathbb{Q}[x]}{(x^3)}.
\]

The blow up $X_2=\tilde{X}_1$ of $X_1=\mathbb{P}^2$ at a point  is obtained by subdividing a cone of $\Sigma$ by adding the ray generated by $u_4=-e_2$. Then 
\begin{equation}
\label{eq:X1}
A_2=H^{2\bullet}(X_2)  \cong  \frac{\mathbb{Q}[x_1,x_2,x_3,x_4 ]}{(x_1x_3, x_2x_4, x_1-x_3, x_2-x_3-x_4)}=\frac{\Q[x_1,x_2]}{(x^2_1,x_2^2-x_2x_1)}.
\end{equation}

In terms of cohomological blow ups, take $T_1=\mathbb{Q}$ and let $\pi\colon A_1\rightarrow T_1$ be the natural projection, with Thom class $\tau_1=x^2$ and kernel $K_1=(x)$. The normal bundle of a point in $\P^2$ has total Chern class $c=1$, and hence $f_{T_1}(\xi)=\xi^2$ and $f_{A_1}(\xi)=\xi^2+x^2$, and hence the cohomological blowup of $A$ along $\pi_1$ is 
\begin{equation}
\label{eq:X1'}
\tilde{A}_1=\frac{\mathbb{Q}[x, \xi]}{(x^3, x\xi, \xi^2+ x^2)}= \frac{\mathbb{Q}[x, \xi]}{(x\xi, \xi^2+x^2)}\cong \frac{\Q[x_1,x_2]}{(x_1^2,x_2^2-x_1x_2)}=A_2
\end{equation}
where the last isomorphism sends $x\mapsto x_2$ and $\xi\mapsto x_1-x_2$.

Next define $X_3=\tilde{X}_2$ as the blow up of $X_2$ obtained by adding the ray generated by $u_5=-e_1$.  Then its cohomology algebra is given by  
\begin{align*}
A_3=H^{2\bullet}(X_3)  = & \frac{\mathbb{Q}[x_1,x_2,x_3,x_4, x_5 ]}{(x_1x_3, x_1x_5, x_2x_3, x_2x_4, x_4x_5, x_1-x_3-x_5, x_2-x_3-x_4)}\\
= & \frac{\Q[x_1,x_2,x_3]}{(x_1x_3, x_1^2,x_2x_3,x_2^2,x_3^2+x_1x_2)}.
\end{align*}
Again let $T_2=\Q$ and $\pi_2\colon A_2\rightarrow T_2$ be the canonical projection, with Thom class $\tau_2=x_1x_2$ and kernel $K=(x_1,x_2)$.  Then we have $f_{A_2}(\xi)=\xi^2+x_1x_2$ and the cohomological blow up of $A_2$ along $\pi_2$ is 
$$\tilde{A}_2=\frac{\Q[x_1,x_2,\xi]}{(x_1^2,x_2^2-x_2x_1,\xi x_1, \xi x_2, \xi^2+x_1x_2)}\cong \frac{\Q[x_1,x_2,x_3]}{(x_1x_3, x_1^2,x_2x_3,x_2^2,x_3^2+x_1x_2)}=A_3$$
where the last isomorphism sends $x_1\mapsto x_1$, $x_2\mapsto x_2-x_3$ and $\xi\mapsto x_3$. 

Finally let $X_5=\P^1\times \P^1$, defined by the fan $\Sigma'$ with ray generators $u_1=e_1$, $u_2=e_2$, $u_4=-e_2$, and $u_5=-e_1$, and let $X_4=\tilde{X}_5$ be the blow up at a point obtained by adding the ray to $\Sigma'$ $u_3=-e_1-e_2$.  Since the fans for $X_4$ and $X_3$ are identical, it follows that the toric varieties coincide as well.  Hence their cohomology algebras are 
$$A_5=H^{2\bullet}(\P^1\times\P^1)=\frac{\mathbb{Q}[x_1,x_2,x_4,x_5 ]}{(x_1x_5, x_2x_4, x_1-x_5, x_2-x_4)}=\frac{\Q[x_1,x_2]}{(x_1^2,x_2^2)}$$
and 
$$A_4=A_3=H^{2\bullet}(X_4)=H^{2\bullet}(X_3)=\frac{\Q[x_1,x_2,x_3]}{(x_1x_3, x_1^2,x_2x_3,x_2^2,x_3^2+x_1x_2)}\cong \frac{\Q[x_1,x_2,\xi]}{(x_1^2,x_2^2,x_1\xi, x_2\xi, \xi^2+x_1x_2)}=\tilde{A}_5$$
where the last isomorphism is the obvious $x_1\mapsto x_1$, $x_2\mapsto x_2$, and $x_3\mapsto \xi$.

It follows that the oriented graded AG algebras $A_1$, $A_2$, $A_3=A_4$, and $A_5$ are all birationally equivalent, corresponding to the (strong) factorization of the birational map
$$\xymatrixrowsep{.1pc}\xymatrix{& & X_3=X_4\ar[ddr]\ar[dl]\\
	& X_2 \ar[dl] & \\
	X_1=\P^2\ar@{-->}[rrr] &&& \P^1\times\P^1=X_5\\}.$$
\end{example}

\begin{remark} 
	\label{rem:toricsurj}
	The cohomology algebra of the blow up of a smooth toric variety $X$ along a smooth torus invariant subvariety $Y\subset X$ will always agree with the Constructions \ref{con:hat}, \ref{con:hatMD}, or \ref{con:ideal} of this paper since in that case the restriction map $\pi^*\colon H^{2\bullet}(X)\rightarrow H^{2\bullet}(Y)$ is always surjective.  Indeed in that case, the associated fan of $Y$ corresponds to a subfan of $X$, and the surjectivity follows from the combinatorial presentation of cohomology algebras as in \eqref{eq:tvcohom}.  The following examples shows what can happen in cases where that restriction map is not surjective.
\end{remark}

\begin{example}
	\label{ex:Segre1}
		Let $\pi\colon Y=\P^1\times\P^2\hookrightarrow \P^5=X$ be the Segre embedding.  Then we have a short exact sequence of vector bundles on $Y$:
		$$\xymatrix{0\ar[r] & \mathcal{T}_Y\ar[r] & \pi^*\mathcal{T}_X\ar[r] & \mathcal{N}_{Y/X}\ar[r] & 0}$$
		where $\mathcal{T}_Y$ is the tangent bundle of $Y$, $\pi^*\mathcal{T}_X$ is the restriction of the tangent bundle of $X$ to $\pi(Y)$ and $\mathcal{N}_{Y/X}$ is the normal bundle to $\pi(Y)\subset X$.  If we identity the cohomology algebras as the oriented graded AG algebras  
		$$A=H^{2\bullet}(X)\cong \frac{\Q[x]}{(x^6)}, \ \ T=H^{2\bullet}(\P^1\times\P^2)\cong \frac{\Q[y,z]}{(y^2,z^3)}$$
		where $x$, $y$, and $z$ are the classes of a hyperplane in $H^{2}(\P^5)$, and the factors $H^2(\P^1)$, and $H^2(\P^2)$ of $H^{2\bullet}(\P^1\times\P^2)\cong H^{2\bullet}(\P^1)\otimes_{\Q}H^{2\bullet}(\P^2)$, then the induced map $\pi^*\colon A\rightarrow T$ satisfies $\pi^*(x)=y+z$.  Note that $\pi^*$ is not surjective here.  From the Euler sequence we compute the total Chern classes $c(\pi^*\mathcal{T}_X)=(1+\pi^*(x))^6$ and $c(\mathcal{T}_Y)=(1+y)^2\cdot (1+z)^3$.  It follows from the Whitney product formula that the total Chern class for the normal bundle is 
		\begin{eqnarray*}
		c(\mathcal{N}_{Y/X})=\frac{c(\pi^*\mathcal{T}_X)}{c(\mathcal{T}_Y)}= & \frac{\left(1+y+z\right)^6}{(1+y)^2\cdot(1+z)^3} & =\left(1+y+z\right)^6\cdot \left(1-y\right)^2\cdot \left(1-z+z^2\right)^3\\
		& & =(6yz+3z^2)+(4y+3z)+1
	\end{eqnarray*}
		and hence the Chern classes are 
		$$\begin{cases}
		c_1(\mathcal{N}_{Y/X})= & 4y+3z\\
		c_2(\mathcal{N}_{Y/X})= 
		& 6yz+3z^2.\\
		\end{cases}$$		
		
		Hence if we blow up $X$ along $Y$ then, according to Equation \eqref{eq:HYtild}, the cohomology algebra of the exceptional divisor $\tilde{Y}$ is given by 
		$$\tilde{T}= \frac{\Q[y,z,\xi]}{(y^2,z^3,\xi^2-(4y+3z)\xi+(6yz+3z^2))}\cong H^{2\bullet}(\tilde{Y}).$$
		Moreover using the conditions of Theorem \ref{lem:buchar}, we can derive a presentation of the cohomology algebra of the blow up manifold $\tilde{X}$:
		\begin{equation}
		\label{eq:segre}
		\tilde{A}=\frac{\Q[x,\xi]}{\left(\xi^3-6x\xi^2+12x^2\xi-8x^3, 3\xi^4-9x\xi^3+6x^2\xi^2+4x^3\xi\right)}\cong H^{2\bullet}(\tilde{X}).
		\end{equation}
		where the restriction map $\tilde{\pi}\colon \tilde{A}\rightarrow \tilde{T}$ defined by $\tilde{\pi}(x)=y+z$ and $\tilde{\pi}(\xi)=\xi$ has Thom class $\tilde{\tau}=-\xi$, the blow up map $\beta\colon A\rightarrow \tilde{A}$ defined by $\beta(x)=x$ is injective with $\beta(a_{soc})=\tilde{a}_{soc}=x^5$, and the Hilbert function satisfies $H(\tilde{A})=H(A)+H(T)[1]=(1,2,3,3,2,1)$.  Furthermore, a Macaulay2 \cite{M2} calculation computes the Macaulay dual generator of $\tilde{A}$ as
		$$\tilde{F}=X^5-3X^3\Xi^2-10X^2\Xi^3-24X\Xi^4-48\Xi^5.$$
		Note that $\tilde{A}$ in \eqref{eq:segre} does not fit the model described by our Construction \ref{con:hat}; in particular the defining ideal of $\tilde{A}$ does not contain any monic polynomial of degree $n=2$. \end{example}

\begin{remark}
	\label{rem:nostdgrad}
	The algebra $\tilde{A}$ computed in Equation \eqref{eq:segre} might be termed a cohomological blow up along the \emph{non-surjective} map $\pi$.  In that case $A$ and $T$ are both standard graded, and $\tilde{A}$ is too, but this need not hold in general.  For example, if we blow up $X=\P^8$ along the Segre embedding of $Y=\P^2\times\P^2$, then again we have a non-surjective restriction map 
	$$\pi\colon A=H^{2\bullet}(X)=\Q[x]/(x^9)\rightarrow \Q[y,z]/(y^3,z^3)=H^{2\bullet}(Y)=T$$
	but the cohomology of the blow up of $X$ along $Y$, $\tilde{A}\cong H^{2\bullet}(\tilde{X})$ has Hilbert function 
	$$H(\tilde{A})=H(A)+H(T)[1]+H(T)[2]+H(T)[3]=(1, 2, 4, 7, 8, 7, 4, 2, 1),$$ 
	which implies that $\tilde{A}$ cannot be standard graded. 
\end{remark}
Motivated by these examples, we pose some problems for further research.
\begin{problem}
	\label{prob:birat}
	Generalize Example \ref{toricex} and find other algebras which are birationally equivalent to $A_0=\F[x]/(x^{d+1})$.  Can one classify them? 
\end{problem}

\begin{problem}
	\label{prob:nonsurjbu}
	Generalize Example \ref{ex:Segre1} and find a construction, similar in spirit to Construction \ref{con:hat}, for a cohomological blow up of an AG algebra $A$ along \emph{any} (i.e. possibly non-surjective) restriction map $\pi\colon A\rightarrow T$.  Does it have similar properties as the cohomological blow up along a surjective map, i.e. flat family, strong Lefschetz, connected sum, minimal generators?
\end{problem}

\begin{appendices}
\section{A Guide To Our Examples}
Given below is a list of the examples in this paper together with a brief description of the idea that example is attempting to illustrate.
\begin{enumerate}
	\item Example \ref{ex:notGor} shows that without further qualifications, Construction \ref{con:hat} can produce non-Gorenstein, Gorenstein, or boundary-Gorenstein algebras, i.e. non-Gorenstein algebras in the closure of the Gorenstein locus of the Hilbert scheme of that Hilbert function.

	\item Example \ref{ex:lambda} shows that over non-algebraically closed fields, distinct parameter values $\lambda$ can produce non-isomorphic cohomological blow up algebras.
	
	\item Example \ref{ex:NotPBI} shows that $\tilde{T}=R[\xi]/\Ann \left(h_R(\xi)\circ\left(\Xi^{d-1}\cdot G\right)\right)$ is not necessarily a free extension over $T=R/\Ann(G)$ for any choice of $h_R(\xi)\in R[\xi]$.  
	
	\item Example \ref{ex:chris} provides an algebra $\hat{A}_{MD}$ from Construction \ref{con:hatMD} which is not a cohomological blow up. 
	
	\item Example \ref{ex:blowup1} shows a cohomological blow up as a connected sum. 
	
	\item Example \ref{ex:13631} shows a cohomological blow down of Hilbert function $H(A)=(1,3,6,3,1)$ as a connected sum.  
	
	\item Example \ref{ex:nonst} shows that the cohomological blow up $\hat{A}$ may be standard graded even if $A$ is not.

	\item Example \ref{ex:cibu} gives a cohomological blow up that is a complete intersection.
	
	\item Example \ref{ex:zeroH} gives ideals $I$ and $(I:\tau)$ with homology groups $H$ and $H'$ equal to zero, but where the cohomogical blow up ideal $\hat{I}$ is not generated by a regular sequence. 
	
	\item Example \ref{ex:exact} gives examples of exact pairs of zero divisors. 
	
	\item Example \ref{ex:Tony} gives a CI with exact zero divisors which is not a BUG.  
	
	\item  Example \ref{ex:HF} shows a compressed AG algebra of socle degree $5$ and embedding dimension $3$ can be a cohomological blow up of a standard graded AG algebra.
	
	\item Example \ref{ex:failingSLP} shows if $A$ and $T$ have SLP over $\F_p$, then $\hat{A}$ may fail SLP.
	
	\item Example \ref{ex:Rodrigo} gives algebras in which $\hat{A}$ and $\hat{T}$ both have SLP, but the cohomological blow down $A$ does not have SLP.  
	
	\item Example \ref{8.7ex} shows that if $A$ and $T$ both have WLP (but fail SLP), then $\hat{A}$ may fail WLP.
	
	\item Example \ref{toricex} gives a geometric example of a strong factorization of a birational map between toric varieties which yields several birationally equivalent AG algebras, i.e. a sequence of AG algebras in which each one is either a cohomological blow up or blow down of the previous.
	
	\item Example \ref{ex:Segre1} computes a presentation of the blow up of $\P^5$ along the Segre embedding $\P^1\times\P^2\hookrightarrow \P^5$ in which case the restriction map on cohomology is not surjective. 

\end{enumerate}
\end{appendices}

\begin{ack} The authors are grateful to the series of annual Lefschetz Properties In Algebra and Combinatorics conferences and workshops, some of which we each participated in, beginning with G\"{o}ttingen (2015), where the fifth author proposed the Bold Conjecture \ref{boldconj}, and followed by
meetings at Mittag Leffler (2017),  Levico (2018), CIRM Luminy (2019), and Oberwolfach (2020).

The second author was 
partially supported by CIMA -- Centro de Investiga\c{c}\~{a}o em 
Matem\'{a}tica e Aplica\c{c}\~{o}es, Universidade de \'{E}vora, project 
UIDB/04674/2020 (Funda\c{c}\~{a}o para a Ci\^{e}ncia e Tecnologia).
The fourth author is supported by NSF grant DMS--2101225.

\end{ack}

\end{document}